\definecolor{darkblue}{rgb}{0.0, 0.0, 0.45}
\definecolor{lightblue}{rgb}{0.94, 0.97, 1.0}  % Changed from RGB to rgb model
\definecolor{lightblue2}{rgb}{0.68, 0.85, 0.9}
\definecolor{lightcyan}{rgb}{0.88, 1.0, 1.0}
\definecolor{palepink}{rgb}{0.98, 0.85, 0.87}
\definecolor{Mahogany}{rgb}{0.75, 0.25, 0.0} % <-- Added Mahogany definition
\definecolor{ForestGreen}{rgb}{0.13, 0.55, 0.13}  % If you want to redefine predefined colors like ForestGreen
\date{\today}
\def\@settitle{\begin{center}%
		\baselineskip14\p@\relax
		\normalfont\LARGE\scshape\bfseries
		%\uppercasenonmath\@title
		\@title
	\end{center}%
}
\def\@setauthors{%
  \begingroup
  \def\thanks{\protect\thanks@warning}%
  \trivlist
  \centering\footnotesize \@topsep30\p@\relax
  \advance\@topsep by -\baselineskip
  \item\relax
  \author@andify\authors
  \def\\{\protect\linebreak}%
%  \MakeUppercase{\authors}%
  \authors%
  \ifx\@empty\contribs
  \else
    ,\penalty-3 \space \@setcontribs
    \@closetoccontribs
  \fi
  \endtrivlist
  \endgroup
}
\def\subsection{\@startsection{subsection}{2}%
	\z@{.5\linespacing\@plus.7\linespacing}{.5\linespacing}%
	{\normalfont\large\bfseries}}
\def\subsubsection{\@startsection{subsubsection}{3}%
	\z@{.5\linespacing\@plus.7\linespacing}{.5\linespacing}%
	{\normalfont\itshape}}
\newtheorem{theorem}{Theorem}[section]
\newtheorem{lemma}[theorem]{Lemma}
\newtheorem{remark}[theorem]{Remark}
\newtheorem{proposition}[theorem]{Proposition}
\newtheorem{assumption}[theorem]{Assumption}
\newcommand{\R}{\mathbb{R}}
\newcommand{\PS}{\mathcal{P}_2}
\newcommand{\PO}{\mathsf{P}}
\newcommand{\SO}{\mathsf{S}}
\newcommand{\eps}{\varepsilon}
\DeclareMathOperator*{\argmin}{arg\,min}
\DeclareMathOperator{\diag}{diag}
\newcommand{\defeq}{\mathrel{\mathop:}=}
\title{Optimal Transportation by \\ Orthogonal Coupling Dynamics}
\author{Mohsen Sadr, Peyman Mohajerin Esfahani and Hossein Gorji}
\thanks{Corresponding author: Hossein Gorji. \\
Emails: \href{mailto:msadr@mit.edu}{\texttt{msadr@mit.edu}}, 
\href{mailto:p.mohajerinesfahani@utoronto.ca}{\texttt{p.mohajerinesfahani@utoronto.ca}}, 
\href{mailto:Mohammadhossein.Gorji@empa.ch}{\texttt{Mohammadhossein.Gorji@empa.ch}}.
Mohsen Sadr: Massachusetts Institute of Technology, Cambridge, USA, and Paul Scherrer Institute, Villigen, Switzerland. Peyman Mohajerin Esfahani: University of Toronto, Canada, and Delft University of Technology, the Netherlands. Hossein Gorji: Laboratory for Computational Engineering, Empa, D\"{u}bendorf, Switzerland.}
 \definecolor{ms}{rgb}{0.0, 0.0, 1.}
 \definecolor{hg}{rgb}{1., 0.0, 0.}
\begin{document}
\maketitle

\begin{abstract}
Many numerical and learning algorithms rely on the solution of the Monge-Kantorovich problem and Wasserstein distances, which provide appropriate distributional metrics. While the natural approach is to treat the problem as an infinite-dimensional linear programming, such a methodology limits the computational performance due to the polynomial scaling with respect to the sample size along with intensive memory requirements. We propose a novel alternative framework to address the Monge-Kantorovich problem based on a projection type gradient descent scheme. The dynamics builds on the notion of the conditional expectation, where the connection with the opinion dynamics is leveraged to devise efficient numerical schemes. We demonstrate that the resulting dynamics recovers random maps with favourable computational performance. Along with the theoretical insight, the proposed dynamics paves the way for innovative approaches to construct numerical schemes for computing optimal transport maps as well as Wasserstein distances. 
\end{abstract}

% REQUIRED
%\begin{keywords}
%Optimal Transport, McKean-Vlasov Process, Opinion Dynamics
%\end{keywords}

\section{Introduction}
\subsection{Background}
\noindent The importance of the optimal transport lies in its widespread application at different fronts of computational studies, along with its unique theoretical properties. It plays an essential role in machine-learning by offering relevant metrics for comparing different distributions \cite{arjovsky2017wasserstein,kuhn2019wasserstein,dakhmouche2024robust}. It has led to a significant progress in the theory of partial differential equations by its by-product, Wasserstein gradient flows \cite{ambrosio2008gradient,jordan1998variational,santambrogio2017euclidean}. 
At the same time, existing algorithms to compute such distances and corresponding maps remain expensive and complex.
Given the versatile role of optimal transport in different branches of machine-learning \cite{gordaliza2019obtaining,alvarez2018structured,arjovsky2017wasserstein}, analysis \cite{villani2009optimal,jordan1998variational,santambrogio2015optimal}, density functional theory \cite{buttazzo2012optimal,cotar2013density}, optimization \cite{mohajerin2018data}, and inference \cite{goldfeld2024statistical}, among others, several numerical approaches have been pursued for its efficient computations. Classical methods were developed based on linear programming \cite{bonneel2011displacement}, gradient descent \cite{chartrand2009gradient}, dynamic flow formulation \cite{benamou2000computational, trigila2016data}, or elliptic solvers \cite{saumier2015efficient}. For efficient computation and improved performance-scaling with respect to the dimension, entropy regularization \cite{cuturi2013sinkhorn,peyre2019computational} and its stochastic treatment \cite{ballu2020stochastic} have been pursued. Approximate approaches using moment formulation \cite{mula2024moment,sadr2024wasserstein} of Monge-Kantorovich problem have been proposed, as well.  
\\ \ \\
Despite these progresses, to the best of our knowledge, there is no concise Ordinary Differential Equation (ODE), in the sense of particle/agent based dynamics, which leads to the solution of the (un-regularized) Monge-Kantorovich problem. Such a model would be particularly valuable, beyond theoretical and conceptual contributions, for its potential to further enable efficient optimal transport computations.  In the opening chapter of Villani's seminal book \cite{villani2009optimal}, a list of coupling dynamics, of ODE or Stochastic Differential Equation (SDE) forms, are reviewed with the intention of creating  correlations between two given random variables. While interesting results can be obtained e.g. by the Knoth-Rosenblatt rearrangement \cite{el2012bayesian} or Moser's coupling \cite{brenier2003extended}, such schemes remain limited; e.g. due to  coordinate dependency of the Knoth-Rosenblatt rearrangement, or the constraint of small deviation between the marginals for Moser's coupling.
\\ \ \\
In principle, finding the optimal transport map between two probability spaces is a global optimization in the sense of linear programming on an infinite dimensional space. 
%{\color{ms}why infinite dimensional space? which space has infinite dimensions? The joint pdf and marginals have finite dimensions.} {\color{hg} HG: Since optimization is defined on the distribution, it has infinite number of unknowns (if we discretize the optimization problem on a fixed grid we need infinitely many points on the grid), that is why it is called infinite dimensional space. Maybe Peyman can confirm this?} {\color{ms}MS: Isn't that just a discretization or sampling error?} {\color{hg} HG: It is more of a topological point than the discretization (let's get Peyman's input here). The objective function $J(\pi)=\int c(x,y) \pi(dx,dy)$ is linear in $\pi$. But $J(\pi)$ (as a functional of $\pi$) belongs to an infinite dimensional space since we need infinitely many basis to describe it (it would have been an n-dimensional object if we considered $J(x_1,...,x_n)$, instead). Therefore optimizing $J(\pi)$ is referred to as the infinite dimensional linear programming}.  
We are confronted with the question whether there exist local dynamic rules that can lead to the optimal transportation between two sample spaces? 
An interesting class of dynamic processes is given by orthogonal dynamics. They naturally arise in projecting Hamiltonian systems onto a sub-space, e.g. by the use of conditional expectation \cite{chorin2000optimal,givon2005existence}. These processes have an interesting feature that the remainder/unresolved portion remains orthogonal to the current state of the resolved variable. 
\\ \ \\
Recent study of Conforti et al. \cite{conforti2023projected} leverages the projection method in the context of Langevin dynamics. Their proposed stochastic dynamics leads to the coupling which converges to the Sinkhorn regularization of the optimal transport problem \cite{cuturi2013sinkhorn}. Such a stochastic dynamics, which is closely linked to the Schrödinger Bridge \cite{gentil2020dynamical,leonard2013survey,chen2021stochastic}, gives rise to the Fokker-Planck equation, evolving the joint distribution. It remains to be addressed what happens to the introduced projected Langevin dynamics when the contribution of Brownian motion vanishes, relevant for the (un-regularized) Monge-Kantorovich problem. 
%{\color{ms}MS: Isn't the steady state solution of the Ito process without the Wiener part (which is an ODE) is the fixed points of the ODE? For example contraction mapping like $dX = - \frac{1}{2}X dt$, any initial $X$ will converge to the fixed point at zero.} {\color{hg} HG: Here we are referring to a special Ito process where you also have grad log of the density in the drift, see eq. 1.4 in the Conforti et al.}{\color{ms} MS: if the target density is normal distribution, then the drift computed as grad log of target density would be simply $-X$. Without the diffusion part, and $\Delta t<1$, the ODE would return zero (fixed point) as the stationary solution.}
\\ \ \\ 
The simplest form of the orthogonal dynamics can be constructed by projection of the gradient descent (with respect to the Monge-Kantorovich cost) onto a sub-space where marginals are preserved. In the absence of entropy regularization,  this dynamics becomes a good candidate to create optimal transport between two measure spaces. At the level of distribution, one expects such a dynamics gives rise to the Vlasov equation for the evolution of the joint.    
Despite the theoretical appeal of the coupling through projection, numerical treatment of the conditional expectation can become prohibitive. In general, conditional expectation is the solution of the Bayesian regression \cite{nadaraya1964estimating}. The Hegselmann-Krause model of opinion dynamics
 \cite{hegselmann2006truth, li2013consensus} offers non-parametric numerical approach to deal with such coupling dynamics. With these observations in hand and leveraging the recent study \cite{conforti2023projected}, we pursue a projected dynamic approach as a concise solution algorithm of the Monge-Kantorovich problem.   
\subsection{Main Contributions}
We propose a numerical scheme for the Monge-Kantorovich problem built on the following steps.
\begin{enumerate}
\item {\bf Coupling Dynamics.} We introduce the dynamics in Eq.~\eqref{eq:main-ODE} of \cref{sec:main-dynamics}, motivated by projection of the gradient descent on a marginal-preserving tangent space.
\item {\bf General Structural Properties.} In \cref{sec:struct-prop}, assuming that the well-posedness of the dynamics is granted, we justify several structural properties for the general cost setting.
\begin{enumerate}
    \item {\it Marginal Preservation.} We prove in  Proposition~\ref{prop:marg-pres} that the introduced dynamics preserves the marginals.
    \item {\it Descent in Cost.} The dynamics has a monotone behaviour in decreasing the cost, as shown in {Proposition~\ref{prop:pos}}.
    \item {\it Instability of Sub-optimal Couplings.} We find that the sub-optimal maps are unstable as proved in Proposition~\ref{prop:sub-opt}.
    \item {\it McKean-Vlasov System.} We derive the corresponding kinetic description of the introduced evolution in {Theorem}~\ref{theorem:vlasov}, where the kinetics takes the form of the Vlasov equation \eqref{eq:vlasov}. 
\end{enumerate}
\item {\bf Key Features in $L^2$-Monge-Kantorovich.} Next, we turn our attention to the common case of $L^2$-cost and derive two results in \cref{sec:key-L2}.
\begin{enumerate}
\item {\it Symmetric Positive Semi-Definite Correlation.} We observe that the dynamics keeps the centered cross-correlation between samples of the two measure spaces in the cone of symmetric positive semi-definite matrices, as proved in Proposition \ref{prop:spd}.
\item {\it Sharp Descent and Variational Formulation.} We derive the variational form of the dynamics, which according to Theorem \ref{theorem:sharp-descent} gives the sharpest descent in the $L^2$-Monge-Kantorovich cost among a class of marginal-preserving dynamics.
\end{enumerate}
\item 
{\bf Characterization in Gaussian Setting.} In \cref{sec:special_examples}, we study the Riccati dynamics governing the cross-correlation of Gaussian marginals under $L^2$-OCD and prove in Proposition 
\ref{prop:OCD-Gaussian}, that the algorithm converges to the true OT if the marginal covariances commute. We further justify that the convergence is exponentially fast in Theorem \ref{thm:expon-gauss}.
%\item {\bf Special Examples.} In %\cref{sec:special_examples}, two special cases of %linear maps and elliptic distributions, where %explicit results can be obtained, are discussed.
\item {\bf Numerical Algorithm.} In \cref{sec:numerics}, after motivating the analogy with opinion dynamics, a non-parametric recipe is proposed for computing the conditional expectation. 
\item{\bf Numerical Experiments.} We perform several numerical experiments to verify the convergence of the dynamics, as well as its relevance in practical settings, in \cref{sec:results}. We demonstrate that the presented dynamics, equipped with the devised numerical algorithm, successfully recover nonlinear Monge maps. Furthermore, we showcase its performance in distribution learning and color interpolation. 
%The devised dynamics is employed to construct efficient numerical algorithm in Sec. \ref{}.
\end{enumerate}
Our focus remains on basic structural properties of the dynamics along with its computational implications. We demonstrate the relevance of the orthogonal coupling dynamics and leave more thorough theoretical and numerical analysis of the dynamics for future studies.
%After discussing
%including preservation of marginals, recovery of Vlasov evolution equation, its steep descent in $L^2$-Monge-Kantorovich cost, and instability of the pathological/sub-optimal cases.  \\ \ \\
%We introduce a novel solution algorithm to construct transport maps between random spaces using the orthogonal coupling dynamic. In particular, we leverage the analogy with consensus dynamic models to construct efficient yet simple numerical recipe. The devised Monte-Carlo scheme does not require computation of gradients and lends itself to an efficient scaling with respect to the number of sample points as well as dimension of the distributional support.  \\ \ \\
%As a direct result of these steps, we demonstrate how the considered dynamic reaches optimal maps between two measure spaces, in a non-parametric fashion. \\ \ \\
%This property can be further leveraged to construct novel generative models.    \\ \ \\
\subsection{Notation and Setup}
\noindent Let $\PS(\Omega)$ be the set of Borel measures over $\Omega$ with a finite second moment, i.e.  $\PS (\Omega)\defeq \{\mu\in\mathcal{P}(\Omega): \int_{\Omega} \lVert x \rVert_2^2 \mu(dx)<\infty\} $, where $\lVert . \rVert_2^2$ is the usual $L^2$ Euclidean norm, and for $f\in L^2(\mu;\mathbb{R}^n)$ let
\begin{eqnarray}
\|f\|_{L^2(\mu)}
&\defeq&\left( \int_{\mathbb{R}^n} \|f(x)\|_2^2\,\mu(dx) \right)^{1/2}
\end{eqnarray} 
be the $L^2$ norm with respect to the measure $\mu$. We write $\mu\otimes\nu$ for the product measure on 
$\mathbb{R}^n\times\mathbb{R}^n$. 
For vectors $A,B\in\mathbb{R}^n$, the symbol $A\otimes B$ denotes the 
rank--one matrix $AB^{\top}$. \noindent Suppose $\Pi(\mu,\nu)$ is the set of all joint measures in $\PS(\R^{n}\times \R^{n})$ with marginals $\mu,\nu\in \PS(\R^n)$. 
We denote by $\pi^{\textrm{opt}}$ the solution of the Monge-Kantorovich problem
\begin{eqnarray}
\label{eq:pi_opt}
\pi^{\textrm{opt}}_{\mu,\nu}&\defeq&\argmin_{\pi\in\Pi(\mu,\nu)}\int_{\R^{n}\times \R^{n}} c(x,y)\pi(dx,dy)
\end{eqnarray}
with $c:\mathbb{R}^{2n}\to\mathbb{R}$ as a non-negative smooth function (see e.g. \cite{bogachev2012monge} for details).
For the standard setting of $c(x,y)=\frac{1}{2}\lVert x-y \rVert_2^2$,
the optimal map $\hat{T}:\R^{n}\to\R^{n}$ exists, if $\mu$ and $\nu$ are absolutely continuous with respect to the Lebesgue measure \cite{ambrosio2008gradient}.  It gives $\hat{T}\# \mu=\nu$ ($\#$ is the push-forward) and accordingly the 2-Wasserstein distance reads
\begin{eqnarray}
d^2(\mu,\nu)&=& \frac{1}{2} \int_{\R^{n}\times \R^{n}} \lVert x-y \rVert_2^2\pi^{\textrm{opt}}_{\mu,\nu}(dx,dy) \\
&=& \frac{1}{2} \int_{\R^{n}} \lVert \hat{T}(x)-x \rVert_2^2\mu(dx) \ .
\end{eqnarray}
%$\mathbb{E}[\lVert T(x)-x\rVert_2^2]=d^2(\mu,\nu)$, with $\mathbb{E}[.]$ being the expectation. \\ \ \\ %where $\textrm{id}$ is the identity map, and $\#$ is the push-forward. \\ \ \\
\sloppy Let $\mathcal{H}=\mathcal{L}^2(\Omega,\mathscr{A},P)$ be the Hilbert space comprised of square-integrable $\mathscr{A}$-measurable functions on $\Omega$. Suppose $X_t,Y_t:\Omega\to\R^n$ are random processes in $\mathcal{H}$ with the index $t\in[0,+\infty)$ and joint law $p_t$. We employ the conditional expectation $\mathbb{E}_{p_t}[X_t|Y_t]$ as the orthogonal projection of $X_t$ onto the span of $\sigma(Y_t)$-measurable functions in $\mathcal{H}$, denoted by $\PO_{{p_t},{Y_t}}[X_t]$, where $\sigma(A)$ is the smallest $\sigma$-algebra with respect to which the random variable $A$ is measurable (see e.g. \cite{chung2000course,borovkov1999probability} for further details). 
%Similarly, we employ $\mathbb{E}[Y_t|X_t]$ to denote the orthogonal projection of $Y_t$ onto the span of $\sigma(X_t)$-measurable functions in $\mathcal{H}$. Furthermore, we consider the complement of the projection operator $\mathcal{Q}_{\sigma(B)}[A]=(\textrm{id}-\PS_{\sigma(B)})[A]$. 
%{\color{blue} option 2: Let $(\Omega,\mathscr{A},P)$ be the probability space, where all random variables in the manuscript belong to. Suppose $X_t,Y_t:\Omega\to\R^n$ are random processes with the index $t\in[0,+\infty)$, and that they belong to the Hilbert measure spaces $\mathcal{L}^2(\Omega,\{\mathscr{X}_t\}_{t\ge0},\mathbb{P})$ and $\mathcal{L}^2(\Omega,\{\mathscr{Y}_t\}_{t\ge0},\mathbb{Q})$, respectively, where $\{\mathscr{X}_t\}_{t\ge0},\{\mathscr{Y}_t\}_{t\ge0}\subseteq \mathscr{A}$ are strictly increasing $\sigma$-algebras.  The conditional expectation $\mathbb{E}[X_t|Y_t]$ is shorthand of the orthogonal projection of $X_t$ onto the span of $\sigma(Y_t)$-measurable functions, denoted as $\PS_{\sigma(Y_t)}[X_t]$, where $\sigma(.)$ is the smallest $\sigma$-algebra with respect to which the random variable is measurable. Similarly, we employ $\mathbb{E}[Y_t|X_t]$ to denote the orthogonal projection of $Y_t$ onto the span of $\sigma(X_t)$-measurable functions. Furthermore we consider the complement of the projection operator $\mathcal{Q}_{\sigma(B)}[A]=(\textrm{id}-\PS_{\sigma(B)})[A]$.} \\ \ \\ L
%Let $p_t$ be the joint law of $X_t$ and $Y_t$.
%, i.e. $\zeta_t=\textrm{law}(X_t,Y_t)$, and suppose it admits a density $h_t$. 
Accordingly, we have
\begin{eqnarray}
\PO_{p_t,X_t}[Y_t]=\mathbb{E}_{p_t}[Y_t|X_t] \ \ \ \textrm{and} \ \ \ \PO_{p_t,Y_t}[X_t]=\mathbb{E}_{p_t}[X_t|Y_t] \ .
\end{eqnarray}
%as well as
%\begin{eqnarray}
%\PS_{p_t,X_t=x}[Y_t]&=&\frac{1}{\int_{\R^n}p_t(x,dy)}\int_{\R^n}yp_t(x,dy) \\
%\textrm{and} \ \ \ \ \PS_{p_t,Y_t=y}[X_t]&=&\frac{1}{\int_{\R^n}p_t(x,y)dy}\int_{\R^n}xh_t(x,y)dx \ ,
%\end{eqnarray}
%as long as the denominators are non-zero. 
%Let $X_0$ and $Y_0$ be independent random variables with laws $\mu$ and $\nu$, respectively. It is clear that $Y_0=\hat{T}(X_0)$ ($\nu$-a.e.), and respectively $X_0={\hat{T}}^{-1}(Y_0)$  ($\mu$-a.e.), where $\hat{T}^{-1}$ is the inverse of $\hat{T}$ in the sense that ${\hat{T}}^{-1}\circ \hat{T}(X_0)=X_0$ ($\mu$-a.e.) and ${\hat{T}}\circ \hat{T}^{-1}(Y_0)=Y_0$ ($\nu$-a.e.). 
%\\ \ \\
%\paragraph{\bf {More Technical Notation.}} %Consider the initial density, $p_0$, to be the density of the product measure of $\mu$ and $\nu$. 
We consider the probability flows following the continuity equation of the form  
\begin{eqnarray}
\label{eq:p-flow}
\partial_tp_t+\nabla \cdot (vp_t)&=&0 \ ,
%\nabla_x\cdot(v[p_t]p_t)+\nabla_y\cdot(w[p_t]p_t)&=&0 \,
\end{eqnarray}
initialized by $p_0\in \Pi(\mu,\nu)$, 
%of a time varying measure $\pi_t$ in $\PS(\mathcal{X}\times\mathcal{X})$,
where the velocity vector $v=(v_1,v_2)$ is a functional of $p_t$ and belongs to the tangent space \cite{ambrosio2007gradient}
\begin{eqnarray}
\textrm{Tan}_{p_t} \mathcal{P}_2(\mathbb{R}^{2n})&\defeq &\overline{\left\{\nabla h : h \in C_c^\infty(\mathbb{R}^{2n})\right\}}^{L^2(p_t;\mathbb{R}^{2n})} \ .
\end{eqnarray}
Note that $\textrm{Tan}_{p_t} \mathcal{P}_2(\mathbb{R}^{2n})$ is a Hilbert space comprised of gradient vector fields. 
%{\color{ms}MS: rhs of 6 is vague to me (if it is a definition, should be with :=). This is the space of densities $p_t$ that $\int (\nabla h) p_t dx dy<\infty$ for all $h\in C^\infty$ with compact support in $\mathbb{R}^{2n}$?}{\color{red}  HG: You are absolutely right, it is definition. It is classic definition of tangent space of probability flows. But what it entails is a bit different. It is not space of densities. It is space of velocities $v$ through which you can transform the densities (in the sense of equation (5)). Note that equation (5) only has a meaning in the weak sense (it is not equation of density but of measure) and shows how $v$ determines $\dot{p_t}$. Now these velocities should be gradients but in $L^2(p_t)$. The latter means that $\mathbb{E}_{p_t}[|v|^2]$ exists (bounded). For example if $p_t$ is Gaussian, $h$ can be any polynomial but e.g. not exponential with a large power(then it wont be in the $L^2$ weighted by $p_t$)}
Besides $\textrm{Tan}_{p_t} \mathcal{P}_2(\mathbb{R}^{2n})$, we specifically consider two tangent spaces $\textrm{Tan}_{p_t,1}\Pi(\mu,\nu)$ and $\textrm{Tan}_{p_t,2}\Pi(\mu,\nu)$. The former, following \cite{conforti2023projected}, reads
\begin{eqnarray}
\label{eq:tan-pres}
\textrm{Tan}_{p_t,1}\Pi(\mu,\nu)
\defeq\left\{v\in \textrm{Tan}_{p_t} \mathcal{P}_2(\mathbb{R}^{2n}) : \int_{\mathbb{R}^{2n}} v \cdot \begin{pmatrix} \nabla h_1 \\ \nabla h_2 \end{pmatrix}  dp_t=0, \forall h_{1,2} \in C_c^\infty (\mathbb{R}^n)\right\} \ , \nonumber \\
\end{eqnarray}
which gives the set of all velocities that preserve the marginals $\mu$ and $\nu$. Following notation of \cite{conforti2023projected}, the latter tangent space is given by
\begin{eqnarray}
\label{eq:tan-simple}
\textrm{Tan}_{p_t,2}\Pi(\mu,\nu)&\defeq &\left\{v\in \textrm{Tan}_{p_t} \mathcal{P}_2(\mathbb{R}^{2n}) : \begin{pmatrix} \mathbb{E}_{p_t}[v_1(X,Y)|X] \\
\mathbb{E}_{p_t}[v_2(X,Y)|Y]
\end{pmatrix} =0, \ a.s.
\right\},
\end{eqnarray}
\noindent which is a sub-space of $\textrm{Tan}_{p_t,1}\Pi(\mu,\nu)$. We denote it by $\ker(\PO_{p_t,X})\times\ker( \PO_{p_t,Y})$ for simplicity and as shown in Proposition \ref{prop:marg-pres}, it preserves the marginals $\mu$ and $\nu$. Since for $v\in\textrm{Tan}_{p_t,2}\Pi(\mu,\nu)$, we have $\mathbb{E}_{p_t}[v_1(X_t,Y_t)|X_t]=\mathbb{E}_{p_t}[v_2(X_t,Y_t)|Y_t]=0$,  the projection operator  $\SO_{p_t}: \textrm{Tan}_{p_t} \mathcal{P}_2(\mathbb{R}^{2n})\to \textrm{Tan}_{p_t,2}\Pi(\mu,\nu)$ 
%plays a crucial role in developing the presented methodology. Since $\PO_{p_t}$ is the projection onto $L^2(p_t)$, its complement is projection onto ${L^2(p_t)}^\perp$. Therefore $\SO_{p_t}$ 
is orthogonal to $\PO_{p_t}$, i.e.
\begin{eqnarray}
\SO_{p_t,X_t}[v_1(X_t,Y_t)]&=&v_1(X_t,Y_t)-\mathbb{E}_{p_t}[v_1(X_t,Y_t)|X_t] \\ 
\textrm{and} \ \ \ \SO_{p_t,Y_t}[v_2(X_t,Y_t)]&=&v_2(X_t,Y_t)-\mathbb{E}_{p_t}[v_2(X_t,Y_t)|Y_t] \ ,
\end{eqnarray}
for $v\in \textrm{Tan}_{p_t}\mathcal{P}_2(\mathbb{R}^{2n})$. 

\section{Orthogonal Coupling} % 2 pages
\subsection {Main Dynamics}
\label{sec:main-dynamics}
\noindent Starting with independent samples of $X_0\sim \mu$ and $Y_0\sim \nu$, we would like to construct a micro-dynamics in the sense of an ODE on $X_t$ and $Y_t$, 
which converges to the solution of \eqref{eq:pi_opt}. In other words, we look for a velocity vector $v$ which updates $X_t$ and $Y_t$ in a way that the minimum cost $c(X_t,Y_t)$, in the average sense, is attained as $t\to \infty$. One can consider the conventional gradient descent for $X_t$ and $Y_t$ with respect to $c$, suggesting $v_1^{\textrm{gd}}=-\nabla_x c$ and $v_2^{\textrm{gd}}=-\nabla_y c$, and updating $X_t$ and $Y_t$ according to $v_1^{\textrm{gd}}$ and $v_2^{\textrm{gd}}$, respectively. However, it is easy to spot that the distributions of $X_t$ and $Y_t$ would go out of the set of couplings $\Pi (\mu,\nu)$. \\ \ \\
An interesting approach to keep the marginals in $\Pi (\mu,\nu)$ is to project the dynamics of the gradient descent onto the tangent space which preserves the marginals. In particular, if we consider the probability flow given by
\begin{eqnarray}
\partial_t p_t+\nabla\cdot(\hat{v}p_t)&=&0 \ ,
\end{eqnarray}
and restrict the velocities to those $\hat{v}=(\hat{v}_1,\hat{v}_2)\in \textrm{Tan}_{p_t} \PS(\mathbb{R}^{2n})$ which fulfill 
\begin{eqnarray}
\int_{\mathbb{R}^{2n}} \left(\hat{v}_1 \cdot \nabla h_1(x)+\hat{v}_2\cdot \nabla h_2(y)\right) p_t(dx,dy)&=&0 \, \ \ \ \forall h_{1,2}\in C^{\infty}(\mathbb{R}^n),
\end{eqnarray}
the marginals of $p_t$ remain unchanged.
However, the general form of this tangent space, given by $\textrm{Tan}_{p_t,1} \Pi(\mu,\nu)$ in Eq.~\eqref{eq:tan-pres}, does not provide us with a concise and convenient projection (this avenue was explored, to some degree, in \cite{angenent2003minimizing}). Yet, if instead of $\textrm{Tan}_{p_t,1} \Pi(\mu,\nu)$, we focus on a sub-manifold, which still preserves the marginals, the problem can become tractable. In particular, the tangent space $\textrm{Tan}_{p_t,2} \Pi(\mu,\nu)=\ker(\PO_{p_t,X})\times\ker( \PO_{p_t,Y})$ offers a straight-forward projection, while preserving the marginals. Noting that the Riemannian gradient is the orthogonal projection of the Euclidean one onto the tangent space \cite{boumal2023introduction}, we get the projected version of $v^{\textrm{gd}}$, as shown in Fig. \ref{fig:pgd}.
\begin{figure}
  	\centering
\includegraphics[scale=0.03]{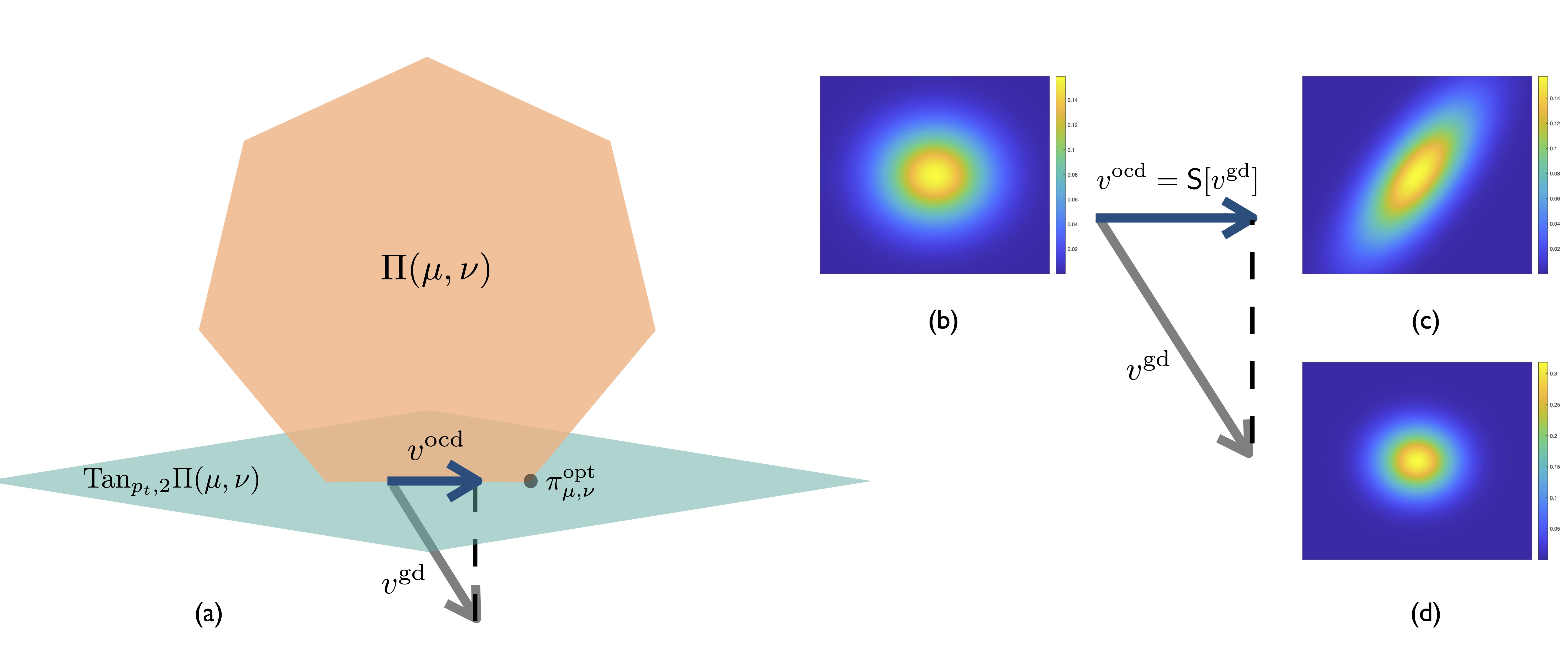}
\caption{Geometric illustration of OCD; (a) The gradient descent, which leaves the polytope $\Pi(\mu,\nu)$, is projected onto the tangent space $\ker(\PO_{p_t,X})\times\ker(\PO_{p_t,Y})$; (b) Initial condition as product Gaussian probability density; (c) Evolved by $v^{\textrm{ocd}}$, the initial density is updated by projected gradient descent in $L^2$-cost; (d) Evolved by $v^{\textrm{gd}}$, the initial density is updated by gradient descent in $L^2$-cost, leaving $\Pi(\mu,\nu)$.  }
   \label{fig:pgd}

\end{figure}
In fact, projecting $v^{gd}$ onto $\textrm{Tan}_{p_t,2} \Pi(\mu,\nu)$ gives us the update rule 
\begin{eqnarray}
\dot{X}_t=v_1^{\textrm{ocd}}\defeq-\nabla_x c(X_t,Y_t)+\mathbb{E}_{p_t}[\nabla_x c(X_t,Y_t)|X_t] \nonumber \\
\textrm{and} \ \ \ \ \dot{Y}_t=v_2^{\textrm{ocd}}\defeq-\nabla_y c(X_t,Y_t)+\mathbb{E}_{p_t}[\nabla_y c(X_t,Y_t)|Y_t] \,
\label{eq:main-ODE}
\end{eqnarray}
which we refer to as the Orthogonal Coupling Dynamics (OCD). The orthogonality comes from the observation that $\mathbb{E}[v_1^{\textrm{ocd}}\cdot X_t]=\mathbb{E}[v_2^{\textrm{ocd}}\cdot Y_t]=0$. In general, there is no guarantee that OCD reaches global minimum of \eqref{eq:pi_opt}, as the projection onto the sub-manifold $\textrm{Tan}_{p_t,2}\Pi(\mu,\nu)$ can prevent the dynamics from visiting all possible couplings between $\mu$ and $\nu$. This can be readily noticed, e.g. if $Y_0$ is initialized as a sub-optimal map of $X_0$. As a result, the dynamics remains frozen at a sub-optimal fixed point. However, as we will discuss, there are several nuances in \eqref{eq:main-ODE}, which warrant investigation of OCD on both theoretical and numerical fronts. In particular, we observe that OCD ensures the cost is non-increasing. This result plays a crucial role in two important properties that we deduce: we observe that sub-optimal couplings turn out to be unstable (see Proposition \ref{prop:sub-opt}), and that OCD gives the sharpest decay in $L^2$-cost among processes with velocity in $\textrm{Tan}_{p_t,2} \Pi (\mu,\nu)$ (see Theorem \ref{theorem:sharp-descent}). In practice, the dynamics reaches accurate estimations of the optimization problem \eqref{eq:pi_opt}, as we demonstrate in results section. \\ \ \\
From conceptual point of view, by replacing the optimization problem  \eqref{eq:pi_opt} with the OCD system \eqref{eq:main-ODE}, we turn the Monge-Kantorovich problem into a series of regression problems, implied by the conditional expectation. While the original problem, given by Eq.~\eqref{eq:pi_opt}, is a constrained optimization over distributions, the OCD entails an unconstrained optimization over functions, in $L^2$. This transformation allows us to leverage tools of conditional expectation estimation in order to achieve efficient computation of optimal transport in contrast to the original constrained infinite dimensional linear programming problem. \\ \ \\
However, before analyzing OCD, let us mention that alternatively, one can consider the Langevin dynamics 
\begin{eqnarray}
\label{eq:SDE}
d{X}_t&=&\left(-\nabla_x c+\mathbb{E}_{p_t}[\nabla_x c|X_t]+\epsilon \nabla_x \log f_\mu(X_t)\right)dt+\sqrt{2\epsilon} dW_t \nonumber \\
\textrm{and} \ \ \ \ d{Y}_t&=&\left(-\nabla_y c+\mathbb{E}_{p_t}[\nabla_y c|Y_t]+\epsilon \nabla_y \log f_\nu(Y_t)\right)dt+\sqrt{2\epsilon} dB_t \ ,
\end{eqnarray}
where $\epsilon >0$ is the regularization parameter; $W_t,B_t$ are independent Wiener processes with values in $\mathbb{R}^n$; and $f_{\mu,\nu}$ are densities of the measures $\mu,\nu$. Assuming log-concave densities, Conforti et al \cite{conforti2023projected}
 have shown that the measure generated by the Langevin dynamics \eqref{eq:SDE} converges to the solution of \eqref{eq:pi_opt}, once the latter is regularized by entropy with parameter $\epsilon$. Despite the theoretical grounding of SDE \eqref{eq:SDE}, we have three main motivations behind focusing on OCD \eqref{eq:main-ODE}, instead:
 \begin{enumerate}
 \item While the SDE system gives us an approximation of the optimal joint density, it would not give us a map between $X_t$ and $Y_t$, due to the presence of the Brownian motion. Therefore, for applications where the random map is of interest, the SDE may not be employed; at least without further treatments.
 \item The densities of the measures may not be known in many relevant applications. We are typically left only with samples of $\mu$ and $\nu$. Therefore the evaluation of the SDE dynamics requires further regularizations. 
 \item From the numerical aspect,  computing the grad-log terms of the density, even if the latter is given, can be significantly costly if $n$ is large in $\mathbb{R}^n$. %{\color{ms}MS: If grad-log of f is given, why is it expensive? I'm not following.}{\color{red} HG: I meant if the density is given (the latter refers to density). Since in point (2) we are saying that the density may not be given now in point (3) we say even if the density is given...}
 \end{enumerate}
The open question remains under which technical assumptions the case of $\epsilon=0$, which is OCD \eqref{eq:main-ODE}, would converge to the solution of the Monge-Kantorovich problem. While we do not directly address this problem and despite this knowledge gap, we present new results on both theoretical and numerical aspects of OCD, presenting its basic features along with computational relevance for practical optimal transport problems. 

\subsection{General Structural Properties}
\label{sec:struct-prop}
\noindent The OCD dynamics \eqref{eq:main-ODE} leads to a certain favourable theoretical features. These are rooted in the fact that the conditional expectation is a projection operator in $L^2$ with self-adjoint and contraction characteristics. We leverage these characteristics to derive theoretical properties of the OCD. We provide an overview of the main theoretical results below, where the proofs are given in \cref{sec:proof}. 
\begin{enumerate}
\item {\bf Marginal Preservation.} Since the velocities $v_{1,2}^{\textrm{ocd}}$, introduced in Eq.~\eqref{eq:main-ODE}, are orthogonal with respect to the functions of $X_t$ and $Y_t$, respectively, a direct computation shows that the distributions of $X_t$ and $Y_t$ remain unchanged. More precisely we have the following result.
\begin{proposition}
\label{prop:marg-pres}
Consider $(X_t,Y_t)$ to be the solution of the ODE system
\begin{eqnarray}
\label{eq:gen-ODE}
\dot{X}_t=v_1 \ \ \ \textrm{and} \ \ \ \dot{Y}_t=v_2
\end{eqnarray}
 with velocities $v=(v_1,v_2)\in \textrm{Tan}_{p_t,2} \Pi(\mu,\nu)$, and the initial condition $(X_0,Y_0)\sim \pi$, where $\pi\in\Pi(\mu,\nu)$ and $p_t$ is their joint law. Under the regularity assumptions stated in 
Assumption~\ref{ass:regularity}, $p_t\in \Pi(\mu,\nu)$.
\end{proposition}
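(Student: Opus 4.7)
The plan is to verify marginal preservation by examining the continuity equation \eqref{eq:p-flow} satisfied by $p_t$ and integrating out one variable at a time. Since $(X_t,Y_t)$ evolves by the ODE system \eqref{eq:gen-ODE} with velocity field $v=(v_1,v_2)$, the joint law $p_t$ satisfies $\partial_t p_t + \nabla_x\cdot(v_1 p_t) + \nabla_y\cdot(v_2 p_t) = 0$. Denote by $\mu_t$ and $\nu_t$ the first and second marginals of $p_t$ respectively, with initial values $\mu_0=\mu$, $\nu_0=\nu$. The goal is to show $\partial_t \mu_t = 0$ and $\partial_t \nu_t = 0$.

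First I would integrate the continuity equation against a test function $h_1\in C_c^\infty(\mathbb{R}^n)$ depending only on $x$. After integration by parts, the $y$-divergence term drops out (either by Fubini and the fact that $h_1$ does not depend on $y$, or more directly since $\nabla_y h_1 = 0$), and one is left with
\begin{eqnarray*}
\frac{d}{dt}\int_{\mathbb{R}^n} h_1(x)\,\mu_t(dx) &=& \int_{\mathbb{R}^{2n}} \nabla h_1(x)\cdot v_1(x,y)\, p_t(dx,dy).
\end{eqnarray*}
By the definition of $\textrm{Tan}_{p_t,2}\Pi(\mu,\nu)$ in \eqref{eq:tan-simple}, the characterization recalled in the paragraph following \eqref{eq:tan-simple} gives $\mathbb{E}_{p_t}[v_1(X_t,Y_t)\mid X_t] = 0$. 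Conditioning on $X_t$ and using the tower property shows that the right-hand side equals
\begin{eqnarray*}
\int_{\mathbb{R}^n} \nabla h_1(x)\cdot \mathbb{E}_{p_t}[v_1(X_t,Y_t)\mid X_t=x]\,\mu_t(dx) = 0.
\end{eqnarray*}
Hence $\int h_1\,d\mu_t$ is constant in $t$ for every $h_1\in C_c^\infty(\mathbb{R}^n)$, which forces $\mu_t=\mu$ for all $t\ge 0$. An entirely symmetric argument, using a test function $h_2(y)$ and the orthogonality $\mathbb{E}_{p_t}[v_2(X_t,Y_t)\mid Y_t]=0$, yields $\nu_t=\nu$. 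Combining both gives $p_t\in\Pi(\mu,\nu)$.

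The argument is essentially a direct consequence of the characterization of $\textrm{Tan}_{p_t,2}\Pi(\mu,\nu)$ through vanishing conditional expectations, so I do not anticipate a conceptual obstacle. The only technical point that deserves care is justifying the interchange of differentiation in $t$ and integration in $x,y$, and the integration by parts: both are standard under the implicit regularity and integrability assumptions (finite second moments, $v_i\in L^2(p_t;\mathbb{R}^n)$) built into the tangent-space setup, together with the compact support of the test functions $h_1,h_2$. Once those are in place, the orthogonality relation does all the work.
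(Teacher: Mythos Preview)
Your proposal is correct and follows essentially the same route as the paper: both test the time derivative of the marginal against a function of one variable, reduce to $\int \nabla h_1(x)\cdot v_1\,dp_t$, and kill this term using $\mathbb{E}_{p_t}[v_1\mid X_t]=0$ (you via the tower property, the paper via the self-adjoint property of conditional expectation, which is the same identity). The only cosmetic difference is that you start from the continuity equation for $p_t$ and integrate by parts, whereas the paper differentiates $\mathbb{E}[g(X_t)]$ directly along the ODE; these are equivalent viewpoints.
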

\item {\bf Descent in Cost.} As a result of the conditional expectation contraction property, the OCD leads to a monotone decay in the cost. 
More specifically, we have the following result.
\begin{proposition}{
\label{prop:pos}
Consider $(X_t,Y_t)$ to be the solution of OCD \eqref{eq:main-ODE}  with the initial condition $(X_0,Y_0)\sim \pi$, from an arbitrary coupling $\pi \in \Pi(\mu,\nu)$. Under the regularity assumptions stated in 
Assumption~\ref{ass:regularity}, we have %The correlation matrix generated by $(X_t,Y_t)$ fulfills
\begin{eqnarray}
\label{eq:corr-pos}
\frac{d}{d t}\mathbb{E}[c(X_t,Y_t)] \le 0 \ .
\end{eqnarray}
}
\end{proposition}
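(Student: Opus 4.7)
The plan is to compute $\tfrac{d}{dt}\mathbb{E}[c(X_t,Y_t)]$ directly, substitute the OCD velocities, and then exploit the fact that $\mathbb{E}_{p_t}[\cdot\mid X_t]$ and $\mathbb{E}_{p_t}[\cdot\mid Y_t]$ are orthogonal projections in $L^2(p_t)$.

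First, assuming enough regularity to differentiate under the expectation (which is implicitly granted by the well-posedness assumption), the chain rule gives
\begin{equation*}
\frac{d}{dt}\mathbb{E}[c(X_t,Y_t)] \;=\; \mathbb{E}\!\left[\nabla_x c(X_t,Y_t)\cdot \dot X_t + \nabla_y c(X_t,Y_t)\cdot \dot Y_t\right].
\end{equation*}
Substituting the OCD velocities from \eqref{eq:main-ODE} splits the right-hand side into an $x$-part and a $y$-part, each of the form
\begin{equation*}
-\mathbb{E}\!\left[\lVert\nabla_x c\rVert_2^2\right] + \mathbb{E}\!\left[\nabla_x c \cdot \mathbb{E}_{p_t}[\nabla_x c \mid X_t]\right],
\end{equation*}
and symmetrically for $y$.

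Next, I would invoke the tower property (equivalently, the self-adjointness of $\PO_{p_t,X_t}$): since $\mathbb{E}_{p_t}[\nabla_x c\mid X_t]$ is $\sigma(X_t)$-measurable,
\begin{equation*}
\mathbb{E}\!\left[\nabla_x c \cdot \mathbb{E}_{p_t}[\nabla_x c\mid X_t]\right] \;=\; \mathbb{E}\!\left[\lVert\mathbb{E}_{p_t}[\nabla_x c\mid X_t]\rVert_2^2\right].
\end{equation*}
Combining with the preceding display and using the Pythagorean identity $\mathbb{E}[\lVert Z\rVert_2^2] = \mathbb{E}[\lVert \PO Z\rVert_2^2] + \mathbb{E}[\lVert Z-\PO Z\rVert_2^2]$ for an orthogonal projection $\PO$, I obtain
\begin{equation*}
\frac{d}{dt}\mathbb{E}[c(X_t,Y_t)] \;=\; -\mathbb{E}\!\left[\lVert \SO_{p_t,X_t}[\nabla_x c]\rVert_2^2\right] - \mathbb{E}\!\left[\lVert \SO_{p_t,Y_t}[\nabla_y c]\rVert_2^2\right] \;\le\; 0,
\end{equation*}
which is the claim. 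As a bonus, this identity also pinpoints the equality case: the cost is stationary precisely when $\nabla_x c$ is a function of $X_t$ alone and $\nabla_y c$ is a function of $Y_t$ alone $p_t$-a.s., a characterization that will be useful for Proposition~\ref{prop:sub-opt}.

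The only genuinely delicate step is the interchange of differentiation and expectation in the opening line; everything else is a one-line application of the projection structure of conditional expectation. Since the proposition is stated under the standing assumption that the OCD is well-posed (and $c$ is smooth with $\mu,\nu$ having second moments), this interchange is harmless, so I do not expect a real obstacle here — the argument is essentially a variational identity masquerading as an inequality.
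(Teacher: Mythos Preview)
Your proof is correct and follows essentially the same route as the paper: differentiate under the expectation, substitute the OCD velocities, and use the projection structure of conditional expectation (the paper phrases this last step as the ``contraction property'' $\mathbb{E}[\lVert A\rVert_2^2]\ge\mathbb{E}[\lVert\mathbb{E}[A\mid B]\rVert_2^2]$, whereas you invoke the tower/Pythagorean identity to get the sharper equality $\tfrac{d}{dt}\mathbb{E}[c]=-\mathbb{E}\lVert\SO_{p_t,X_t}[\nabla_x c]\rVert_2^2-\mathbb{E}\lVert\SO_{p_t,Y_t}[\nabla_y c]\rVert_2^2$). Your version is in fact slightly more informative, since it identifies the exact decay rate and the equality case.
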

\noindent In other words, the OCD \eqref{eq:main-ODE} monotonically reduces the cost $\mathbb{E}[c(X_t,Y_t)]$ until the latter reaches its  stationary value.  This property is crucial to show that the sub-optimal fixed points of the dynamics are unstable (as discussed in the following). %Note that whether or not the stationary value of $\mathbb{E}[c]$ implies reaching a fixed point of the OCD dynamic is not investigated here.  
%\begin{comment}
\item {\bf Instability of Sub-Optimal Couplings.} The monotonic decay in the cost allows us to investigate stability of sub-optimal couplings. In particular, we have that the solution $(X_t,Y_t)$ of OCD with a measure from the sub-optimal coupling set
\begin{eqnarray}
\Pi^*(\mu,\nu)\defeq \left\{\pi\in \Pi(\mu,\nu):  \int_{\mathbb{R}^{2n}}c(x,y)\pi(dx,dy)>\int_{\mathbb{R}^{2n}}c(x,y)\pi^{\textrm{opt}}(dx,dy)\right\} \nonumber
\end{eqnarray}
 is unstable, in the sense described below.
 \begin{proposition}\label{prop:sub-opt}{ Consider $(X_t,Y_t)$ to be the solution of the ODE system \eqref{eq:main-ODE}  with the initial condition $(X_0,Y_0)\sim \pi$, from an arbitrary coupling $\pi \in \Pi(\mu,\nu)$. Suppose that the joint measure of $(X_t,Y_t)$ fulfills
\begin{eqnarray}
p_T\in \Pi^*(\mu,\nu) 
\end{eqnarray}
at some $T>0$.
 Under the regularity assumptions stated in 
Assumption~\ref{ass:regularity}, the coupling $p_T$ is unstable in the sense that if the law is perturbed by
\begin{eqnarray}
p_T^\epsilon&=&(1-\epsilon)p_T+\epsilon \pi^{\textrm{opt}}_{\mu,\nu} ,
\end{eqnarray}
where $0<\epsilon<1$, the solution $(X_t,Y_t)$ would not have the measure $p_T$ for any time later. 

}
\end{proposition}
%\end{comment}
\item {\bf McKean-Vlasov System.}
The OCD \eqref{eq:main-ODE} can be equivalently interpreted as a probability flow in the sense of Eq.~\eqref{eq:p-flow}. Due to the absence of Brownian motion, this probability flow is in the form of the Vlasov equation. The latter arises in the mean field limit of interacting particles, e.g. see \cite{golse2003mean,paul2022microscopic}. By exploiting the projection operator $\SO$, the corresponding probability density evolution takes the concise form
\begin{eqnarray}
\label{eq:vlasov}
\partial_t \rho_t+\nabla_x \cdot \left(\rho_t\SO_{p_t,x}[\nabla_x c]\right)+\nabla_y \cdot \left(\rho_t\SO_{p_t,y}[\nabla_y c]\right)&=&0 \ ,
%\frac{\partial}{\partial t}h_t+\sum_{i=1}^n\left\{\frac{\partial}{\partial x_i}\left[\left(y_i-\frac{1}{h_t^X}\int_{\mathbb{R}^n}y_ih_t dy\right)h_t\right]+ \frac{\partial}{\partial y_i}\left[\left(x_i-\frac{1}{h_t^Y}\int_{\mathbb{R}^n}x_ih_t dx\right)h_t\right]\right\}&=&0 \ ,
\end{eqnarray}
where
\begin{eqnarray}
\SO_{p_t,x}[\nabla_x c]&=&\nabla_x c(x,y)-\mathbb{E}_{(X_t,Y_t)\sim p_t}[\nabla_x c(X_t,Y_t)|X_t=x] \nonumber  \\
\textrm{and} \ \ \ \SO_{p_t,y}[\nabla_y c]&=&\nabla_y c(x,y)-\mathbb{E}_{(X_t,Y_t)\sim p_t}[\nabla_y c(X_t,Y_t)|Y_t=y] \ .
\end{eqnarray}
\begin{theorem}
\label{theorem:vlasov}
{Consider $(X_t,Y_t)$ to be the solution of the ODE system \eqref{eq:main-ODE}  with the initial condition $(X_0,Y_0)\sim \pi$, from an arbitrary coupling $\pi \in \Pi(\mu,\nu)$. Suppose $p_t$ is the joint law of $(X_t,Y_t)$ with the density $\rho_t$. Under the regularity assumptions stated in 
Assumption~\ref{ass:regularity}, $\rho_t$ is weak solution of the Vlasov-type equation Eq.~\eqref{eq:vlasov}.
}
\end{theorem}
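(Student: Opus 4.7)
This is a standard derivation of a continuity equation from an ODE flow, complicated only by the McKean--Vlasov nature of the velocity field (which depends on the current joint law $p_t$ through a conditional expectation). My plan is to establish the weak formulation of \eqref{eq:vlasov} directly: test the joint law $p_t$ against an arbitrary smooth compactly supported observable, apply the classical chain rule along the trajectories, and recognize the resulting identity as \eqref{eq:vlasov} in distributional form.

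\textbf{Key steps.} First, I would rewrite the OCD \eqref{eq:main-ODE} in terms of the projection operator $\SO$ introduced in the Notation and Setup: from $\SO_{p_t,X_t}[v] = v - \mathbb{E}_{p_t}[v|X_t]$ one reads off $v_1^{\textrm{ocd}} = -\SO_{p_t,X_t}[\nabla_x c]$ and $v_2^{\textrm{ocd}} = -\SO_{p_t,Y_t}[\nabla_y c]$. Next, for any $\phi \in C_c^\infty(\mathbb{R}^{2n})$, the granted well-posedness of \eqref{eq:main-ODE} together with smoothness of $c$ makes the trajectories $(X_t,Y_t)$ almost surely continuously differentiable, so the chain rule gives
\begin{align*}
\frac{d}{dt}\phi(X_t,Y_t) = \nabla_x\phi(X_t,Y_t)\cdot v_1^{\textrm{ocd}} + \nabla_y\phi(X_t,Y_t)\cdot v_2^{\textrm{ocd}}.
\end{align*}
Taking expectations, interchanging $d/dt$ with $\mathbb{E}$, and writing the expectations as integrals against $\rho_t$ then yields
\begin{align*}
\frac{d}{dt}\int_{\mathbb{R}^{2n}}\phi\,\rho_t\,dx\,dy = -\int_{\mathbb{R}^{2n}}\bigl(\nabla_x\phi\cdot\SO_{p_t,x}[\nabla_x c] + \nabla_y\phi\cdot\SO_{p_t,y}[\nabla_y c]\bigr)\rho_t\,dx\,dy,
\end{align*}
which is precisely the distributional identity equivalent to \eqref{eq:vlasov} tested against $\phi$. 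Since $\phi$ was arbitrary, $\rho_t$ is a weak solution.

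\textbf{Main obstacle.} The substantive technical point is not the geometry but the hygiene around the conditional expectation. One needs (i) a measurable regular version of $(x,y)\mapsto \mathbb{E}_{p_t}[\nabla_x c(X_t,Y_t)|X_t=x]$ so that $\SO_{p_t,x}[\nabla_x c]$ is a well-defined pointwise integrand appearing in \eqref{eq:vlasov}, and (ii) uniform local integrability in $t$ of the terms $\nabla\phi\cdot\SO_{p_t,\cdot}[\nabla_\cdot c]\,\rho_t$ to justify interchanging $d/dt$ with the expectation. Both are handled by smoothness of $c$, the compact support of $\phi$, the $L^2$-contraction property of conditional expectation, and Proposition~\ref{prop:marg-pres}, which guarantees that $X_t\sim\mu$ and $Y_t\sim\nu$ for all $t$; thus the conditioning operators act between the fixed second-moment-finite spaces $L^2(\mu,\mathbb{R}^n)$ and $L^2(\nu,\mathbb{R}^n)$ throughout the evolution, keeping all norms controlled independently of $t$.
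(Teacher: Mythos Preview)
Your proposal is correct and follows essentially the same route as the paper: test against an arbitrary $\phi\in C_c^\infty(\mathbb{R}^{2n})$, apply the chain rule along trajectories of \eqref{eq:main-ODE}, pass the time derivative through the expectation, and identify the result as the weak form of \eqref{eq:vlasov}. The only cosmetic difference is that the paper performs one more integration by parts to write the identity as $\int \phi\,\partial_t\rho_t = -\int \phi\,\nabla\!\cdot\!(\SO[\nabla c]\rho_t)$, whereas you stop at the equivalent form with $\nabla\phi$ on the left; your added remarks on measurable versions of the conditional expectation and on the $d/dt$--$\mathbb{E}$ interchange are technical hygiene the paper leaves implicit.
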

\noindent We expect that the Vlasov equation \eqref{eq:vlasov} admits a variational formulation in terms of Otto calculus \cite{jordan1998variational,ambrosio2007gradient,santambrogio2017euclidean, bashiri2020gradient}, if appropriate set of couplings is considered (those that can be generated by $\textrm{Tan}_{p_t,2} \Pi(\mu,\nu)$). However, we leave this interesting question to a separate work. Instead, we will present a variational formulation of OCD in the probability space  for the $L^2$ Monge-Kantorovich cost, in the next section. 
\end{enumerate}

\subsection{Key Features in $L^2$ Monge-Kantorovich} 
\label{sec:key-L2}
\noindent So far, we discussed that OCD  \eqref{eq:main-ODE} brings a given joint distribution closer to the solution of the Monge-Kantorovich problem. In other words, the dynamics decreases the cost $\mathbb{E}[c(X_t,Y_t)]$. In this section, we present a finer result on the decay of the quadratic cost $c(x,y)=\lVert x-y \rVert_2^2$, resulted from OCD, i.e. for the  ODE system
\begin{eqnarray}
\dot{X}_t=Y_t-\mathbb{E}_{p_t}[Y_t|X_t] \ \ \ \textrm{and} \ \ \ 
\dot{Y}_t=X_t-\mathbb{E}_{p_t}[X_t|Y_t] \label{eq:l2-ocd} \ .
\end{eqnarray}
In this scenario, the optimal transport problem is equivalent to maximizing correlation between $X_t$ and $Y_t$, see \cite{rachev2006mass,ruschendorf1990characterization}. Focusing on \eqref{eq:l2-ocd}, we show the following results.
\begin{enumerate}
\item {\bf Symmetric Positive Semi-Definite Correlation.} Consider $S^n_{+}$ to be the set of symmetric positive semi-definite matrices
\begin{eqnarray}
S^n_{+}&\defeq&\left\{A\in \mathbb{R}^{n\times n}|A=A^T, A\succeq 0\right\} \ .
\end{eqnarray}
Let us define the cross-correlation matrix
\begin{eqnarray}
J_{t}&\defeq&\mathbb{E}\left[X_t^\prime \otimes {Y_{t}^\prime} \right]
\end{eqnarray}
with $X_t^\prime\defeq X_t-\mathbb{E}[X_t]$ and $Y_t^\prime\defeq Y_t-\mathbb{E}[Y_t]$. We have the following result.
\begin{proposition}{
\label{prop:spd}
Consider $(X_t,Y_t)$ to be the solution of the ODE system \eqref{eq:main-ODE}  with the initial condition $(X_0,Y_0)\sim \mu \otimes \nu$, given the regularity assumptions stated in 
Assumption~\ref{ass:regularity}. Therefore 
\begin{eqnarray}
J_t\in S^n_{+} \ .
\end{eqnarray}
}
\end{proposition}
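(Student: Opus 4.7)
The plan is to prove cone membership by establishing (i) $J_0 = 0 \in S^n_+$ and (ii) $\dot J_t \in S^n_+$ for every $t \ge 0$. Since $S^n_+$ is a closed convex cone (and in particular closed under integration of PSD-valued maps), this yields $J_t = J_0 + \int_0^t \dot J_s\, ds \in S^n_+$.

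Step (i) is immediate from independence of the product initial condition $(X_0,Y_0)\sim \mu\otimes\nu$: one has $J_0 = \mathbb{E}[X_0']\,\mathbb{E}[Y_0']^T = 0$.

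For step (ii), I first apply Proposition~\ref{prop:marg-pres}, which is available because $v^{\textrm{ocd}}\in \textrm{Tan}_{p_t,2}\Pi(\mu,\nu)$ by construction of OCD. This ensures the marginals are preserved, so that $\mathbb{E}[X_t]$ and $\mathbb{E}[Y_t]$ are constant in $t$, and hence $\dot X_t' = \dot X_t$ and $\dot Y_t' = \dot Y_t$. Therefore
\begin{equation*}
\dot J_t = \mathbb{E}\bigl[\dot X_t\,(Y_t')^T\bigr] + \mathbb{E}\bigl[X_t'\, \dot Y_t^T\bigr].
\end{equation*}
The central step is to identify each of the two expectations with an expected conditional covariance. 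Writing $g(X_t):=\mathbb{E}_{p_t}[Y_t\mid X_t]$ and substituting \eqref{eq:l2-ocd}, the tower property gives $\mathbb{E}[g(X_t)\,Y_t^T] = \mathbb{E}[g(X_t)\,g(X_t)^T]$, from which a direct computation yields
\begin{align*}
\mathbb{E}\bigl[\dot X_t\,(Y_t')^T\bigr]
&= \mathrm{Cov}(Y_t) - \mathrm{Cov}\bigl(g(X_t)\bigr) \\
&= \mathbb{E}\bigl[\mathrm{Cov}(Y_t\mid X_t)\bigr],
\end{align*}
where the last equality is exactly the law of total covariance. The symmetric argument gives $\mathbb{E}[X_t'\dot Y_t^T] = \mathbb{E}[\mathrm{Cov}(X_t\mid Y_t)]$.

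Combining the two contributions, $\dot J_t = \mathbb{E}[\mathrm{Cov}(Y_t\mid X_t)] + \mathbb{E}[\mathrm{Cov}(X_t\mid Y_t)]$ is a sum of two symmetric PSD matrices, hence an element of $S^n_+$. The only real obstacle, modest as it is, sits in the tower-property identification $\mathbb{E}[g(X_t)(Y_t')^T]=\mathrm{Cov}(g(X_t))$; once that is in place, both the symmetry and the positive semi-definiteness of $\dot J_t$, and therefore of $J_t$, follow routinely from standard properties of conditional covariance matrices.
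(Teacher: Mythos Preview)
Your argument is correct and follows the same overall architecture as the paper: show $J_0\in S^n_+$, compute $\dot J_t$ (using marginal preservation so that the means drop out), prove $\dot J_t\in S^n_+$, and integrate. The one place you diverge is in how the positive semi-definiteness of $\dot J_t$ is obtained. The paper isolates a preparatory lemma stating that $R=\mathbb{E}[A\otimes A - A\otimes \mathbb{E}[A|B]]\in S^n_+$ and proves it by combining Cauchy--Schwarz with the contraction property of conditional expectation; you instead recognise the very same quantity as $\mathbb{E}[\mathrm{Cov}(A\mid B)]$ via the law of total covariance, which makes both symmetry and PSD immediate. The two computations are equivalent (the tower identity $\mathbb{E}[A\otimes \mathbb{E}[A|B]]=\mathbb{E}[\mathbb{E}[A|B]\otimes \mathbb{E}[A|B]]$ underlies both), so this is a difference of packaging rather than of substance; your route is arguably cleaner since it avoids the auxiliary inequality step.
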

\noindent In other words the $L^2$-OCD \eqref{eq:l2-ocd}  ensures that the cross-correlation created between $X_t$ and $Y_t$ remains in the space of symmetric positive semi-definite matrices. 
 \item {\textbf{Sharp Descent and Variational Formulation.}}   The $L^2$-OCD \eqref{eq:l2-ocd} gives rise to a sharp  decay in the cost $\mathbb{E}[c(X_t,Y_t)]$. More precisely, if we restrict the set of velocities to the tangent space $ \ker(\PO_{p_t,X})\times\ker( \PO_{p_t,Y})$, the OCD gives fastest decay in the cost  $\mathbb{E}[c(X_t,Y_t)]$.
 \begin{theorem}
\label{theorem:sharp-descent}
 Under the regularity assumptions stated in 
Assumption~\ref{ass:regularity}, consider $(X_t,Y_t)$ to be the solution of the ODE system \eqref{eq:gen-ODE} and the initial condition $(X_0,Y_0)\in \pi$, where $\pi\in\Pi(\mu,\nu)$. 
    Let 
\begin{eqnarray}
\label{eq:potential}
\mathcal{C}(p_t)&\vcentcolon=&\frac{1}{2}\int_{\mathbb{R}^{2n}} \lVert x-y \rVert_2 ^2 p_t (dx,dy)
\end{eqnarray}
be the $L^2$ transport cost and
\begin{eqnarray}
\label{eq:lyup}
\mathcal{L}_{p_t}(v)&\vcentcolon=&\frac{d}{dt}\mathcal{C}(p_t)+\frac{1}{2}\int_{\mathbb{R}^{2n}}\lVert v \rVert_2^2p_t(dx,dy) \ ,
%\nonumber \\
%&=&\int_{\mathbb{R}^{2n}} |x-y|^2\partial_t h \  dxdy+\int_{\mathbb{R}^n}\phi_\pi^2hdx+\int_{\mathbb{R}^n}\psi_\pi^2hdy
\end{eqnarray}
its decay rate penalized by the $L^2$ norm of the velocities.
%where $h$ evolves with a process in $\mathcal{M}_{\phi,\psi}$, and
Define optimal velocities
\begin{eqnarray}
\label{eq:opt-vel-lyup}
v^{opt}_{p_t}&\defeq&\argmin_{v\in  \ker(\PO_{p_t,X})\times\ker( \PO_{p_t,Y})} \mathcal{L}_{p_t}(v) \ .
%\psi^{opt}_\pi&=&\argminA_{\psi_\pi\in\mathcal{K}_{\psi,\pi}} \mathcal{L}_\pi(\phi_\pi,\psi_\pi) \ .
\end{eqnarray}
Therefore
\begin{eqnarray}
\label{eq:v-opt}
v_{p_t,1}^{opt}(X_t,Y_t)&=&Y_t-\mathbb{E}_{p_t}[Y_t|X_t]\\
\textrm{and} \ \ \ \ v_{p_t,2}^{opt}(X_t,Y_t)&=&X_t-\mathbb{E}_{p_t}[X_t|Y_t] \label{eq:w-opt} \ .
\end{eqnarray}
\end{theorem}
\noindent This result can be employed as variational formulation of $L^2$-OCD. The velocities $v_{1,2}^{\textrm{ocd}}$ admit the variational formulation
 \begin{eqnarray}
 v_{1,2}^{\textrm{ocd}}=  \argmin_{v\in \ker(\PO_{p_t,X})\times\ker(\PO_{p_t,Y})}\left[\frac{1}{2}\frac{d}{dt} \int_{\mathbb{R}^{2n}}\lVert x-y \rVert_2^2dp_t+\frac{1}{2}\int_{\mathbb{R}^{2n}} \lVert v \rVert_2^2dp_t\right] \ . \nonumber
 \end{eqnarray}

\begin{remark}\label{rem:JKO-1D}
In 1D, we expect that the variational formulation given above can be cast in the Jordan-Kinderlehrer-Otto (JKO) type flow \cite{jordan1998variational}. We do not present a proof but motivate this by the following. Assume $n=1$, $\mu,\nu\in\mathcal P_2(\mathbb R)$ are absolutely continuous. 
%For $p\in\mathcal C$, denote by $\textrm{Tan}_p^{\mathrm{cpl}}$,
%\[
%\textrm{Tan}_p^{\mathrm{cpl}}
%:=\Big\{v=(v_1,v_2)\in L^2(p;\mathbb R^2):\ \mathbb E_p[v_1(X,Y)\mid X]=0,\ \mathbb E_p[v_2(X,Y)\mid Y]=0\Big\}
%\]
%the coupling-preserving tangent space, and define the {restricted Benamou--Brenier} distance \cite{benamou2000computational}
%\[
%\mathsf D^2(p_0,p_1)
%:= \inf_{\substack{(p_s,v_s)_{s\in[0,1]}\\
%\partial_s p_s+\nabla\!\cdot(v_s p_s)=0,\ v_s\in\textrm{Tan}_{p_s}^{\mathrm{cpl}}}}
%\ \int_0^1\!\!\int_{\mathbb R^2} |v_s|^2\,dp_s\,ds .
%\]
%Let $V(p):=\int_{\mathbb R^2}(x-y)^2\,dp(x,y)$. 
Then one can define the JKO form of $L^2$-OCD as
\begin{equation}\label{eq:JKO-1D}
p^{k+1}\in\arg\min_{p\in\Pi(\mu,\nu)}\Big\{\tfrac{1}{2\tau}W_2^2(p,p^k)+\mathcal{C}(p)\Big\}
%p^{k+1}\in\arg\min_{p\in\mathcal C}\Big\{\, V(p)+\tfrac{1}{2\tau}\,\mathsf D^2(p,p^k)\Big\}.
\end{equation}
with $\mathcal{C}(p)$ given by \eqref{eq:potential}.
We expect the limit curve $p_t$ of \eqref{eq:JKO-1D} as $\tau\downarrow0$ becomes the gradient flow of $\mathcal{C}(p)$ on $\Pi(\mu,\nu)$, with the velocity field $v_t$ given by $L^2$-OCD. This can be justified since in 1D, any marginal preserving scheme should have a velocity field $v_t\in \ker(\PO_{p_t,X})\times\ker( \PO_{p_t,Y})$. However we leave the technical proof of this analogy to dedicated studies. 
\end{remark}
\begin{remark}\label{rem:gen-cost}
Note that one can extend the steep descent formulation to a general continuously 
differentiable cost $c\in C^1(\mathbb R^n\times \mathbb R^n)$. While not pursued here in detail, we give a general argument.
For the functional $ \mathcal{C}(p)=\int c\,dp$, the first variation in a 
marginal-preserving direction $v=(v_1,v_2)\in {\ker(\PO_{p_t,X})\times\ker( \PO_{p_t,Y})}$ is
\begin{eqnarray}
D\mathcal{C}_p[v]
&=&\int \left(\nabla_x c\, v_1 + \nabla_y c\, v_2\,\right) dp.
\end{eqnarray}
Consequently, the unique minimizer of the quadratic functional
\[
v\mapsto D\mathcal{C}_p[v]+\tfrac12\|v\|_{L^2(p)}^2
\quad\text{over}\quad v=(v_1,v_2)\in {\ker(\PO_{p_t,X})\times\ker( \PO_{p_t,Y})}
\]
is the orthogonal projection of $-(\nabla_x c,\nabla_y c)$ onto
${\ker(\PO_{p_t,X})\times\ker( \PO_{p_t,Y})}$, namely
\[
v_1^{\textrm{ocd}}=-\nabla_x c+\mathbb E[\nabla_x c\mid X],\qquad
v_2^{\textrm{ocd}}=-\nabla_y c+\mathbb E[\nabla_y c\mid Y].
\]

%If, in addition, $c$ has symmetric mixed Hessian
%$\nabla_{xy}^2 c = (\nabla_{yx}^2 c)^\top$, 
%then this velocity gives the metric steepest descent of $\mathcal C$
%in the tangent space of coupling-preserving perturbations, and
%\[
%\frac{d}{dt}\mathcal C(p_t) = -\|v_t^\star\|_{L^2(p_t)}^2\le 0.
%\]
%Thus the steepest-descent characterization of Theorem~\ref{thm:steepest-descent}
%extends exactly to all costs with symmetric mixed derivatives, including
%all elliptic quadratic costs.
\end{remark}
%is the $L^2(p_t)$-orthogonal projection of the ambient Wasserstein gradient 
%$-\nabla^W V(x,y)=(2(y-x),\,2(x-y))$ onto $\Tan_{p_t}^{\mathrm{cpl}}$. 
%In $1$D, this orthogonal projection is realized by subtraction of conditional expectations, hence
%\[
%v_t(x,y)=\big(2\,[\,y-\mathbb E_{p_t}(Y\!\mid X{=}x)\,],\ \ 2\,[\,x-\mathbb E_{p_t}(X\!\mid Y{=}y)\,]\big) \ .
%\]
%and, after a time rescaling, $p_t$ solves the OCD probability flow. 
%In particular, Theorem~2.6 identifies OCD as the $\mathsf D$–steepest descent of $V$ on the \emph{space of couplings} $\mathcal C$.

\end{enumerate}
\section{Technical Results}
\label{sec:proof}
\subsection {{Review of Conditional Expectation Properties}}
In order to establish basic theoretical properties of the OCD \eqref{eq:main-ODE}, we often make use of the two following properties of the conditional expectation for random variables $A, B, C \in \mathcal{H}$ \cite{chung2000course, borovkov1999probability}.
\begin{enumerate}
\item \label{prop:cont} {\it Contraction property:} As a result of Jensen's inequality, we have
\begin{eqnarray}
\mathbb{E}[\lVert A\rVert_2^2] \ge \mathbb{E}\left[\lVert \mathbb{E}[A|B] \rVert_2^2\right] \ .
\end{eqnarray}
\item \label{prop:self} {\it Self-adjoint property:}  Due to the law of total expectation, we have
\begin{eqnarray}
\mathbb{E}[A  \cdot \mathbb{E}[B|C]]&=&\mathbb{E}[B  \cdot \mathbb{E}[A|C]] \ .
\end{eqnarray}
\end{enumerate}
\subsection{Assumptions}
 We do not attempt to prove well-posedness of \eqref{eq:main-ODE} in this work, instead we collect below a set of regularity assumptions under which the
subsequent derivations are justified. These are standard in the literature on
McKean--Vlasov dynamics, see e.g.~\cite{sznitman1991topics,bernou2023path}.
%In the whole theoretical discussion that follows, we assume that the marginals $\mu$ and $\nu$ are chosen such that the ODE \eqref{eq:main-ODE} initialized with $(X_0,Y_0)\sim \pi$, with {\color{blue} the initial coupling $\pi=\mu\otimes \nu$}, has a unique solution global in time, whose measure admits a density in $\mathbb{R}^{2n}$ for all $t\ge 0$. {\color{blue} Furthermore, we assume that the velocities $v_{1,2}(X_t,Y_t)$ remain measurable for all $t\ge 0$}. We denote such a regular set of marginals {\color{blue}{for a given $c$}}, by $\mu,\nu \in \mathcal{P}_{r}(\mathbb{R}^n)$. \\ \ \\
In particular, in order to justify the analytical steps, we make explicit the following regularity assumptions.
\begin{assumption}[Regularity of densities and velocities]
\label{ass:regularity}
For each $t\ge 0$, the joint law $p_t$ of $(X_t,Y_t)$ admits a Lebesgue density
$\rho_t \in L^1(\mathbb{R}^{2n})$ with finite second moment. Moreover:
\begin{enumerate}
\item The map $t\mapsto \rho_t$ is weakly continuous in $L^1$.
\item The velocity field $v_t=(v_{1,t},v_{2,t})$ satisfies
$v_t \in L^2(p_t)$ and $v_t \rho_t \in L^1(\mathbb{R}^{2n})$.
\item The cost satisfies $c\in C^1(\mathbb{R}^{2n})$ and its gradients
$\nabla_x c$, $\nabla_y c$ have at most linear growth
\[
|\nabla_x c(x,y)| + |\nabla_y c(x,y)| \le C(1+|x|+|y|).
\]
\item Conditional expectations such as $\mathbb{E}_{p_t}[\nabla_x c(X_t,Y_t)\mid X_t=x]$
exist in $L^2(\mu)$ and define measurable functions of $(x,y)$.
\item 
There exists $L>0$ such that for all couplings $p,q\in\Pi(\mu,\nu)$,
\[
\left\| 
\mathbb{E}_{p}[\nabla_x c(X,Y)\mid X]
-
\mathbb{E}_{q}[\nabla_x c(X,Y)\mid X]
\right\|_{L^2(\mu)}
\;\le\;
L\,W_2(p,q),
\]
and similarly for the conditional expectation with respect to $Y$.
\end{enumerate}
\end{assumption}
\begin{remark}[Stability of the orthogonal complement operator $\SO_p$]
For fixed $p\in\Pi(\mu,\nu)$, the maps
\[
\SO_{p,x}:L^2(p;\mathbb R^n)\to L^2(p;\mathbb R^n),\qquad
\SO_{p,y}:L^2(p;\mathbb R^n)\to L^2(p;\mathbb R^n),
\]
are linear $L^2$-contractions, since they are of the form 
$\SO_{p,x}=I-\PO_{p,X}$ and $\SO_{p,y}=I-\PO_{p,Y}$, where 
$\PO_{p,X},\PO_{p,Y}$ are conditional-expectation operators. Moreover, Assumption~\ref{ass:regularity}(5) implies a stability
of $\SO_p$ with respect to the law $p$ in the directions 
$f=\nabla_x c$ and $f=\nabla_y c$. Indeed, for any
$p,q\in\Pi(\mu,\nu)$,
\[
\|\SO_{p,x}[\nabla_x c]-\SO_{q,x}[\nabla_x c]\|_{L^2(\mu)}
=
\|\mathbb E_q[\nabla_x c\mid X] - \mathbb E_p[\nabla_x c\mid X]\|_{L^2(\mu)}
\le
L\,W_2(p,q),
\]
and similarly for $S_{p,y}[\nabla_y c]$. 
%In particular, as a result of the regulartiy assumption, the OCD drift
%field $F(z,p)=(S_{p,y}[\nabla_y c],S_{p,x}[\nabla_x c])$ is Lipschitz in 
%the measure variable in the sense of the 2-Wasserstein distance.
\end{remark}
\begin{remark}[On the tangent space $\textrm{Tan}_{p_t,2}\Pi(\mu,\nu)$]\label{remark:kernel_space}
Note that
\[
\textrm{Tan}_{p_t,2}\Pi(\mu,\nu)
=
\ker(\PO_{p_t,X}[v_1])\times\ker( \PO_{p_t,Y}[v_2]),
\]
where by definition
\[
\PO_{p,X}[v_1]=\mathbb{E}_p[v_1(X,Y)\mid X],
\qquad
\PO_{p,Y}[v_2]=\mathbb{E}_p[v_2(X,Y)\mid Y].
\]
Since conditional expectation is a bounded linear operator, their kernels are closed linear
subspaces of $L^2(p;\mathbb{R}^n)$.
Thus $\textrm{Tan}_{p_t,2}\Pi(\mu,\nu)$ is a closed linear subspace of
$L^2(p;\mathbb{R}^{2n})$.
\end{remark}

\begin{comment}
{\color{blue}{To gain some insight about the type of regularity we need on marginals $(\mu,\nu)$ and the cost $c(x,y)$, note that at least the initial joint measure needs to admit a density in $L^1(\mathbb{R}^{2n})$ and the cost $c\in C^1(\mathbb{R}^{2n})$. Furthermore, sufficient conditions can be derived by relying on classical results of McKean-Vlasov processes. 
In particular, if we describe the dynamics as 
\begin{eqnarray}
\dot{Z_t}=F(Z_t,p_t)
\end{eqnarray}
where $Z_t=(X_t,Y_t)$, the following condition would be sufficient for regularity
\begin{eqnarray}
\label{eq:lipschitz}
\left \|F(Z,\pi)-F(Z^\prime,\pi^\prime)\right\|&\leq&C\left(\|Z-Z^\prime\|+W_2(\pi,\pi^\prime)\right)
\end{eqnarray}
for some positive $C$, and $\forall \pi,\pi^\prime \in\Pi(\mu,\nu)$ \cite{sznitman1991topics,bernou2023path}. The precise analysis of regularity conditions on $\mu,\nu,c$, and their relaxations leading to \eqref{eq:lipschitz} would be postponed to more dedicated studies on the theoretical aspect of the dynamics.% but can be readily checked e.g. for the case of Gaussian measures and $L^p$ costs. 
}}
\end{comment}
\subsection{Proofs}

\begin{proof}[Proof of Proposition~\ref{prop:marg-pres} (Marginal preservation)]
By definition of the tangent space $\textrm{Tan}_{p_t,2} \Pi(\mu,\nu)$ we have
\begin{eqnarray}
\mathbb{E}_{p_t}[v_1(X_t,Y_t)\mid X_t]=0,
\qquad
\mathbb{E}_{p_t}[v_2(X_t,Y_t)\mid Y_t]=0. \label{eq:constraint_proof}
\end{eqnarray}
Let $g\in C_c^\infty(\mathbb{R}^n)$ be a smooth, compactly supported test function.
Using Assumption~\ref{ass:regularity}(1)--(3), the chain rule for the 
ODE~\eqref{eq:main-ODE} yields
\[
\frac{d}{dt}\mathbb{E}[g(X_t)]
=
\mathbb{E}\big[ \nabla g(X_t)\cdot v_{1,t}(X_t,Y_t) \big],
\]
where the exchange of derivative and expectation is justified because 
$\nabla g$ is bounded and $v_t\in L^2(p_t)$.
Using the self-adjointness of conditional expectation 
(see property \ref{prop:self}) we obtain
\[
\mathbb{E}\big[ \nabla g(X_t)\cdot v_{1,t}(X_t,Y_t) \big]
=
\mathbb{E}\big[\nabla g(X_t)\cdot 
\mathbb{E}[v_{1,t}(X_t,Y_t)\mid X_t]\big].
\]
By \eqref{eq:constraint_proof} the right-hand side vanishes, hence
$\frac{d}{dt}\mathbb{E}[g(X_t)]=0$ for all $g\in C_c^\infty(\mathbb{R}^n)$.
This implies that the marginal of $X_t$ is constant in time, i.e.\ $X_t\sim\mu$.
The argument for $Y_t$ is identical.
\end{proof}

\noindent In particular, along the OCD dynamics the marginals remain fixed:
if $(X_0,Y_0)\sim \pi\in\Pi(\mu,\nu)$, then $X_t\sim\mu$ and $Y_t\sim\nu$ for all $t\ge0$.
%\begin{corollary}
%\label{prop:marginal}
%Consider $(X_t,Y_t)$ to be the solution of OCD \eqref{eq:main-ODE}  with the initial condition $(X_0,Y_0)\sim \pi$, from an arbitrary coupling $\pi \in \Pi(\mu,\nu)$, where $\mu,\nu \in \mathcal{P}_r(\mathbb{R}^n)$. Therefore $X_t\sim \mu$ and $Y_t\sim \nu$ for all $t\ge 0$.
%\begin{proof}{ It is special case of previous Proposition \ref{prop:marg-pres}.
%}
%\end{proof}
%\end{corollary}

\begin{proof}[Proof of Proposition~\ref{prop:pos} (Descent in Cost)]
By definition of the cost functional,
\[
\mathcal C(t)
:= \mathbb{E}[c(X_t,Y_t)] 
= \int_{\mathbb{R}^{2n}} c(x,y)\, p_t(dx,dy).
\]
Using that $c\in C^1$ with at most linear growth of its gradients and that $v_t\in L^2(p_t)$
(Assumption~\ref{ass:regularity}), we may differentiate under the expectation to obtain
\begin{eqnarray}
\frac{d}{dt}\mathcal C(t)
=
\mathbb{E}\!\left[
\nabla_x c(X_t,Y_t)\cdot v_{1,t}(X_t,Y_t)
+
\nabla_y c(X_t,Y_t)\cdot v_{2,t}(X_t,Y_t)
\right].
\label{eq:chain-rule-cost}
\end{eqnarray}
For OCD, the velocity field is
\[
v_{1,t}=\nabla_y c - \mathbb{E}[\nabla_y c \mid X_t],
\qquad
v_{2,t}=\nabla_x c - \mathbb{E}[\nabla_x c \mid Y_t].
\]
Substituting these into \eqref{eq:chain-rule-cost} and rearranging gives
\[
\frac{d}{dt}\mathcal C(t)
=
-\mathbb{E}\!\left[
(\nabla_x c - \mathbb{E}[\nabla_x c\mid Y_t])\cdot \nabla_x c
+
(\nabla_y c - \mathbb{E}[\nabla_y c\mid X_t])\cdot \nabla_y c
\right].
\]
Using the self-adjointness of conditional expectation in $L^2(p_t)$
(see property~\ref{prop:self}), we obtain
\[
\mathbb{E}\big[\,\nabla_x c \cdot \mathbb{E}[\nabla_x c\mid Y_t]\,\big]
=
\mathbb{E}\big[\,\mathbb{E}[\nabla_x c\mid Y_t] \cdot \mathbb{E}[\nabla_x c\mid Y_t]\,\big],
\]
and similarly for the $y$-component.
Hence
\begin{eqnarray}
\frac{d}{dt}\mathcal C(t)
=
-
\left(
\|\nabla_x c - \mathbb{E}[\nabla_x c\mid Y_t]\|_{L^2(p_t)}^2
+
\|\nabla_y c - \mathbb{E}[\nabla_y c\mid X_t]\|_{L^2(p_t)}^2
\right). \label{eq:cost-decay}
\end{eqnarray}
Since conditional expectation is an $L^2$-contraction 
(see property~\ref{prop:cont}),
each term in \eqref{eq:cost-decay} is nonnegative, so the right-hand side is
nonpositive. Therefore
\[
\frac{d}{dt}\mathcal C(t)\le 0,
\]
with equality if and only if the two OCD velocity components vanish
$p_t$-almost surely. %This proves the strict descent property.
\end{proof}

\begin{comment}
\begin{proof}[Proof of Proposition \ref{prop:pos} (Descent in Cost)]{
Note that
\begin{eqnarray}
\frac{d}{d t}\mathbb{E}[c(X_t,Y_t)] &=&\mathbb{E}[\nabla_x c(X_t,Y_t) \cdot v_1^{\textrm{ocd}}(X_t,Y_t)+\nabla_y c(X_t,Y_t) \cdot v_2^{\textrm{ocd}}(X_t,Y_t)]\nonumber \\
&=& -(J_x+J_y) \ ,
\end{eqnarray}
where 
\begin{eqnarray}
J_x&\defeq&\mathbb{E}\left[\nabla_x c \cdot \nabla_x c + \nabla_x c \cdot \mathbb{E}_{p_t}[\nabla_x c|Y_t]\right] 
\end{eqnarray}
and
\begin{eqnarray}
J_y&\defeq&\mathbb{E}\left[\nabla_y c \cdot \nabla_y c+ \nabla_y c \cdot \mathbb{E}_{p_t}[\nabla_y c|X_t]\right] \ .
\end{eqnarray}
However, we have $J_{x,y}\ge 0$, due to contraction property \ref{prop:cont}.
}
\end{proof}
\end{comment}

%\begin{comment}
\begin{proof}[Proof of Proposition \ref{prop:sub-opt} (Instability of Sub-Optimal Couplings)]{
First note that marginals of $p_T^\epsilon$ are $\mu$ and $\nu$ (due to Proposition \ref{prop:marg-pres}). Next, following the definition of $\pi^{\textrm{opt}}_{\mu,\nu}$ (see Eq.~\eqref{eq:pi_opt}), we have  
\begin{eqnarray}
\int_{\mathbb{R}^{2n}}c(x,y)p_T^\epsilon (dx,dy) < \int_{\mathbb{R}^{2n}}c(x,y)p_T(dx,dy) \ .
\end{eqnarray}
However since 
\begin{eqnarray}
\frac{d}{d t}\mathbb{E}[c(X,Y)]  \leq 0 
\end{eqnarray}
due to Proposition \ref{prop:pos},  we have
\begin{eqnarray}
\int_{\mathbb{R}^{2n}}c(x,y)p_t (dx,dy) &\leq & \int_{\mathbb{R}^{2n}}c(x,y)p_T^\epsilon (dx,dy)< \int_{\mathbb{R}^{2n}}c(x,y)p_T(dx,dy) 
\end{eqnarray}
for all $t>T$. Thus given $t>T$,
$\exists A\in\mathcal{B}(\mathbb{R}^{2n})$ for which $p_{t}(A) \neq p_T(A)$.
}
\end{proof}
%\end{comment}
%\begin{eqnarray}
%\label{eq:vlasov}
%\partial_t \rho_t+\nabla_x \cdot \left(\rho_t\SO_{p_t,x}[\nabla_x c]\right)+\nabla_y \cdot \left(\rho_t\SO_{p_t,y}[\nabla_y c]\right)&=&0 \ ,
%\frac{\partial}{\partial t}h_t+\sum_{i=1}^n\left\{\frac{\partial}{\partial x_i}\left[\left(y_i-\frac{1}{h_t^X}\int_{\mathbb{R}^n}y_ih_t dy\right)h_t\right]+ \frac{\partial}{\partial y_i}\left[\left(x_i-\frac{1}{h_t^Y}\int_{\mathbb{R}^n}x_ih_t dx\right)h_t\right]\right\}&=&0 \ ,
%\end{eqnarray}
%where
%\begin{eqnarray}
%\SO_{p_t,x}[\nabla_x c]&=&\nabla_x c(x,y)-\mathbb{E}_{(X_t,Y_t)\sim p_t}[\nabla_x c(X_t,Y_t)|X_t=x] \\
%\textrm{and} \ \ \ \SO_{p_t,y}[\nabla_y c]&=&\nabla_y c(x,y)-\mathbb{E}_{(X_t,Y_t)\sim p_t}[\nabla_y c(X_t,Y_t)|Y_t=y] \ .
%\end{eqnarray}
%}
%where $h_t^{X}$ and $h_t^Y$ are marginal densities of $X_t$ and $Y_t$, respectively. 

\begin{proof}[Proof of Theorem~\ref{theorem:vlasov} (McKean--Vlasov system)]
Let $g\in C_c^\infty(\mathbb{R}^{2n})$ be a smooth, compactly supported test
function. Since $c\in C^1$ with at most linear growth of its gradients, and $v_t\in L^2(p_t)$
(Assumption~\ref{ass:regularity}), the chain rule for the ODE
\eqref{eq:main-ODE} and dominated convergence yield
\begin{equation}
\label{eq:vlasov-weak-derivative}
\frac{d}{dt}\mathbb{E}[g(X_t,Y_t)]
=
\mathbb{E}\!\left[
\nabla_x g(X_t,Y_t)\cdot v_{1,t}(X_t,Y_t)
+
\nabla_y g(X_t,Y_t)\cdot v_{2,t}(X_t,Y_t)
\right].
\tag{3.22}
\end{equation}
Since $\mathbb{E}[g(X_t,Y_t)] = \int g(x,y)\,\rho_t(x,y)\,dx\,dy$, the left-hand
side of \eqref{eq:vlasov-weak-derivative} equals
\[
\frac{d}{dt}\int_{\mathbb{R}^{2n}} g(x,y)\,\rho_t(x,y)\,dx\,dy.
\]
\noindent We now rewrite the right-hand side of \eqref{eq:vlasov-weak-derivative}.
Recall that
\[
v_{1,t}(x,y)
=
\nabla_x c(x,y)
-
\mathbb{E}_{p_t}[\nabla_x c(X_t,Y_t)\mid X_t=x]
=
\SO_{p_t,x}[\nabla_x c](x,y),
\]
and analogously
\(
v_{2,t} = \SO_{p_t,y}[\nabla_y c].
\)
Since $g$ has compact support and $v_t\rho_t\in L^1(\mathbb{R}^{2n})$
(Assumption~\ref{ass:regularity}(2)), we may apply integration by parts
{in the sense of distributions} to obtain
\[
\int_{\mathbb{R}^{2n}}
\nabla_x g \cdot v_{1,t}\,\rho_t \ dxdy
=
-\int_{\mathbb{R}^{2n}}
g(x,y)\,\nabla_x\!\cdot\!\big(\rho_t(x,y)\,v_{1,t}(x,y)\big)\,dx\,dy,
\]
and likewise for the $y$--term
\[
\int_{\mathbb{R}^{2n}}
\nabla_y g \cdot v_{2,t}\,\rho_t\ dxdy
=
-\int_{\mathbb{R}^{2n}}
g(x,y)\,\nabla_y\!\cdot\!\big(\rho_t(x,y)\,v_{2,t}(x,y)\big)\,dx\,dy.
\]
\noindent Substituting these identities into
\eqref{eq:vlasov-weak-derivative}, we obtain
\[
\frac{d}{dt}\int_{\mathbb{R}^{2n}} g\,\rho_t \ dxdy
=
-\int_{\mathbb{R}^{2n}}
g\,\nabla_x\!\cdot(\rho_t v_{1,t})\ dxdy
-
\int_{\mathbb{R}^{2n}}
g\,\nabla_y\!\cdot(\rho_t v_{2,t}) \ dxdy.
\]
Since $g$ is an arbitrary test function in $C_c^\infty(\mathbb{R}^{2n})$, we have the weak formulation
\[
\partial_t\rho_t
+
\nabla_x\!\cdot(\rho_t v_{1,t})
+
\nabla_y\!\cdot(\rho_t v_{2,t})
= 0,
\]
which is the Vlasov equation \eqref{eq:vlasov}. All divergences above
are understood in the distributional sense, i.e. no pointwise derivatives of $\rho_t$
are required.
\end{proof}

\begin{lemma} [Preparatory for Proposition \ref{prop:spd}]
\label{lemma:pos-def}
Consider $A,B \in \mathcal{H}$ and let 
\begin{eqnarray}
R&\defeq&\mathbb{E}[A\otimes A-A\otimes \mathbb{E}[A|B]] \ .
\end{eqnarray}
Therefore 
\begin{eqnarray}
R&\in& S^n_{+} \ .
\end{eqnarray}
\begin{proof}{
First note that $R$ is symmetric due to the self-adjoint property of the conditional expectation (see property \ref{prop:self}). Choose an arbitrary $v\in\mathbb{R}^n$ and let  $L\defeq\sum_{i=1}^n v_iA_i$.  We have
\begin{eqnarray}
\sum_{i,j=1}^nv_i\mathbb{E}[A_i{\mathbb{E}[A_j|B]}]v_j=\mathbb{E}[L{\mathbb{E}[L|B]}]&\leq& \frac{1}{2}\left(\mathbb{E}[\lVert L \rVert_2^2]+\mathbb{E}\left[\lVert{\mathbb{E} [L|B]}\rVert_2^2\right]\right) \ \ \ \ \textrm{(Cauchy-Schwartz)} \nonumber \\
&\leq&\mathbb{E}[\lVert L \rVert_2^2]  \ \ \ \ \ \ \ \ \ \ \ \ \ \ \ \ \ \ \ \ \ \ \ \ \  \textrm{(contraction property \ref{prop:cont})} \nonumber \\
&=&\sum_{i,j=1}^n v_i\mathbb{E}[A_iA_j]v_j  \ . \nonumber
%\mathbb{E}[X_iX_j]-\mathbb{E}[X_i\mathbb{E}[X_j|Y]]&=&\overbrace{\mathbb{E}[(X_i-\mathbb{E}[X_i|Y])\left(X_j-\mathbb{E}[X_j|Y]\right)]}^{H_{ij}} \nonumber \\
%&+&\mathbb{E}[\mathbb{E}[X_i|Y]\left(X_j-\mathbb{E}[X_j|Y]\right)]
\end{eqnarray}
Therefore $\sum_{i,j}^n v_iR_{ij}v_j \ge0$, and thus $R\in S^n_{+}$. 
}
\end{proof}
\end{lemma}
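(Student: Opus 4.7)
The plan is to verify the two defining properties of $S^n_+$ separately: symmetry of $R$ and non-negativity of the quadratic form $v^\top R v$ for every $v \in \mathbb{R}^n$.

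For symmetry, the first term $\mathbb{E}[A \otimes A]$ is manifestly symmetric, so the task reduces to checking that the cross term $\mathbb{E}[A_i\,\mathbb{E}[A_j|B]] = \mathbb{E}[A_j\,\mathbb{E}[A_i|B]]$. This is immediate from the self-adjoint property of the conditional expectation applied with the $\sigma$-algebra generated by $B$; equivalently, writing it as $\mathbb{E}\bigl[\mathbb{E}[A_i|B]\,\mathbb{E}[A_j|B]\bigr]$ via the tower property makes the symmetry transparent.

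For positive semi-definiteness, I would reduce to a one-dimensional question by testing against an arbitrary direction $v \in \mathbb{R}^n$. Setting $L \defeq v \cdot A = \sum_i v_i A_i \in \mathcal{H}$, the quadratic form expands as
\begin{eqnarray}
v^\top R v = \mathbb{E}[L^2] - \mathbb{E}\bigl[L\,\mathbb{E}[L|B]\bigr].
\end{eqnarray}
The tower property turns the cross term into $\mathbb{E}\bigl[\mathbb{E}[L|B]^2\bigr]$, after which the contraction property of the conditional expectation (Jensen's inequality) gives
\begin{eqnarray}
v^\top R v = \mathbb{E}[L^2] - \mathbb{E}\bigl[\mathbb{E}[L|B]^2\bigr] \ge 0.
\end{eqnarray}
Since $v$ was arbitrary, we conclude $R \in S^n_+$.

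There is no real obstacle here; conceptually the result just restates that in $\mathcal{H}$ the residual $L - \mathbb{E}[L|B]$ is orthogonal to $\mathbb{E}[L|B]$, so the two squared norms differ by a non-negative amount. If one prefers to avoid invoking the tower identity directly, an alternative is to apply the Cauchy--Schwarz inequality to bound $\mathbb{E}[L\,\mathbb{E}[L|B]]$ by $\tfrac{1}{2}\bigl(\mathbb{E}[L^2] + \mathbb{E}[\mathbb{E}[L|B]^2]\bigr)$ and then close the argument with contraction, which yields the same conclusion at the cost of an extra step.
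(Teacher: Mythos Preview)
Your proof is correct and follows essentially the same approach as the paper: establish symmetry via the self-adjoint property, then reduce positive semi-definiteness to a scalar statement by setting $L = v\cdot A$ and invoke the contraction property of conditional expectation. Your primary route using the tower identity $\mathbb{E}[L\,\mathbb{E}[L|B]] = \mathbb{E}[\mathbb{E}[L|B]^2]$ is in fact a hair cleaner than the paper's, which takes the Cauchy--Schwarz detour you mention as your alternative.
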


\begin{proof}[Proof of Proposition~\ref{prop:spd} (Symmetric positive semidefinite correlation)]
Recall that $J_t = \mathbb{E}[X_t\otimes Y_t]$.
Differentiating in time and using the $L^2$-OCD dynamics
\[
\dot X_t = Y_t - \mathbb{E}[Y_t\mid X_t],\qquad
\dot Y_t = X_t - \mathbb{E}[X_t\mid Y_t],
\]
together with Assumption~\ref{ass:regularity} and the chain rule, we obtain
\[
\frac{\partial}{\partial t} J_t
=
\mathbb{E}\big[\dot X_t\otimes Y_t\big]
+
\mathbb{E}\big[X_t\otimes \dot Y_t\big].
\]
Substituting the expressions for $\dot X_t$ and $\dot Y_t$ yields
\begin{align}
\frac{\partial}{\partial t}J_t
&=
\mathbb{E}\big[(Y_t - \mathbb{E}[Y_t\mid X_t])\otimes Y_t\big]
+
\mathbb{E}\big[X_t\otimes (X_t - \mathbb{E}[X_t\mid Y_t])\big]
\nonumber\\
&=
\mathbb{E}\big[Y_t\otimes Y_t - Y_t\otimes \mathbb{E}[Y_t\mid X_t]\big]
+
\mathbb{E}\big[X_t\otimes X_t - X_t\otimes \mathbb{E}[X_t\mid Y_t]\big]
\nonumber\\
&=: R_t^J.
\label{eq:corr-ev}
\end{align}
We claim that $R_t^J\in S_+^n$ for all $t\ge 0$.
Indeed, the first term in \eqref{eq:corr-ev} has the form
\[
\mathbb{E}\big[A\otimes A - A\otimes \mathbb{E}[A\mid B]\big]
\quad\text{with } A=Y_t,\ B=X_t,
\]
and is therefore symmetric positive semidefinite by Lemma~\ref{lemma:pos-def}.
The same lemma applies to the second term in \eqref{eq:corr-ev}, with
$A=X_t$ and $B=Y_t$, so both contributions lie in $S_+^n$ and hence
$R_t^J\in S_+^n$. The
matrix $J_t$ can be written as
\[
J_t = J_0 + \int_0^t R_s^J\,ds.
\]
By assumption $J_0\in S_+^n$,
and the integral of symmetric positive semidefinite matrices is again
symmetric positive semidefinite. Therefore $J_t\in S_+^n$ for all $t\ge0$.
\end{proof}

\begin{comment}
\begin{proof}[Proof of Proposition \ref{prop:spd} (Symmetric Positive Semi-Definite Correlation)]{
By direct computation and noting that the marginals are preserved under the action of the ODE (see Proposition \ref{prop:marginal}), we get
\begin{eqnarray}
\label{eq:corr-ev}
\frac{\partial}{\partial t}J_{t}&=&R^J_{t}\defeq {\mathbb{E}\left[Y_{t}\otimes Y_{t}-Y_{t}\otimes \mathbb{E}[Y_{t}|X_t]\right]+\mathbb{E}\left[X_{t}\otimes X_{t}-X_{t}\otimes \mathbb{E}[X_{t}|Y_t]\right]} \ .
\end{eqnarray}
However $R^J_t\in S^n_{+}$, due to the previous lemma \ref{lemma:pos-def}. 
Besides, since $J_{0}\in S^n_{+}$ (by construct due to $(X_0,Y_0)\sim \mu \otimes \nu$), we have $J_{t}\in S^n_{+}$. 
}
\end{proof}
\end{comment}

\begin{lemma}[Orthogonal projections onto marginal–preserving subspaces]
\label{lemma:opt-marg-pres}
Let $(X,Y)\sim\pi\in\Pi(\mu,\nu)$ and consider the Hilbert space 
$L^2(\pi;\mathbb{R}^n)$ with inner product 
$\langle f,g\rangle = \mathbb{E}_\pi[f\cdot g]$.
Define the closed linear subspaces
\[
\mathcal{H}_X^\perp 
:= \{\,\phi\in L^2(\pi;\mathbb{R}^n): 
\mathbb{E}_\pi[\phi(X,Y)\mid X]=0 \,\},
\]
\[
\mathcal{H}_Y^\perp 
:= \{\,\psi\in L^2(\pi;\mathbb{R}^n): 
\mathbb{E}_\pi[\psi(X,Y)\mid Y]=0 \,\}.
\]
Both are closed in $L^2(\pi)$ since conditional expectation is a bounded 
linear operator. Consider the minimization problems
\[
\tilde\phi := 
\argmin_{\phi\in\mathcal{H}_X^\perp}
\mathbb{E}_\pi\!\left[\|\,Y-\phi(X,Y)\,\|_2^2\right],
\]
\[
\tilde\psi := 
\argmin_{\psi\in\mathcal{H}_Y^\perp}
\mathbb{E}_\pi\!\left[\|\,X-\psi(X,Y)\,\|_2^2\right].
\]
Then the minimizers exist, are unique, and satisfy
\[
\tilde\phi(X,Y) = Y - \mathbb{E}_\pi[Y\mid X],
\qquad
\tilde\psi(X,Y) = X - \mathbb{E}_\pi[X\mid Y].
\]
\end{lemma}

\begin{proof}
The functional $\phi\mapsto \mathbb{E}\|Y-\phi\|^2$ is strictly convex, continuous, and 
coercive on $L^2(\pi)$, and $\mathcal{H}_X^\perp$ is a closed linear subspace. 
Hence, there exists a unique minimizer, 
namely the orthogonal projection of $Y$ onto $\mathcal{H}_X^\perp$. Let $\PO$ denote this projection. Since $\mathcal{H}_X^\perp$ is the kernel of 
the bounded operator $\phi\mapsto \mathbb{E}[\phi\mid X]$, we have 
\[
\PO_{p,X} Y = Y - \mathbb{E}_p[Y\mid X].
\]
The same argument applies for $\psi$, yielding 
$\tilde\psi = X - \mathbb{E}[X\mid Y]$.
\end{proof}

\noindent In the next proof we show that the OCD \eqref{eq:main-ODE} gives rise to sharpest decay in quadratic cost among processes evolving  in $\textrm{Tan}_{p_t,2} \Pi(\mu,\nu)$.

\begin{proof}[Proof of Theorem~\ref{theorem:sharp-descent} (Sharp descent and variational formulation)]
For any admissible velocity field 
$v=(v_1,v_2)\in {\ker(\PO_{p_t,X})\times\ker( \PO_{p_t,Y})} $, 
the instantaneous rate of change of the quadratic cost is
\[
\frac{d}{dt}\mathcal{C}(p_t)
=
\mathbb{E}_{p_t}[Y\cdot v_1 + X\cdot v_2].
\]
A direct computation shows that
\[
\mathcal L_{p_t}(v)
=\frac{1}{2}\left(
\mathbb E_{p_t}\big[\|v_1 - Y\|^2 + \|v_2 - X\|^2\big]
-
\mathbb E_{p_t}[\|Y\|^2 + \|X\|^2]\right),
\]
and the second term does not depend on $v$. Therefore minimizing
$\mathcal L_{p_t}(v)$ over the constraint ${\ker(\PO_{p_t,X})\times\ker( \PO_{p_t,Y})}$
is equivalent to minimizing
\[
v\mapsto
\mathbb E_{p_t}\big[\|v_1 - Y\|^2 + \|v_2 - X\|^2\big]
\quad\text{over } {\ker(\PO_{p_t,X})\times\ker( \PO_{p_t,Y})}.
\]
Since ${\ker(\PO_{p_t,X})\times\ker( \PO_{p_t,Y})}$ is a closed linear subspace of $L^2(p_t)$ (see Remark \ref{remark:kernel_space}), and 
$\mathcal{L}_{p_t}$ is strictly convex, continuous, and coercive, there exists a unique minimizer.
By Lemma~\ref{lemma:opt-marg-pres}, the unique minimizer is
\[
v_{1,t}^{\textrm{opt}} = Y - \mathbb{E}[Y\mid X],
\qquad
v_{2,t}^{\textrm{opt}} = X - \mathbb{E}[X\mid Y],
\]
which is precisely the OCD velocity field.
\end{proof}

\begin{comment}
\begin{proof}[Proof of Theorem \ref{theorem:sharp-descent} (Sharp Descent and Variational Formulation)]
Observe that 
\begin{eqnarray}
\mathcal{L}_{p_t}(v)&=&\int_{\mathbb{R}^{2n}}\left(-2y\cdot v_1 -2x\cdot v_2+\lVert v \rVert_2^2\right) p_t(dx,dy) \ .
\end{eqnarray}
However since the considered class of velocities preserves marginals, we get 
\begin{eqnarray}
v_{p_t}^{\textrm{opt}}&=&\argmin_{v\in (L^2(\mu,\mathbb{R}^n)\otimes L^2(\nu,\mathbb{R}^n))^\perp} \left(\int_{\mathbb{R}^{2n}}\left(\lVert y-v_1\rVert_2^2+\lVert x-v_2\rVert_2 ^2\right)p_t(dx,dy) \right) .
\end{eqnarray}
Following Lemma \ref{lemma:opt-marg-pres}, we get the optimal velocities given by Eqs.~\eqref{eq:v-opt} and \eqref{eq:w-opt}.
\end{proof}
\end{comment}
\begin{remark}
Note there is a similarity between the sharp decay property of OCD and Martingale series approximation employed in \cite{ruschendorf1985wasserstein} (see Lemma 9 and Theorem 10 in \cite{ruschendorf1985wasserstein} ). 
\end{remark}

\section{Characterization in Gaussian Setting}
\label{sec:special_examples}
We analyze the OCD dynamics for quadratic cost with Gaussian marginals and show when it coincides with OT. Let $\mu$ and $\nu$ be centered Gaussian measures with covariance matrices $\Sigma_\mu$ and $\Sigma_\nu$. The dynamics of correlation  matrix $J_t=\mathbb{E}[X_t\otimes Y_t]$, following $L^2$-OCD \eqref{eq:main-ODE}, reads
\begin{eqnarray}
\label{eq:riccati}
\partial_t J_t&=&\Sigma_\mu+\Sigma_\nu-J_t^T\Sigma_\mu^{-1}J_t-J_t\Sigma_\nu^{-1}J_t^T \ ,
\end{eqnarray}
due to the identities 
\begin{eqnarray}
\mathbb{E}_{p_t}[Y_t|X_t]=J_t^T\Sigma_\mu^{-1}X_t \ \ \ \textrm{and} \ \ \ \mathbb{E}_{p_t}[X_t|Y_t]=J_t\Sigma_\nu^{-1}Y_t \ ,
\end{eqnarray}
which results from $p_t$ being Gaussian, see \cite{conforti2023projected}. 
The matrix-Riccati equation \eqref{eq:riccati} admits many stationary solutions, since any bijection between $\mu$ and $\nu$ is a stationary solution of \eqref{eq:main-ODE}, as discussed in \cite{conforti2023projected}. However, as we justified in Proposition \ref{prop:pos}, starting with independent samples, i.e. $\mathbb{E}[X_0Y_0]=0$, $J_t$ remains symmetric positive semi-definite and its trace  increases monotonically. This restricts the stationary state and results in the OT solution if $\Sigma_\mu$ and $\Sigma_\nu$ commute. 
\begin{proposition}[OCD recovers OT for commuting Gaussian covariances]
\label{prop:OCD-Gaussian}
Let 
\[
\mu = \mathcal N(0,\Sigma_\mu), 
\qquad 
\nu = \mathcal N(0,\Sigma_\nu),
\]
with $\Sigma_\mu,\Sigma_\nu \succ 0$, and let $(X_t,Y_t)$ evolve according to the $L^2$-OCD dynamics. If the covariances commute, i.e.
\[
[\Sigma_\mu,\Sigma_\nu]=0,
\]
then $L^2$-OCD converges to the {optimal transport correlation}, namely
\[
{
J_t \;\longrightarrow\; J_\infty
= 
\Sigma_\mu^{1/2}
\big(\Sigma_\mu^{1/2}\Sigma_\nu \Sigma_\mu^{1/2}\big)^{1/2}
\Sigma_\mu^{1/2}
= J_{\mathrm{OT}}.
}
\]
\end{proposition}

\begin{proof}
When $[\Sigma_\mu,\Sigma_\nu]=0$, the two matrices are simultaneously diagonalizable:
\[
\Sigma_\mu = Q\,\mathrm{diag}(\sigma_{\mu,i})\,Q^\top,
\qquad
\Sigma_\nu = Q\,\mathrm{diag}(\sigma_{\nu,i})\,Q^\top,
\]
for some orthogonal $Q$.  
Under this change of coordinates, \eqref{eq:riccati} decouples component-wise
\[
\dot j_i 
= \sigma_{\mu,i} + \sigma_{\nu,i}
  - (\sigma_{\mu,i}^{-1}+\sigma_{\nu,i}^{-1})\,j_i^2,
\qquad j_i(0)=0.
\]
By direct computation we find that the equilibrium (given $j_i(0)=0$) is
\[
j_i^\star 
= \sqrt{\sigma_{\mu,i}\sigma_{\nu,i}}\ .
\]
Hence $J_t \to J_\infty = Q\,\mathrm{diag}(\sqrt{\sigma_{\mu,i}\sigma_{\nu,i}})\,Q^\top.$ On the other hand, the Gaussian OT cross--correlation is 
\[
J_{\mathrm{OT}}
=\Sigma_\mu^{1/2}
(\Sigma_\mu^{1/2}\Sigma_\nu \Sigma_\mu^{1/2})^{1/2}
\Sigma_\mu^{1/2},
\]
see e.g. \cite{gelbrich1990formula}. When $\Sigma_\mu$ and $\Sigma_\nu$ commute, we get
\[
J_{\mathrm{OT}}
= Q\,\mathrm{diag}(\sqrt{\sigma_{\mu,i}\sigma_{\nu,i}})\,Q^\top
= J_\infty.
\]
%Thus OCD recovers the OT correlation. 
\end{proof}
\begin{remark}
When the covariances commute (i.e., the Gaussian marginals share eigenvectors), OCD reduces to $n$ independent scalar Riccati flows, each converging to the correct cross--variance $\sqrt{\sigma_{\mu,i}\sigma_{\nu,i}}$. 
If the covariances do not commute, the symmetry of $J_t$ is preserved, preventing convergence to the generally non-symmetric OT solution.
\end{remark}
Finally we address the speed of convergence to the OT solution in the case of commutting covariance matrices. 
\begin{theorem}[Exponential convergence of OCD to OT in the commuting Gaussian case]\label{thm:expon-gauss}
Let $\mu=\mathcal N(0,\Sigma_\mu)$ and $\nu=\mathcal N(0,\Sigma_\nu)$ with
$\Sigma_\mu,\Sigma_\nu\succ0$ and $[\Sigma_\mu,\Sigma_\nu]=0$. 
Consider the $L^2$-OCD dynamics initialized with independent samples,
and let $J_t=\mathbb E[X_tY_t^\top]$ denote the cross--correlation.
Let
\[
J_{\textrm{OT}} \;=\; \Sigma_\mu^{1/2}\big(\Sigma_\mu^{1/2}\Sigma_\nu\Sigma_\mu^{1/2}\big)^{1/2}\Sigma_\mu^{1/2}
\]
be the Gaussian OT cross--correlation. 
Then $J_t\to J_{\textrm{OT}}$ exponentially fast: there exist constants 
$C>0$ and $\kappa>0$ such that
\[
\|J_t-J_{\textrm{OT}}\|_2 \;\le\; C\,e^{-2\kappa t}\qquad\text{for all }t\ge0,
\]
where $\|\cdot\|_2$ denotes the spectral norm. Expressing $\Sigma_\mu=Q\diag(\sigma_{\mu,i})Q^\top$ and
$\Sigma_\nu=Q\diag(\sigma_{\nu,i})Q^\top$ in a common orthonormal basis,
\[
\kappa \;=\; \min_{1\le i\le n}\Big(\sqrt{\tfrac{\sigma_{\nu,i}}{\sigma_{\mu,i}}}
+\sqrt{\tfrac{\sigma_{\mu,i}}{\sigma_{\nu,i}}}\Big)\;\ge\;2.
\]
\end{theorem}

\begin{proof}
Since $[\Sigma_\mu,\Sigma_\nu]=0$, there exists orthogonal $Q$ with
$\Sigma_\mu=Q\diag(\sigma_{\mu,i})Q^\top$ and
$\Sigma_\nu=Q\diag(\sigma_{\nu,i})Q^\top$.
The Riccati ODE
\[
\dot J_t=\Sigma_\mu+\Sigma_\nu - J_t^\top\Sigma_\mu^{-1}J_t - J_t\,\Sigma_\nu^{-1}J_t^\top
\]
preserves symmetry, so writing $J_t=Q\diag(j_i(t))Q^\top$ gives $n$ decoupled scalar ODEs
\[
\dot j_i(t)=\sigma_{\mu,i}+\sigma_{\nu,i} - (\sigma_{\mu,i}^{-1}+\sigma_{\nu,i}^{-1})\,j_i(t)^2
=\alpha_i\big((j_i^\star)^2 - j_i(t)^2\big),
\]
where $\alpha_i:=\sigma_{\mu,i}^{-1}+\sigma_{\nu,i}^{-1}$ and $j_i^\star:=\sqrt{\sigma_{\mu,i}\sigma_{\nu,i}}$.
The explicit solution is
\[
j_i(t)=j_i^\star\,\tanh\!\big(\alpha_i j_i^\star t \big).
\]
Hence
\[
|j_i(t)-j_i^\star|
= j_i^\star\Big|\,\tanh(\alpha_i j_i^\star t )-1\Big|
= \frac{2j_i^\star\,e^{-2(\alpha_i j_i^\star t )}}{1+e^{-2(\alpha_i j_i^\star t )}}
\;\le\; 2j_i^\star\,e^{-2\rho_i t},
\]
with $\rho_i:=\alpha_i j_i^\star=\sqrt{\sigma_{\nu,i}/\sigma_{\mu,i}}+\sqrt{\sigma_{\mu,i}/\sigma_{\nu,i}}$.
Therefore, for the spectral norm we have
\[
\|J_t-J_{\textrm{OT}}\|_2=\big\|Q\,\diag(j_i(t)-j_i^\star)\,Q^\top\big\|_2
\le \max_i |j_i(t)-j_i^\star|
\le 2\max_i j_i^\star\,e^{-2(\min_i\rho_i)t}.
\]
Setting $C:=2\max_i j_i^\star$ and $\kappa:=\min_i\rho_i$ gives the claim. 
Finally, $\rho_i\ge 2$ (by arithmetic-geometric mean inequality), hence $\kappa\ge 2$.
\end{proof}

\section{Numerical Algorithm through Opinion Dynamics}  % 1 page
\label{sec:numerics}
\noindent  Opinion dynamics at its core tries to answer how different agents shape their opinions in contact with each-other \cite{kolarijani2020macroscopic}. While seemingly unrelated to the Monge-Kantorovich problem, in this section, we leverage analogy between opinion dynamics and OCD to provide a novel, yet simple, numerical algorithm for the latter. The main idea is to first, split the data points into a certain number of clusters. Next, the conditional expectation is estimated in each cluster using an appropriate hypothesis class.
\subsection{Analogy with Opinion Dynamics}
\noindent Consider a population of agents indexed by $i\in\{1,...,N\}$. Suppose each person $i$ has certain two-dimensional position $P_i(t)\in\mathbb{R}^2$ as their opinion in time $t$. In the standard form of  the bounded interval Hegselmann-Krause consensus dynamics, e.g., see \cite{hegselmann2006truth,motsch2014heterophilious}, these positions relax towards a global consensus via 
\begin{eqnarray}
\dot{P}_i&=&\sum_{i\neq j}a_{ij}(P_i(t)-P_j(t)) \ ,
\end{eqnarray}
where $a_{ij}\ge 0$ captures the influence of other agents positions. We can decompose it as $a_{ij}=\phi(r_{ij})\alpha_{ij}$ where $r_{ij}=\lVert P_i-P_j\rVert_2^2$ is the Euclidean distance between the positions of two persons $i$ and $j$, and $\alpha$ a constant matrix. Based on $\alpha$, we can intuitively consider two idealized regimes:
\begin{enumerate}
\item The positions are completely independent from each-other and $\alpha$ becomes the identity matrix.
\item  The positions are perfectly tied to each-other and $\alpha$ becomes the exchange matrix $J$ (with ones on its anti-diagonal and zeros elsewhere).
\end{enumerate} 
The first one degenerates the system into independent positions. More interestingly, let us focus on the latter and denote $P=[P_X,P_Y]^T$, therefore the dynamics takes the form
\begin{eqnarray}
\dot{P}_{X,i}=\sum_{i\neq j}\phi(r_{ij})(P_{Y,i}(t)-P_{Y,j}(t)) \ \ \textrm{and} \ \ 
\dot{P}_{Y,i}=\sum_{i\neq j}\phi(r_{ij})(P_{X,i}(t)-P_{X,j}(t)) . \nonumber
\end{eqnarray}
If we assume $\phi(\ )$ is simply a step function i.e. it is one if $r\le \epsilon$ and zero otherwise (for $\epsilon>0$), we can further simplify the dynamics to 
\begin{eqnarray}
\dot{P}_{X,i}=\frac{1}{N_{\bar{I}^{X_i,\epsilon}}}\sum_{j\in\bar{I}^{X_i,\epsilon}} (P_{Y,i}(t)-P_{Y,j}(t)) \ \ \textrm{and} \ \ 
\dot{P}_{Y,i}=\frac{1}{N_{\bar{I}^{Y_i,\epsilon}}}\sum_{j\in\bar{I}^{Y_i,\epsilon}} (P_{X,i}(t)-P_{X,j}(t)) \ , \nonumber
\end{eqnarray}
where $\bar{I}^{X_i,\epsilon}$ and  $\bar{I}^{Y_i,\epsilon}$ are the set of all agents with $\lVert P_{X,i}-P_X\rVert_2^2\leq \epsilon^2$ and $\lVert P_{Y,i}-P_Y\rVert_2^2\leq \epsilon^2$, respectively, with the corresponding number of elements $N_{\bar{I}^{X_i,\epsilon}}$ and $N_{\bar{I}^{Y_i,\epsilon}}$. In the limit of $N\to\infty$ and $\epsilon\to 0$, it is suggestive to express the above dynamics in the statistical form. Treating $P_{X,i}(t)$ and $P_{Y,i}(t)$ as samples of random variables $X_t\in \mathcal{H}$ and $Y_t\in \mathcal{H}$, respectively, the dynamics takes the form
\begin{eqnarray}
\label{eq:OT-ODE}
\dot{X}_t=Y_t-\mathbb{E}[Y_t|X_t] \ \ \ \textrm{and} \ \ \
\dot{Y}_t=X_t-\mathbb{E}[X_t|Y_t]  \ ,
\end{eqnarray}
which is OCD \eqref{eq:main-ODE} for $L^2$-cost. From technical point of view, the discrete version is justified due to the approximation of conditional expectation by a discrete kernel \cite{nadaraya1964estimating}. Through the lens of opinion dynamics, the OCD describes a dynamics which is local in a sense that each agent is only influenced by agents in its vicinity, yet as discussed before, it is closely linked to the global optimization problem \eqref{eq:pi_opt}. 
\subsection{Non-parametric Monte-Carlo Algorithm} 
\noindent The analogy with opinion dynamics hints a numerical recipe for OCD. Conceptually, the parameter $\epsilon$ gives rise to formation of clusters in the cloud of $(X_t,Y_t)$ points. We consider two numerical schemes:
\begin{enumerate}
\item {\bf OCD-piecewise constant.} In each cluster, $X_t$ and $Y_t$ relax towards their locally averaged values, i.e. the conditional expectation is estimated to be constant in the cluster. 
\item {\bf OCD-piecewise linear.} In each cluster, the conditional expectation is estimated using linear regression, i.e. the joint distribution of each cluster is estimated by a Gaussian.
\end{enumerate}
Starting with $N_p$, i.i.d., samples $\hat{X}_{i,t_0}\sim \mu$ and $\hat{Y}_{i,t_0}\sim \nu$, we update the state of each sample according to discretized form of OCD. It is important to note that here we are focusing on an approximate solution of the optimization problem \eqref{eq:pi_opt} over atom-less distributions $\mu$ and $\nu$. An alternative interpretation, where the distributions are assumed to be sum of diracs (assignment problem) is not pursued here (see \cite{trigila2016data}). Therefore throughout what follows, we assume that there exists an underlying continuous map between the two measure spaces.
Three hyper-parameters control the discretization error: number of samples $N_p$, time step size $\Delta t$, and cluster cut-off $\epsilon$. Once these values are fixed, relying on Euler's scheme (see Runge-Kutta's scheme in Supplementary Information), each sample is updated according to 
%\begin{eqnarray}
%\hat{X}^{n+1}_i&=&\hat{X}^n_i+\hat{Y}^n_i\Delta t-\frac{1}{N_{\bar{I}_i^{\hat{X}^{n},\epsilon}}}\sum_{j\in \bar{I}_i^{\hat{X}^{n},\epsilon}}\hat{Y}^n_j\Delta t \\
%\textrm{and} \ \ \ \hat{Y}^{n+1}_i&=&\hat{Y}^n_i+\hat{X}^n_i\Delta t-\frac{1}{N_{\bar{I}_i^{\hat{Y}^{n},\epsilon}}}\sum_{j\in \bar{I}_i^{\hat{Y}^{n},\epsilon}}\hat{X}^n_j\Delta t \ ,
%\end{eqnarray}
\begin{eqnarray}
\hat{X}^{n+1}_i&=&\hat{X}^n_i+\nabla_x c(\hat{X}_i^n,\hat{Y}_i^n)\Delta t-\hat{\mathcal{K}}^n_X\Delta t \\
%-\frac{1}{N_{\bar{I}_i^{\hat{X}^{n},\epsilon}}}\sum_{j\in \bar{I}_i^{\hat{X}^{n},\epsilon}}\hat{Y}^n_j\Delta t \\
\textrm{and} \ \ \ %\hat{Y}^{n+1}_i&=&\hat{Y}^n_i+\hat{X}^n_i\Delta t-\frac{1}{N_{\bar{I}_i^{\hat{Y}^{n},\epsilon}}}\sum_{j\in \bar{I}_i^{\hat{Y}^{n},\epsilon}}\hat{X}^n_j\Delta t \ ,
\hat{Y}^{n+1}_i&=&\hat{Y}^n_i+\nabla_y c(\hat{X}_i^n,\hat{Y}_i^n)\Delta t-\hat{\mathcal{K}}^n_Y \Delta t \ ,
\end{eqnarray}
where superscript $n$ and $n+1$ denote the approximations at $t^n$ and $t^{n}+\Delta t$, respectively.  The functionals $\hat{\mathcal{K}}^n_{X,Y}(.)$ estimate conditional expectations with respect to $X$ and $Y$, respectively, and they are found depending on the following estimators. 
\begin{enumerate}
\item  {\bf OCD-piecewise constant.} The estimator is constant in each cluster (equivalent to using square kernel in the estimation of the conditional expectation). The expressions for $\hat{\mathcal{K}}^n_{X,Y}(.)$ are given in  Supplementary Information. 
%\begin{eqnarray}
%\hat{\mathcal{K}}^n_X=\frac{1}{N_{\bar{I}^{\hat{X}_i^{n},\epsilon}}}\sum_{j\in \bar{I}^{\hat{X}_i^{n},\epsilon}}\nabla_x c(\hat{X}^n_i,\hat{Y}^n_j) \ \ \
%\textrm{and} \ \ \ \hat{\mathcal{K}}^n_Y=\frac{1}{N_{\bar{I}^{\hat{Y}_i^{n},\epsilon}}}\sum_{j\in \bar{I}^{\hat{Y}_i^{n},\epsilon}}\nabla_y c(\hat{X}^n_j,\hat{Y}^n_i) \ .
%\end{eqnarray}
\item {\bf OCD-piecewise linear.} The estimator follows linear regression (accounts for using Gaussian kernel in the estimation of the conditional expectation). In other words here we look for $L^2$ projection of $\nabla_{x,y} c$ onto the span of linear functions \cite{klebanov2021linear}. The final expressions are given in Supplementary Information. 
\end{enumerate}
The overview of the algorithm is given in Supplementary Information. 
The convergence of the algorithm, in general, is controlled by $N_p, \Delta t, \epsilon$.  Assuming that the OCD initialized by samples of $\mu,\nu$ is well-posed, it is reasonable to consider fixed-order convergence with respect to $\Delta t$ (e.g. first-order for Euler time integration, see \cite{petzold1983automatic}). Also the error due to finite $N_p$ is expected to behave similar to conventional Monte-Carlo schemes \cite{kroese2013handbook}. However, the error dependency on the cut-off value $\epsilon$ is more delicate. 
Two asymptotic limits of $\epsilon\to 0$ and $\epsilon \to \infty$ can be intuitively understood. The limit $\epsilon\to 0$ accounts for no interaction between agents, i.e. each point forms its own cluster. Therefore the points remain frozen in time. On the other hand, $\epsilon\to \infty$ gives rise to perfect interaction among all agents, forming one giant cluster. In this case, the dynamics converges to the average value among all sample points, for the 1st order approximation,  and towards linear regression, for the 2nd order case. Obviously, the optimal value of $\epsilon$ needs to be tuned in-between, and is expected to depend on $N_p$. The transition between frozen dynamics in $\epsilon\to 0$ and homogenization $\epsilon\to \infty$ suggests a phase transition type behaviour, resulting from $\epsilon$. The transition point can serve as a proxy for the optimal cut-off value, as discussed in Supplementary Information.   \\ \ \\
Once the pairing between $\hat{X}_i$ and $\hat{Y}_i$ reaches its stationary state (according to a stopping criterion), the sampled points $(\hat{X}_i^s,\hat{Y}_i^s)_{i\in\{1,...,N_p\}}$ can be employed to estimate the Monge map $\hat{T}()$, using a suitable hypothesis class. We train a Neural Network (NN) with a few layers and $\tanh(.)$ as the activation function to approximate the map and denote it by $M_{X\rightarrow Y}$. As the loss function, we simply consider the expectation of $L^2$ point-wise error between NN's output $M_{X\rightarrow Y}(\hat X^*)$ and $\hat Y^*$, i.e. $ \mathrm{Loss}=\sum_{i=1}^{N_p} \lVert M_{X\rightarrow Y}(\hat X_i^*)- \hat Y_i^*\rVert_2^2/N_p$. 
%\begin{eqnarray}
%    \mathrm{Loss} &=&  \frac{1}{N_p}\sum_{i=1}^{N_p} \lVert M_{X\rightarrow Y}(\hat X_i^*)- \hat Y_i^*\rVert_2^2\ .
%\end{eqnarray}
%We deploy Adam's algorithm %\cite{diederik2014adam} with a learning %rate of $0.002$ and perform $10000$ %iterations to find the network weights.
%that has been used  to estimate the conditional expectation. We demonstrate this workflow in the next section, where several numerical examples are presented.   
%\subsection{Algorithm Overview and %Computational Complexity}
\subsection {Computational Complexity} 
\noindent  In the following, we provide complexity estimates of OCD. Note that the discussion is mainly based on heuristic arguments and rely 
on practical scaling observations rather than formal asymptotic analysis. For a given time step (iteration), the main computational task of the proposed OCD algorithm is the evaluation of the conditional expectation, whose estimation relies on forming clusters by finding neighbours of each point in a given distribution. For the latter, though non-exclusively, the Ball Tree method \cite{omohundro1989five} scales efficiently i.e. $\mathcal{O}(n N_p \log N_p)$ with number of particles $N_p$  and space dimension $n$.  Therefore, we expect that the OCD equipped with piecewise constant estimator for the conditional expectation to have the time complexity of $\mathcal{O}(n N_p \log N_p)$, since the number of ODEs is $2n$ and number of operations in order to compute the distance between each two particles is $\mathcal{O}(n)$.
However, since we deployed the direct solver in the computation of the conditional expectation with piecewise linear estimator, the corresponding OCD approach scales poorly with space dimension $n$ leading to the overall time complexity of $\mathcal{O}(n^3N_p \log N_p)$.
\\ \ \\
%Furthermore, the cost of OCD scales linearly with  $n=\dim(X)=\dim(Y)$, since the number of ODEs is $2n$ and number of operations in order to compute the distance between each two particles is $\mathcal{O}(n)$. 
In terms of memory consumption, since OCD, unlike linear programming, does not require computation of the distance matrix, its memory requirement is
linear with the number of particles (histogram-dimension) $N_p$. In terms of space dimension $n$, OCD becomes memory intensive at high $n$ since the deployed direct linear solver scales poorly in the high dimensional setting.
%Since we deployed the direct linear solver in the computation of the conditional expectation, the computational cost of OCD with the piecewise linear approximation scales poorly with dimension, i.e. $\mathcal{O}(n^3)$. However, the time complexity of piecewise constant estimate of conditional expectation is $\mathcal{O}(n)$.
%Therefore, we expect the cost of OCD with piecewise constant estimator for the conditional expectation scale linearly with  $n=\dim(X)=\dim(Y)$, since the number of ODEs is $2n$ and number of operations in order to compute the distance between each two particles is $\mathcal{O}(n)$.}
\\ \ \\
In order to estimate the total computational cost of OCD, including number of iterations required to reach stationary state, we  conducted numerical tests as shown in Fig.~\ref{fig:cost_mem_norm_m0v1_norm_m1v1}, where OCD runs between two normal distributions. The flops and storage requirements confirm our analysis on complexity and memory consumption. However, it is evident that OCD requires significantly more operations compared to the reference Earth Mover's Distance (EMD) method \cite{JMLR:v22:20-451}, if relatively small number of sample points (e.g. 100) is considered. This is mainly due to the fact that while the number of operations scales efficiently, i.e. with $\mathcal{O}(N_p \log N_p)$, the pre-factor which depends on the number of iterations required to reach stationary condition becomes dominant, if $N_p$ is relatively small. For $N_p>64'000$, we observe that OCD becomes more efficient than EMD. On the other hand, as expected, OCD with the piecewise linear approximation of the conditional expectation scales poorly with the dimension, i.e. $\mathcal{O}(n^{3.5})$, while the measured timing of OCD with the piecewise constant estimator of the conditional expectation is slightly larger than expected $\mathcal{O}(n)$. 
%We believe the slight discrepancy in the reported scaling is rooted in the overhead due to our naive implementation in Python. 
We performed this study using the Google's Cloud TPU v6e with 172.9 GB RAM.
%{\color{blue} stress, linear constant scaling with dimension, constant should be linear, linear should be $n^3$. TPU used.}   

\begin{figure}
  	\centering
   \begin{tabular}{cc}
\includegraphics[scale=0.63]{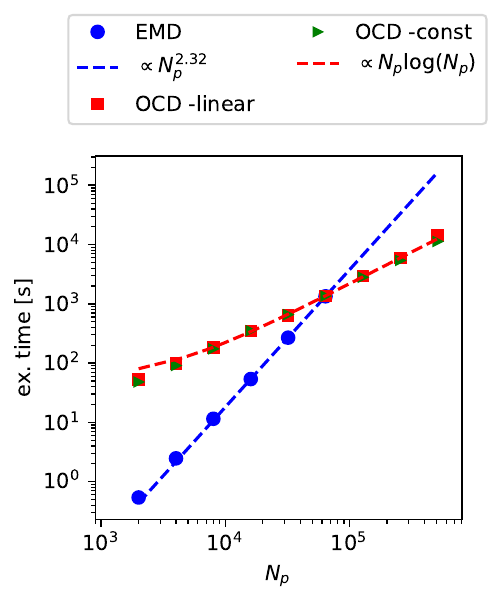}
&
\includegraphics[scale=0.63]{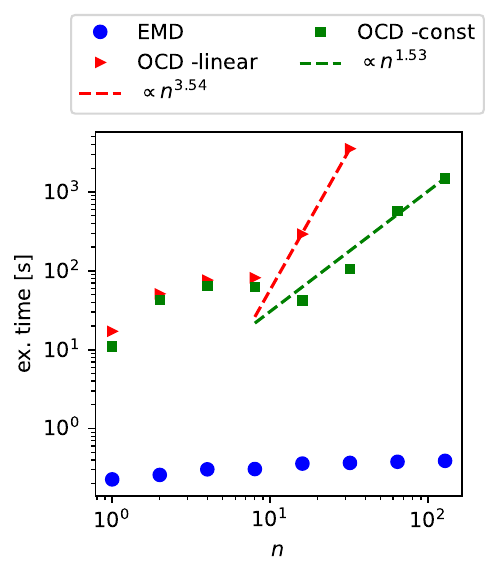}
\\
\includegraphics[scale=0.63]{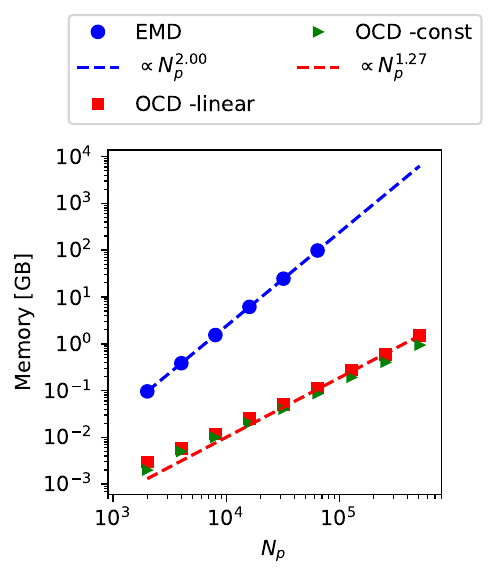}
&
\includegraphics[scale=0.63]{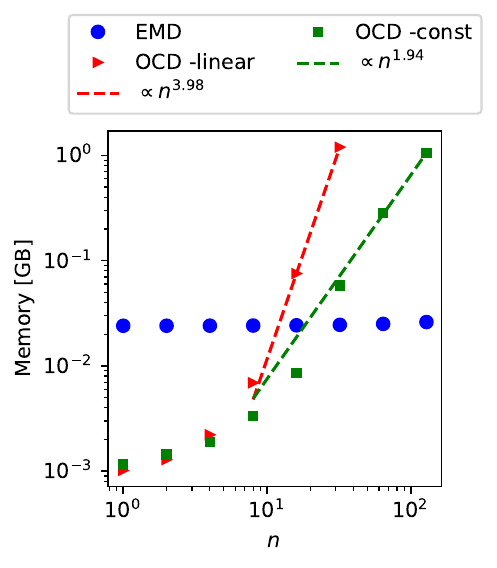}
\\
(a) $n=3$ & (b) $ N_p=1000$
   \end{tabular}
   \caption{Execution time (top) and memory consumption (down) of OCD-piecewise linear algorithm with Runge Kuta 4th order solver with $\Delta t=0.1$ against Earth Mover's Distance (EMD) method \cite{JMLR:v22:20-451} in learning the map between samples of $\mu=\mathcal N( 0, I)$ and $\nu=\mathcal N( 1, I)$ in (a) $n=3$ for a range of $N_p$ and (b) $n=1,...,128$ with $ N_p=1000$. Here we scale $\epsilon$ with a rule of thumb $\eps = 3 n {N_p}^{-1/4}/4$.}
   \label{fig:cost_mem_norm_m0v1_norm_m1v1}
\end{figure}

\section{Representative Results for $L^2$-Monge-Kantorovich} 
\label{sec:results}
\noindent In this section, we illustrate the accuracy and cost of the proposed OCD and highlight its capability in estimating the optimal map, along with Wasserstein distance, between two marginals in a few examples. Unless mentioned otherwise, by OCD we are referring to OCD-piecewise linear algorithm equipped with Runge-Kutta 4th order ODE solver.
We refer the reader to Supplementary Information for details on the choice of hyper-parameters such as $\epsilon$ and $\Delta t$, and comparison with benchmarks. All the performance studies are done on a single core and thread of Intel Core i7-8550U CPU with 1.80 GHz frequency.

% 2 pages
%\section{Some thoughts on $\epsilon_opt$} 
%Let us define
%\begin{eqnarray}
%g(x)&=&\int H(\epsilon-|x-y|)f_{N}(y)dy
%\end{eqnarray}
%where $f_N$ is standard normal distribution and $H(.)$ is Heaviside. We are interested in deflection point of $g$. 
\subsection{Clustering and Prototypical Examples}

\begin{figure}
  	\centering
\includegraphics[scale=0.03]{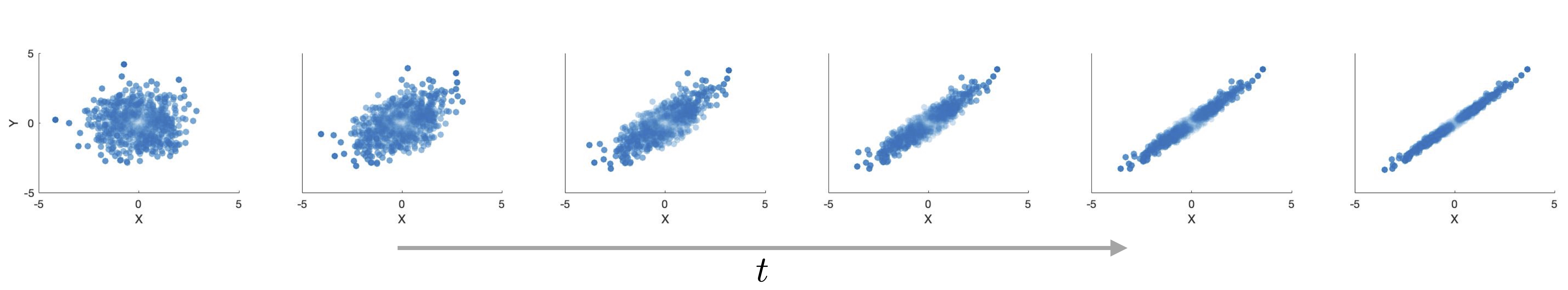}
\caption{Evolution of point particles under the coupling created by OCD. }
   \label{fig:clustering}
\end{figure}
\begin{figure}
  	\centering
\includegraphics[scale=0.06]{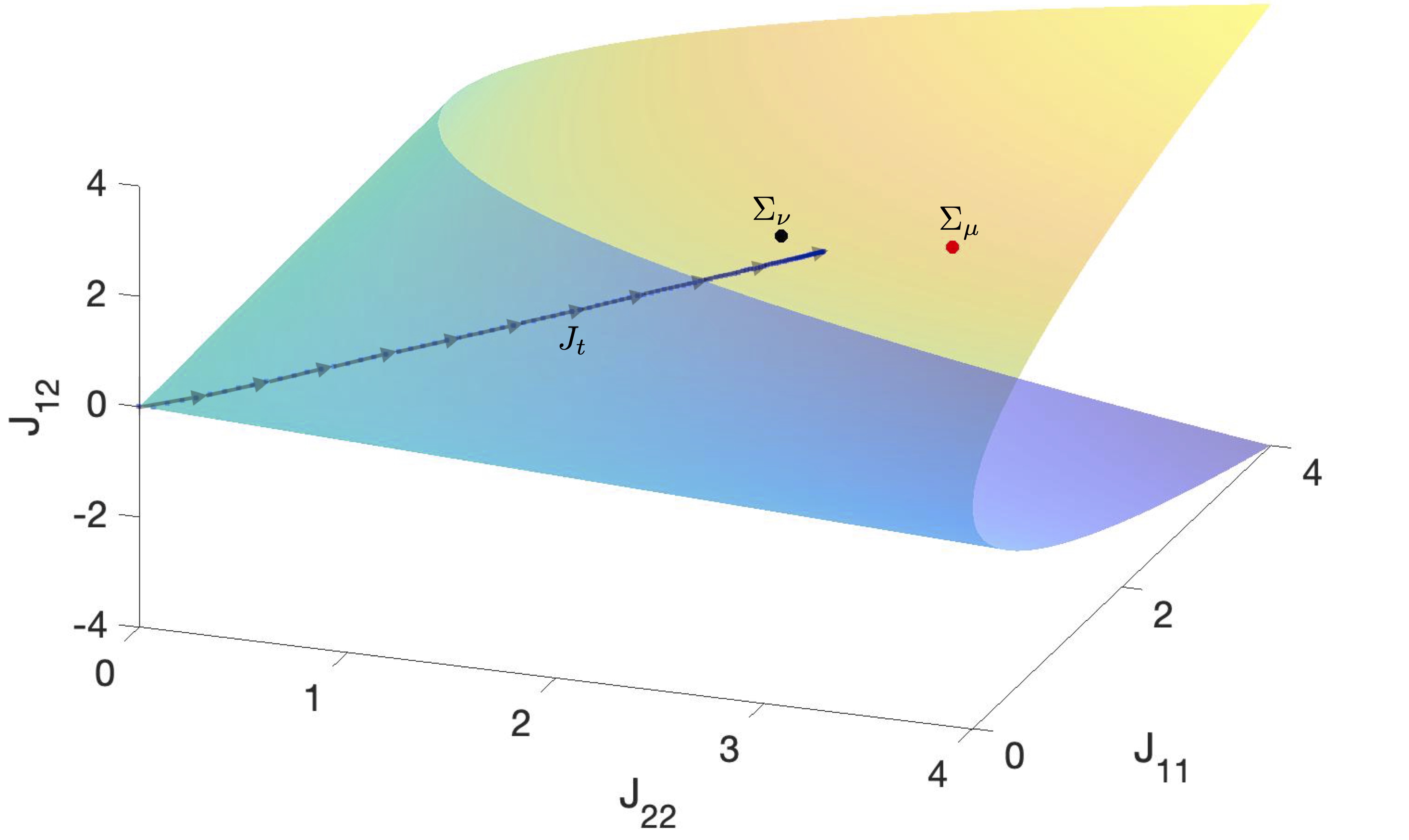}
\caption{Evolution of cross-correlation matrix, $J_t$, in the cone of symmetric positive semi-definite matrices. }
   \label{fig:psd_cone}
\end{figure}
%\begin{figure}[H]
%  %	\centering
%   \label{fig:densities_softmax}
%\includegraphics[scale=0.04]{Figures/densities.jpg}
%\caption{Probability densities. (a) Uniform; (b) Pushforward of uniform density by $T(x_i)=\nabla_{x_i} \log \left(\exp(x_1)+\exp(x_2)\right)$.}
%\end{figure}
\begin{figure}
  	\centering
\includegraphics[scale=0.025]{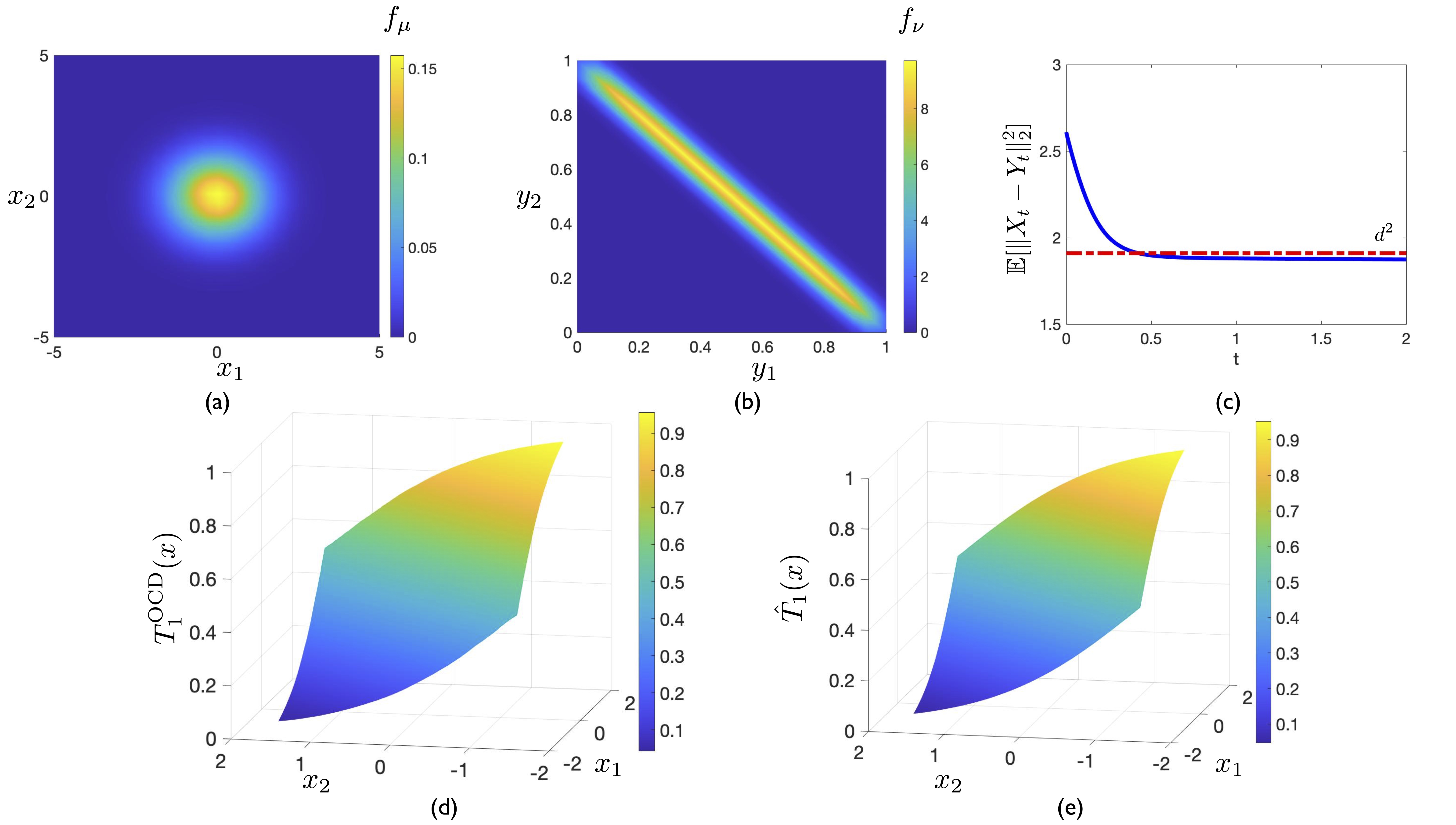}
\caption{Recovery of nonlinear Monge map between (a) normal distribution and (b) its push-forward by softmax map $T_i=\nabla_{x_i} \log \left(\exp(x_1)+\exp(x_2)\right)$; (c) Starting from independent samples, the pair $(X_t,Y_t)$ is evolved by OCD and $\mathbb{E}\lVert X_t-Y_t \rVert_2^2$ converges towards the Wasserstein distance $d^2$; (d) First component of the map generated by OCD; (e) First component of the exact Monge map.}
   \label{fig:softmax}
\end{figure}
\noindent First, we present and discuss three toy scenarios.
\begin{enumerate}
\item {\bf Particle Clustering.} Starting  from independent samples of two one-dimensional standard normal distributions, we track the particles evolved by $L^2$-OCD. The dynamics is illustrated in Fig.~\ref{fig:clustering}, where the  correlation between the data points monotonically increases in time. The Monge map, given by identity,  emerges from particle clusters as time progresses.
\item {\bf Symmetric Positive Semi-Definite  Cone.} As  a by-product of OCD, we justified in Proposition \ref{prop:spd}  that the dynamics ensures the cross-correlation matrix remains in the cone of symmetric positive semi-definite matrices. Here we verify this finding by numerical simulation, where the initial condition is product measure of two centered Gaussians with covariance matrices $\Sigma_\mu$ and $\Sigma_\nu$. As  shown in Fig.~\ref{fig:psd_cone}, OCD evolves the cross-correlation inside the cone and reaches the maximum on the curve connecting $\Sigma_\mu$ and $\Sigma_\nu$. Note that there exist interesting results on the geometry of the cross-correlation of Gaussians induced by the $L^2$-optimal transport and its connection to geometric average of $\Sigma_\mu$ and $\Sigma_\nu$, which we leave out of discussion for brevity, but refer the reader to \cite{bhatia2019bures}. 
\item {\bf Nonlinear Monge Map.} To demonstrate relevance of OCD in recovering nonlinear multi-dimensional maps, we consider a synthetic setting where the target distribution is chosen such that  the Monge map is known analytically. Consider the standard normal distribution over $\mathbb{R}^2$, denoted by $\mu$. Let the target distribution be the push-forward by softmax map $\nu=T\#\mu$, with $T=\nabla_x \log(\exp(x_1)+\exp(x_2))$. Since $T$ is gradient of a convex function, the Monge map between $\mu$ and $\nu$ is simply $T$. Depicted in Fig.~\ref{fig:softmax}, starting from independent $10^6$ samples of $\mu$ and $\nu$, particles pushed by OCD with $\epsilon=10^{-3}$ and $\Delta t=10^{-2}$ successfully recover the Monge map and accurately estimate 2-Wasserstein distance, as $t$ progresses. Note that the underestimation of the 2-Wasserstein distance is due to the fact that the marginals may not be preserved due to different sources of numerical errors, leading to, e.g. smaller variances of each marginal.
\end{enumerate}

\subsection{Distribution Learning and Sampling}
\label{sec:density_estimation}
\noindent We deploy the proposed OCD (with $L^2$-cost) to learn transport map between the normal source and a target density $\in$ $\{\textrm{Banana}, \textrm{Funnel}, \textrm{Swiss roll}\}$ \cite{baptista2023representation}. We use $10^4$ samples of each marginal and carry out OCD to find the samples of the optimal joint density on $(\hat X^*,\hat Y^*)$. Afterwards, in order to learn the map via the paired data points, i.e. $M_{X\rightarrow Y}:\hat X^* \rightarrow \hat Y^*$, we deploy NN with $4$ layers, each with $100$ neurons, 
%, and $\tanh(.)$ as the activation function to approximate $M_{X\rightarrow Y}$. As the loss function, we simply consider the expectation of $L_2^2$ point-wise error between the NN's output $M_{X\rightarrow Y}(\hat X^*)$ and the data points $\hat Y^*$, i.e.
%\begin{flalign}
%    \mathrm{Loss} =  \sum_{i=1}^{N_p} ||M_{X\rightarrow Y}(\hat X_i^*)- \hat Y_i^*||_2^2/N_p~.
%\end{flalign}
where Adam's algorithm \cite{diederik2014adam} with a learning rate of $0.002$ and $10^4$ iterations are utilized to find the NN weights.
\\ \ \\
For testing, we generate $10^6$ normally distributed samples, i.e. $X^\text{test} \sim \mathcal{N}( 0, I_{n\times n})$ and feed them to the NN to find $M_{X \rightarrow Y}(X^\text{test})$. As shown in Fig.~\ref{fig:density_est}, OCD is capable of recovering the target density with a reasonable accuracy. However, we note that the estimated densities by OCD are slightly smoothed out compared to the ground-truth. This error could be related to the current approach of estimating the conditional expectation, which resembles Kernel Density Estimation with known smoothing effects.  In Supplementary Information, we provide a study of hyper-parameters and comparison with Adaptive Transport Maps (ATM) \cite{baptista2023representation} for each test case. 
%Also for computational cost Earth Mover's Distance (EMD) \cite{JMLR:v22:20-451} as well as 
We point out that once the map is learned, computing the new samples is straight-forward, given that the network only performs simple matrix-vector multiplication on the samples of the standard normal distribution. Hence the proposed workflow can efficiently serve as a sampling approach for complex distributions (such as Swiss roll), with ramifications for sampling algorithms based on measure transport \cite{sun2024dynamical,papamakarios2021normalizing}.
\begin{figure}
  	\centering
   \begin{tabular}{ccccc}
   \includegraphics[scale=0.3]{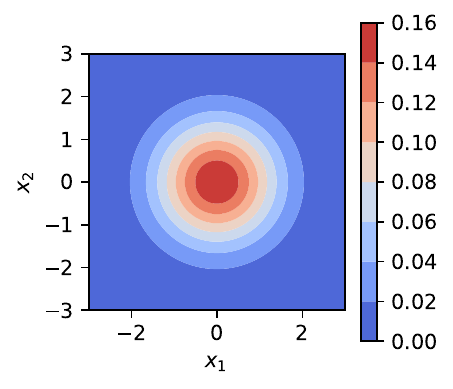}
    &
   \rotatebox{90}{\hspace{0.5cm} True}
   &
   \includegraphics[scale=0.3]{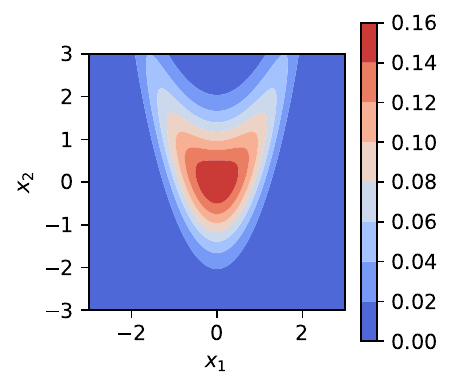}
   &
\includegraphics[scale=0.33]{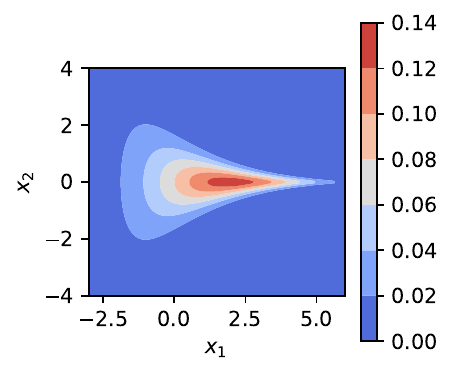}
&
\includegraphics[scale=0.3]{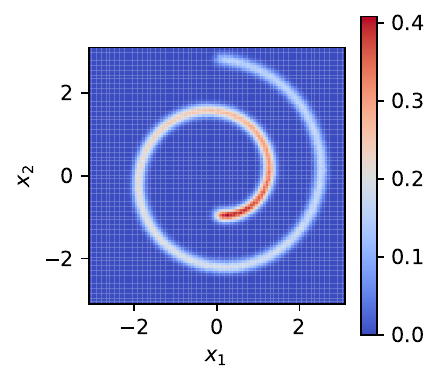}
\\
& \includegraphics[scale=0.25]{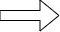}
\\
\includegraphics[scale=0.30]{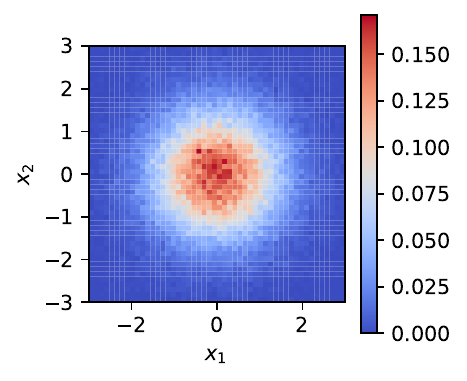}
&
\rotatebox{90}{\hspace{0.5cm} OCD}
&
\includegraphics[scale=0.3]{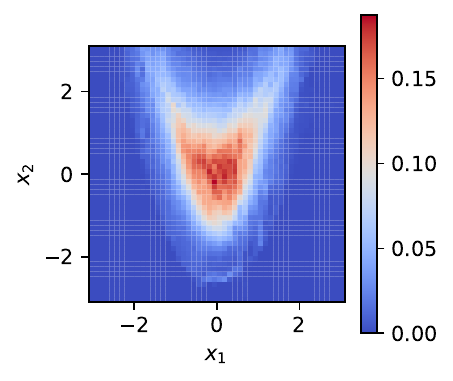}
&
\includegraphics[scale=0.33]{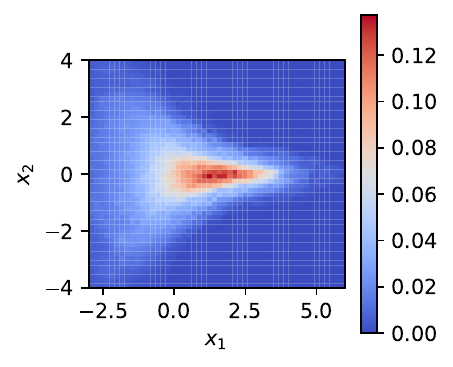}
&
\includegraphics[scale=0.3]{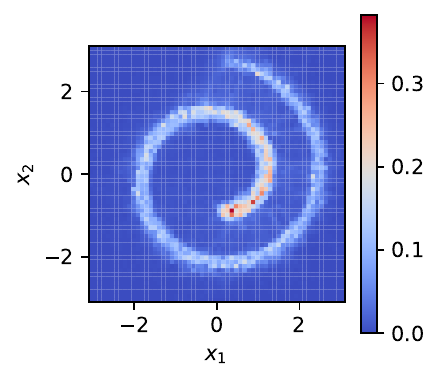}
\\
Normal & & Banana & Funnel & Swiss roll
   \end{tabular}
   \caption{Estimating density of Banana, Funnel, and Swiss roll distributions using OCD. Here, first we find optimal pairs among $10^4$ samples of normal and target density using OCD. We train NN on the sorted data and generate new samples of the target density using the learned map and $10^6$ normally distributed particles. The true marginals are depicted on top and histogram of estimated marginals on bottom.}
      \label{fig:density_est}
\end{figure}

\subsection{Color Interpolation}

\noindent As the final example, we consider another application of optimal transport in interpolation between color distribution of different images. Given an image with dimension $(N_x,N_y,3)$, the pixel colors can be considered as the random variable, leading to $N_x\times N_y$ samples for $X$ with $\dim(X)=3$, indicating red/green/blue channels. We take a low resolution picture of the ocean taken in day and sunset time for training, as depicted in Fig.~\ref{fig:tr_ocean} (see examples of POT library \cite{JMLR:v22:20-451} for details).
\begin{figure}
  	\centering
   \begin{tabular}{cc}
\includegraphics[scale=0.3]{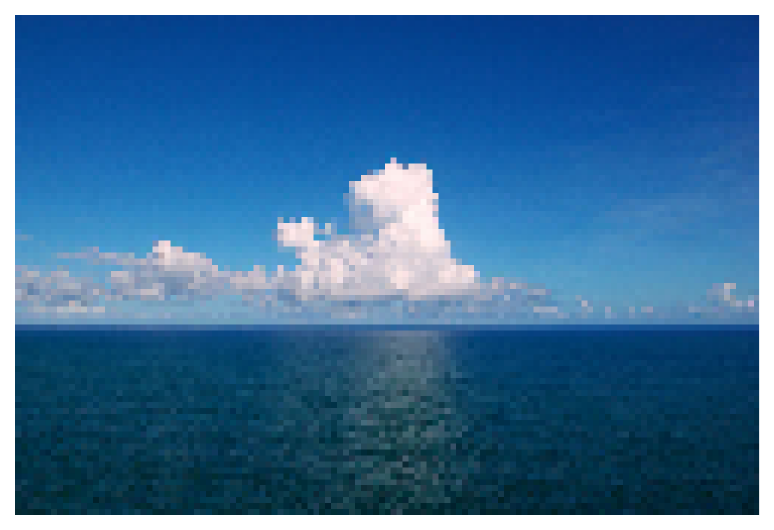}
&
\includegraphics[scale=0.3]{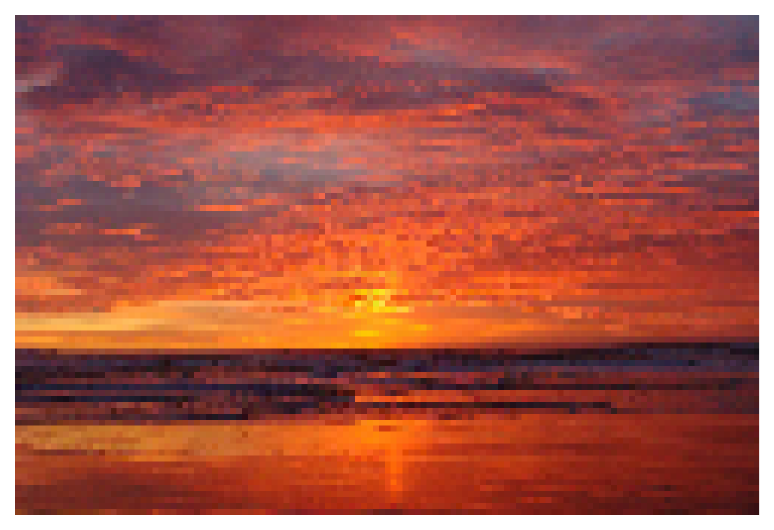}
\\
$X_0$ & $Y_0$
\end{tabular}
\caption{Low resolution ($84\times 125
$ pixels) pictures of ocean in day and sunset time used for training.}
\label{fig:tr_ocean}
\end{figure}
\noindent Given $N_p=10'500$ training samples of these two pictures, we learn the map $M_{X\rightarrow Y}$ using paired samples found from OCD (with $L^2$-cost) and training NN similar to the one described in \cref{sec:density_estimation}. We employ $\Delta t=0.1$ and $\epsilon=\epsilon_{\textrm{max}}(N_{\textrm{clusters}}/N_p>0.9)$. We repeat the same procedure for EMD to benchmark, and found that while EMD takes about $2.6$ GB of memory, OCD only consumes $115$ MB.  Once the NN map is learned, we test the map by feeding the NN with the full resolution of day picture with $670\times 1000$ pixels, where we see a good agreement with the EMD solution.  We further test the learned OCD map on two new testing picture with 
$548\times 1024$ and $1200\times 1920$ pixels. As shown in Fig.~\ref{fig:interpolate_color_ocean}, the trained OCD map is capable of transforming the testing picture into sunset. 

\subsection{Further numerical studies}
In Supporting Information, we have provided more detailed numerical studies for the reader as a reference. This includes error analysis with respect to the OCD's free parameters, and further comparisons against other benchmark methods including the invertible map \cite{baptista2023representation} and Neural optimal transport \cite{korotin2023neural}. Furthermore, we show solution of the OCD with the $L^p$-cost function as well as OT map between two 10-dimensional multivariate normal distributions.

\begin{figure}
  	\centering
   \begin{tabular}{cc}
   \rotatebox{90}{\ \ \ \ \ OCD} & 
\includegraphics[scale=0.58]{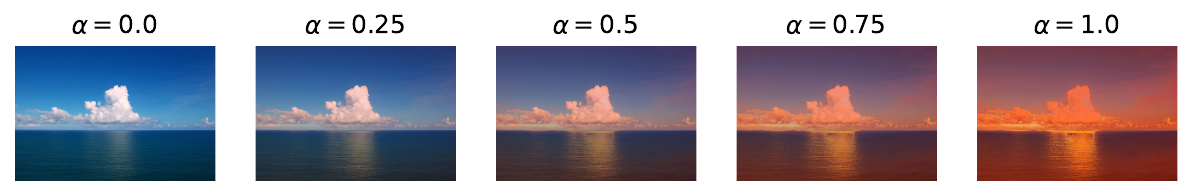}
\\
\rotatebox{90}{\ \ \ \ \ EMD} & 
\includegraphics[scale=0.58, trim={0 0 0 0.6cm}, clip]{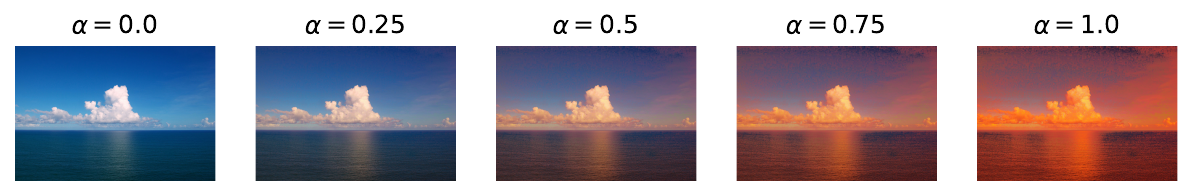}
\\
\rotatebox{90}{\hspace{-1.5cm} Test \#1: OCD}
&
\includegraphics[scale=0.58, trim={0 0 0 0.6cm}, clip]{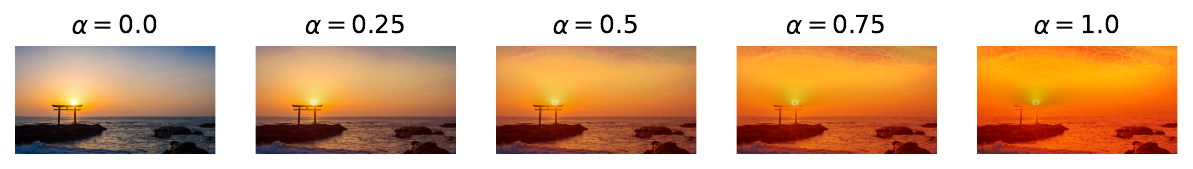}
\\
\rotatebox{90}{\ \ \ \ \ }
&
\includegraphics[scale=0.56, trim={0 0 0 0.6cm}, clip]{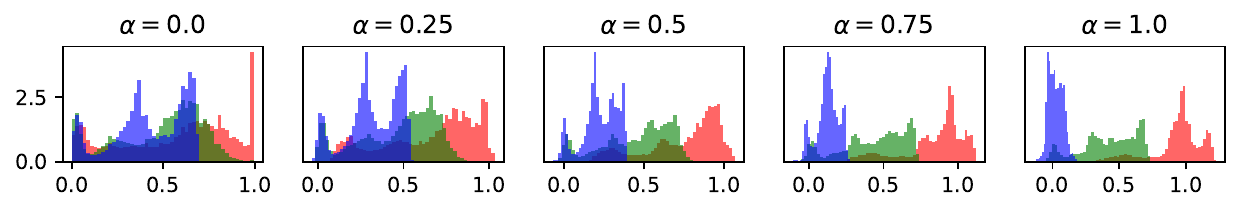}
\\
\rotatebox{90}{\hspace{-1.5cm} Test \#2: OCD}
&
\includegraphics[scale=0.58, trim={0 0 0 0.6cm}, clip]{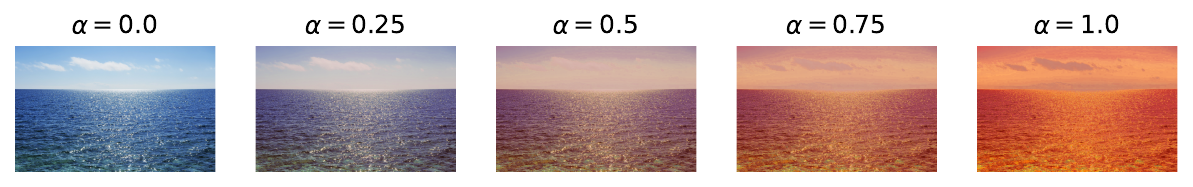}
\\
\rotatebox{90}{\ \ \ \ \ }
&
\includegraphics[scale=0.56, trim={0 0 0 0.6cm}, clip]{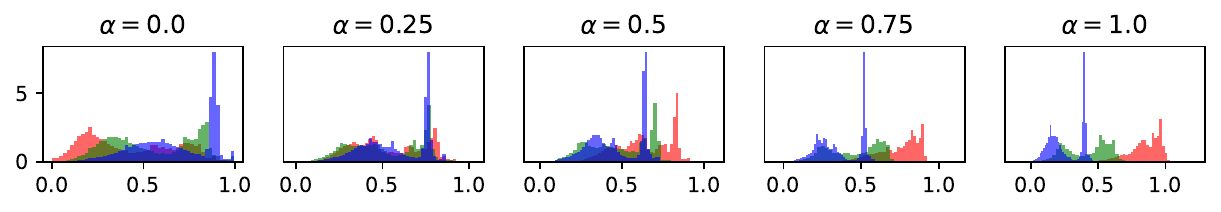}
\end{tabular}
\caption{Interpolating day and sunset pictures of ocean. The interpolation is done using the learned map $M_{X\rightarrow Y}(.)$ via $(1-\alpha)X + \alpha M_{X\rightarrow Y}(X)$. The bottom two rows show the transport of colors for two test picture.  The histograms show the distribution of red, green, and blue colors for the corresponding image.
}
\label{fig:interpolate_color_ocean}
\end{figure}

\section{Discussion} %1 page
\subsection{Concluding Remarks}
 Conceptually, what makes the OCD approach particularly appealing is that it is at cross-road of different topics: 
\begin{enumerate}
\item The original problem of Monge-Kantorovich optimization, constrained on a set of couplings, is reduced to a dynamics whose evolution is controlled by the conditional expectation. The latter is the solution of the regression problem, and thereby, the OCD provides a direct link between Bayesian regression and the optimal transportation. 
\item Using basic interaction rules, the dynamics gives a particle-type relaxation of independent samples, seeking their optimal coupling. The notion of clustering, which arises from this coupling, resonates with models of opinion dynamics and offers a method for obtaining sparse representations of complex, high-dimensional datasets.
\item  The OCD framework has potential applications as the foundation for generative models. When combined with supervised learning, the learned maps could be employed to generate new datasets.
\item Finally, the dynamics of OCD can be interpreted as a form of projected gradient descent, providing a novel approach through which optimization techniques constrained on a set of distributional coupling can be constructed.
\end{enumerate}
From numerical point of view, the model has favorable scaling behaviour. In the most basic form, it scales nearly linear with number of data points as well as dimension of distributional support, making OCD competitive in comparison to the state-of-the-art random map algorithms as well as direct linear programming approach. This powerful computational efficiency enables us to treat distributional learning problems over more complex and higher dimensional settings. \\ \ \\
A crucial structural property of OCD, initialized by independent samples, is that the
induced correlations remain symmetric and positive semidefinite. 
While this constitutes an interesting feature for randomized transport maps, it 
also implies that OCD, in its current deterministic formulation, does not in 
general recover the optimal coupling whenever the latter exhibits intrinsic 
asymmetries in its transport structure.  In this sense, the symmetry constraint acts as an implicit bias, different in nature from the entropic bias induced 
by Schrödinger bridge or Sinkhorn-type methods 
(e.g.\ \cite{leonard2013survey,cuturi2013sinkhorn,chen2021stochastic}). 
Whereas entropic algorithms favor couplings close to product measures, the OCD 
dynamics bias the evolution toward symmetric correlation structures.\\ \ \\
The instability of suboptimal stationary 
solutions suggests a natural direction for relaxing this constraint: introducing 
controlled randomization into the dynamics, to explore the coupling space 
more flexibly. We anticipate that stochastic perturbations, e.g. in the form of annealing of the projected Langevin dynamics \cite{conforti2023projected} 
may allow OCD-type flows to escape symmetric couplings and reach more general 
stationary couplings. A systematic theoretical characterization of the marginal 
distributions and structural conditions under which OCD recovers the optimal 
transport map remains an open research direction.

\subsection{Limitations} The current treatment of conditional expectation exhibits diffusive behaviour which can deteriorate  marginal distributions with sharp features. More elaborated approaches, e.g. deployment of Gaussian Processes or Neural Nets as hypothesis class of the conditional expectation, may alleviate this issue, at least to some degree, though likely at the cost of increased computational complexity. 
In addition, moment constraints of the marginal distributions can be leveraged to enforce the marginal densities of OCD to remain close to the initial condition, and hence reducing the numerical dispersion. 
%maximum cross entropy with moment constraint may be applied to bring the marginals of OCD closer to the true density, i.e. taking the OCD results as the prior joint density and maximize cross entropy from true density while matching the empirically measured moments of target marginals.
Another issue that requires further investigation is the optimal choice of $\epsilon$. Although we carried out extensive numerical tests, see Supplementary Information, we believe computing the conditional expectation efficiently is a separate task by itself.
These trade-offs between accuracy and efficiency are interesting areas for future research on OCD.  
From a theoretical standpoint, our understanding of the underlying dynamics remains incomplete. While we provided some important theoretical results on OCD, questions such as the well-posedness of OCD and its convergence towards optimal solution are still unresolved and require further exploration. 
\subsection{Broader Context} The presented dynamics, along with devised non-parametric algorithm, gives a means to pair up distinct data-sets, through a dynamics whose increment is a functional of the distribution. This is a prototype of McKean-Vlasov processes, which offer further generalizability compared to the conventional regression, due its dependency on the distribution. We believe this development holds implications which could go beyond the context of optimal transport, specifically given recent attention towards McKean-Vlasov universal approximators emerging in data-driven trainable models \cite{yang2024neural,atanackovic2024meta}. 
%\section{Supplementary Information}

\section{Acknowledgments}
MS thanks Andreas Adelmann for his support and constructive feedback.
PME was supported
by the ERC grant TRUST-949796 and the NSERC Discovery grant RGPIN-2025-06544.

\bibliographystyle{siam}
\bibliography{refs}
%\bibliographystyle{siamplain}
%\bibliography{references}

\appendix

\section{Numerical Scheme and Solution Algorithm}
\label{sec:algo}
\noindent The algorithmic steps of OCD consist of estimating the conditional expectation, along with an appropriate time integration scheme. Explicit formulas are given for both in the following. 

\subsection {Estimation of Conditional Expectation}
\begin{enumerate}
\item Piecewise constant estimation follows simple averaging per cluster, which gives
\begin{eqnarray}
\hat{\mathcal{K}}_X(\hat{X}_i^n|\hat{X}^n,\hat{Y}^n)&=&\frac{1}{N_{\bar{I}^{\hat{X}_i^{n},\epsilon}}}\sum_{j\in \bar{I}^{\hat{X}_i^{n},\epsilon}}\nabla_x c(\hat{X}^n_i,\hat{Y}^n_j) \label{eq:cond-const-x} \\
\textrm{and} \ \ \
\hat{\mathcal{K}}_Y(\hat{Y}_i^n|\hat{X}^n,\hat{Y}^n)&=&\frac{1}{N_{\bar{I}^{\hat{Y}_i^{n},\epsilon}}}\sum_{j\in \bar{I}^{\hat{Y}_i^{n},\epsilon}}\nabla_y c(\hat{X}^n_j,\hat{Y}^n_i) \label{eq:cond-const-y}\ .
\end{eqnarray}
\item For the piecewise linear estimation, first, let us define the average values per cluster
\begin{eqnarray}
\hat{m}^n_X=\frac{1}{N_{\bar{I}^{\hat{X}_i^{n},\epsilon}}}\sum_{j\in \bar{I}^{\hat{X}_i^{n},\epsilon}}\nabla_x c(\hat{X}^n_j,\hat{Y}^n_j) \ \ \ \textrm{and} \ \ \ \hat{m}^n_Y=\frac{1}{N_{\bar{I}^{\hat{Y}_i^{n},\epsilon}}}\sum_{j\in \bar{I}^{\hat{Y}_i^{n},\epsilon}}\nabla_x c(\hat{X}^n_j,\hat{Y}^n_j) \ . \nonumber
\end{eqnarray}
Accordingly, we get the correlation matrices
\begin{eqnarray}
\hat{\Sigma}^n_{XX}&=&\frac{1}{N_{\bar{I}^{\hat{X}_i^{n},\epsilon}}}\sum_{j\in \bar{I}^{\hat{X}_i^{n},\epsilon}}\left(\nabla_x c(\hat{X}^n_j,\hat{Y}^n_j)-\hat{m}^n_X\right)\otimes \left(\nabla_x c(\hat{X}^n_j,\hat{Y}^n_j)-\hat{m}^n_X\right)\ ,  \nonumber \\
\hat{\Sigma}^n_{YY}&=&\frac{1}{N_{\bar{I}^{\hat{Y}_i^{n},\epsilon}}}\sum_{j\in \bar{I}^{\hat{Y}_i^{n},\epsilon}}\left(\nabla_y c(\hat{X}^n_j,\hat{Y}^n_j)-\hat{m}^n_Y\right)\otimes \left(\nabla_y c(\hat{X}^n_j,\hat{Y}^n_j)-\hat{m}^n_Y\right)\ ,  \nonumber \\
\hat{\Sigma}^n_{XY}&=&\frac{1}{N_{\bar{I}^{\hat{X}_i^{n},\epsilon}}}\sum_{j\in \bar{I}^{\hat{X}_i^{n},\epsilon}}\left(\nabla_x c(\hat{X}^n_j,\hat{Y}^n_j)-\hat{m}^n_X\right)\otimes \left(\nabla_y c(\hat{X}^n_j,\hat{Y}^n_j)-\hat{m}^n_Y\right)  \nonumber \\
\textrm{and} \ \ \ \hat{\Sigma}^n_{YX}&=&\frac{1}{N_{\bar{I}^{\hat{Y}_i^{n},\epsilon}}}\sum_{j\in \bar{I}^{\hat{Y}_i^{n},\epsilon}}\left(\nabla_x c(\hat{X}^n_j,\hat{Y}^n_j)-\hat{m}^n_X\right)\otimes \left(\nabla_y c(\hat{X}^n_j,\hat{Y}^n_j)-\hat{m}^n_Y\right),  \nonumber
\end{eqnarray}
leading to
\begin{eqnarray}
\hat{\mathcal{K}}_X(\hat{X}_i^n|\hat{X}^n,\hat{Y}^n)&=&\hat{m}^n_Y+(\hat{\Sigma}_{XY}^n)^T(\hat{\Sigma}_{XX}^n)^{-1}(\hat{X}^n_i-\hat{m}^n_X)  \nonumber \\
\textrm{and} \ \ \ \hat{\mathcal{K}}_Y(\hat{Y}_i^n|\hat{X}^n,\hat{Y}^n)&=&\hat{m}^n_X+(\hat{\Sigma}_{YX}^n)^T(\hat{\Sigma}_{YY}^n)^{-1}(\hat{Y}^n_i-\hat{m}^n_Y)  \nonumber \ .
\end{eqnarray}
If the number of particles in a cluster drop below two, the correlation matrix becomes singular. In order to ensure that the matrices $\hat{\Sigma}_{XX,YY}^n$ remain invertible, they are regularized by $\hat{\epsilon}>0$, as the following 
\begin{eqnarray}
\hat{\mathcal{K}}^{\hat{\epsilon}}_X(\hat{X}_i^n|\hat{X}^n,\hat{Y}^n)&=&\hat{m}^n_Y+(\hat{\Sigma}_{XY}^n)^T(\hat{\Sigma}_{XX}^n+\hat{\epsilon}I_X)^{-1}(\hat{X}^n_i-\hat{m}^n_X) \label{eq:cond-linear-x} \nonumber \\
 && \\
\textrm{and} \ \ \ \hat{\mathcal{K}}^{\hat{\epsilon}}_Y(\hat{Y}_i^n|\hat{X}^n,\hat{Y}^n)&=&\hat{m}^n_X+(\hat{\Sigma}_{YX}^n)^T(\hat{\Sigma}_{YY}^n+\hat{\epsilon}I_Y)^{-1}(\hat{Y}^n_i-\hat{m}^n_Y) \label{eq:cond-linear-y}, \nonumber \\
\end{eqnarray}
where $I_{X,Y}$ are identity matrices with the same size as $\hat{\Sigma}_{XX,YY}^n$, respectively \cite{thrampoulidis2015regularized}.   
\end{enumerate}
\subsection {Time Discretization}
For the fourth-stage Runge-Kutta, we evaluate the slopes
\begin{eqnarray}
{k^X_{1,i}}&=&\nabla_x c (\hat{X}_i^{n},\hat{Y}_i^{n})-\hat{\mathcal{K}}_X(\hat{X}_i^n|\hat{X}^n,\hat{Y}^n)\label{eq:st1-x}  \\
\textrm{and} \ \ \ 
k^Y_{1,i}&=&\nabla_y c (\hat{X}_i^{n},\hat{Y}_i^{n})-\hat{\mathcal{K}}_Y(\hat{Y}_i^n|\hat{X}^n,\hat{Y}^n)   \label{eq:st1-y}
\end{eqnarray}
at the beginning,
\begin{eqnarray}
k_{2,i}^X&=&\nabla_x c (\hat{X}_i^{k_1},\hat{Y}_i^{k_1})-\hat{\mathcal{K}}_X(\hat{X}_i^{k_1}|\hat{X}^{k_1},\hat{Y}^{k_1}) \label{eq:st2-x} \\
\textrm{and} \ \ \ 
k_{2,i}^Y&=&\nabla_y c (\hat{X}_i^{k_1},\hat{Y}_i^{k_1})-\hat{\mathcal{K}}_Y(\hat{Y}_i^{k_1}|\hat{X}^{k_1},\hat{Y}^{k_1})\, \label{eq:st2-y}
\end{eqnarray}
with $\hat{X}_i^{k_1}=\hat{X}_i^n+k_{1,i}^X{\Delta t}/{2}$ and $\hat{Y}_i^{k_1}=\hat{Y}_i^n+k_{1,i}^Y{\Delta t}/{2}$, at the first mid-point,
\begin{eqnarray}
k_{3,i}^X&=&\nabla_x c (\hat{X}_i^{k_2},\hat{Y}_i^{k_2})-\hat{\mathcal{K}}_X(\hat{X}_i^{k_2}|\hat{X}^{k_2}, \hat{Y}^{k_2})  \label{eq:st3-x} \\
\textrm{and} \ \ \ k_{3,i}^Y&=&\nabla_y c(\hat{X}_i^{k_2},\hat{Y}_i^{k_2})-\hat{\mathcal{K}}_Y(\hat{Y}_i^{k_2}|\hat{X}^{k_2}, \hat{Y}^{k_2}) \, \label{eq:st3-y}
\end{eqnarray}
with $\hat{X}_i^{k_2}=\hat{X}_i^n+k_{2,i}^X{\Delta t}/{2}$ and $\hat{Y}_i^{k_2}=\hat{Y}_i^n+k_{2,i}^Y{\Delta t}/{2}$, at the second mid-point, and
\begin{eqnarray}
k_{4,i}^X&=&\nabla_x c (\hat{X}_i^{k_3},\hat{Y}_i^{k_3})-\hat{\mathcal{K}}_X(\hat{X}_i^{k_3}|\hat{X}^{k_3}, \hat{Y}^{k_3}) \label{eq:st4-x} \\
\textrm{and} \ \ \ k_{4,i}^Y&=&\nabla_y c (\hat{X}_i^{k_3},\hat{Y}_i^{k_3})-\hat{\mathcal{K}}_Y(\hat{Y}_i^{k_3}|\hat{X}^{k_3}, \hat{Y}^{k_3}) \, \label{eq:st4-y}
\end{eqnarray}
with $\hat{X}_i^{k_3}=\hat{X}_i^n+k_{3,i}^X{\Delta t}$ and $\hat{Y}_i^{k_3}=\hat{Y}_i^n+k_{3,i}^Y{\Delta t}$, at the end-point, where for notation brevity we omit the dependency on the regularizer $\hat{\epsilon}$. The update for full $\Delta t$ then is given by weighted average of all four slopes
\begin{eqnarray}
\hat{X}_i^{n+1}&=&\hat{X}_i^n+\frac{1}{6}\left(k_1^X+2k_2^X+2k_3^X+k_4^X\right) \\
\textrm{and} \ \ \ \hat{Y}_i^{n+1}&=&\hat{Y}_i^n+\frac{1}{6}\left(k_1^Y+2k_2^Y+2k_3^Y+k_4^Y\right) \ .
\end{eqnarray}
See \cref{alg:RK4_OCD} for summary of $L^2$-OCD computational step. 

\begin{algorithm}
 \caption{Main steps of the OCD algorithm for $L^2$-cost, using piecewise linear estimation of the conditional expectation and fourth-order Runge-Kutta.}
   \label{alg:RK4_OCD}
\begin{algorithmic}
\State{Input $N_p$ samples of $(\hat{X}^0,\hat{Y}^0)$ }
\State{Set $\beta=0.9$, $\epsilon=\max(\epsilon| N_{\textrm{clusters}}/N_p>\beta) $ and $\hat{\epsilon}=0$}
\State{Set convergence threshold $\gamma=0.01$}
\State{Set $t_f,n_t$ and time step size $\Delta t=t_f/n_t$ (default value $\Delta t=0.1$)}
%\State{Set $w=[2,2,1]$}
\While{$\mathbb{E}[||\hat{X}^n-\hat{Y}^n||^2_2]> \gamma$}
   \State{$\hat{X}^{k_0} \leftarrow \hat{X}^n$ }
    \State{$\hat{Y}^{k_0} \leftarrow \hat{Y}^n$}     
    \For {$j=0:2$}
        \State{$ \textrm{indX} \leftarrow \textrm{BallTree}(\hat{X}^{k_j},\epsilon)$}  
        \State{$ \textrm{indY} \leftarrow \textrm{BallTree}(\hat{Y}^{k_j},\epsilon)$}    
         \For{$i=1:N_p$}
             \State{$\bar{I}^{\hat{X}^{k_j}_i, \epsilon}\leftarrow \textrm{indX}[i]$}
            \State{$\bar{I}^{\hat{Y}^{k_j}_i, \epsilon}\leftarrow \textrm{indY}[i]$} 
            \State{Estimate $\mathbb{E}[\nabla_x c(\hat{X}^{k_j},\hat{Y}^{k_j})|\hat{X}^{k_j}_i]$ using Eq.~\eqref{eq:cond-linear-x}}
            \State{Estimate $\mathbb{E}[ \nabla_y c(\hat{X}^{k_j},\hat{Y}^{k_j})|\hat{Y}^{k_j}_i]$ using Eq.~\eqref{eq:cond-linear-y}}
            \State{Compute $k^{X}_{j+1,i}$ using Eqs.~\eqref{eq:st1-x}, \eqref{eq:st2-x}, \eqref{eq:st3-x}}
            \State{Compute $k^{Y}_{j+1,i}$ using Eqs.~\eqref{eq:st1-y}, \eqref{eq:st2-y}, \eqref{eq:st3-y}}
            \State{$\hat{X}^{k_{j+1}}_i=\hat{X}_i^n+k_{j+1,i}^X\Delta t/w[j]$}
            \State{$\hat{Y}^{k_{j+1}}_i=\hat{Y}_i^n+k_{j+1,i}^Y\Delta t/w[j]$}
         \EndFor
       \State{Compute $k^{X}_{4,i}$ using Eq.~\eqref{eq:st4-x}}
        \State{Compute $k^{Y}_{4,i}$ using Eq.~\eqref{eq:st4-y}} 
    \EndFor
    \State{$\hat{X}^{n+1}\leftarrow\hat{X}^n+(k_1^X+2k_2^X+2k_3^X+k_4^X)/6$}
    \State{$\hat{Y}^{n+1}\leftarrow \hat{Y}^n+(k_1^Y+2k_2^Y+2k_3^Y+k_4^Y)/6$}
     \State{Increment $n$}
  \EndWhile
%   \State \Return{} %{$(\hat{X}^n,\hat{Y}^n)$} 
\end{algorithmic}
\end{algorithm}

%\subsection{Further Numerical Results}
%\label{sec:further_results}

\section{Optimal Cutoff}
\label{sec:find_opt_eps}
\noindent We perform numerical experiments to investigate the optimal value of $\epsilon$ by considering the transport map between two normal distributions $\mu = \mathcal{N}(0,I)$ and  $\nu=\mathcal{N}(1,I)$. In particular, given $N_p$ particles of these two marginals, we run the $L^2$-OCD algorithm until convergence, for a range of $\epsilon$ in one and two dimensional probability space, and analyse our finding. In all experiments here, we consider $N_p\in\{100, 200, 400, 800, 1'600 \}$ and $\Delta t = 10^{-3}$, where maximum of $20'000$ steps are employed. Furthermore, both piecewise linear and piecewise constant estimation of conditional expectations are used to characterize $\epsilon$.
\\ \ \\
While the Monge map is a simple affine transformation in this setting, we use a numerical solution of the linear programming, noted here by EMD \cite{bonneel2011displacement}, to understand the quality of OCD results with respect to the EMD benchmark. To find the optimal value denoted by $\epsilon^*$, from several experiments of OCD with a range of $\epsilon$,  we compare the solution of OCD with EMD by computing the Wasserstein distance of their joint densities. Therefore, we construct the discrete joint density by concatenation $(X,Y)\sim \pi$ and compute $d^2(\pi^\mathrm{EMD},\pi^\mathrm{OCD})$. Furthermore, we study the number of clusters created for each OCD run, using Density-Based Spatial Clustering of Applications with Noise  (DBSCAN) method \cite{schubert2017dbscan}.
\\ \ \\
From Figs.~\ref{fig:err_np_eps_normal_no_cluster}-\ref{fig:err_np_eps_normal_no_cluster_linear}, it is clear that for small $\epsilon$, i.e. $\epsilon\rightarrow0$, the number of clusters does not differ much from the number of particles $N_p$. In this case, the initial particles remain frozen and do not see each other, hence the OCD algorithm returns the initial joint density (which is density of the product measure). However, for large values of $\epsilon$, the number of clusters decreases to a fixed value depending on $N_p$, dimension of $X$ and $Y$, while the Wasserstein distance of OCD from the solution of EMD increases exponentially (for piecewise constant approximation of conditional expectation). Note that we observe a decay in error for large $\epsilon$, if linear approximation of conditional expectation is employed. This can be explained by the fact that linear regression is indeed optimal if conditional expectation with respect to Gaussian measure is considered. Since here both marginals are Gaussians, we observe an improved performance of the piecewise linear approximation, for very large $\epsilon$.  Nevertheless, in all considered scenarios, the optimal parameter $\epsilon^*$ lies somewhere between two extremes of $\epsilon\to 0$ and $\epsilon \to \infty$.

\begin{figure}
  	\centering
   \begin{tabular}{cc}
   \includegraphics[scale=0.5]{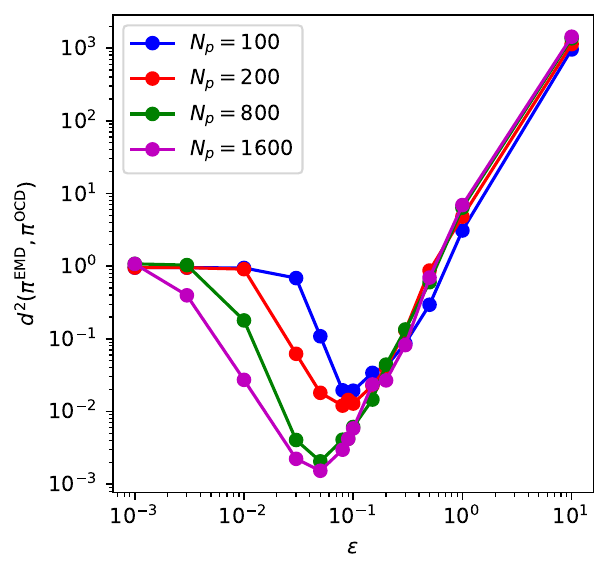}
   &
\includegraphics[scale=0.5]{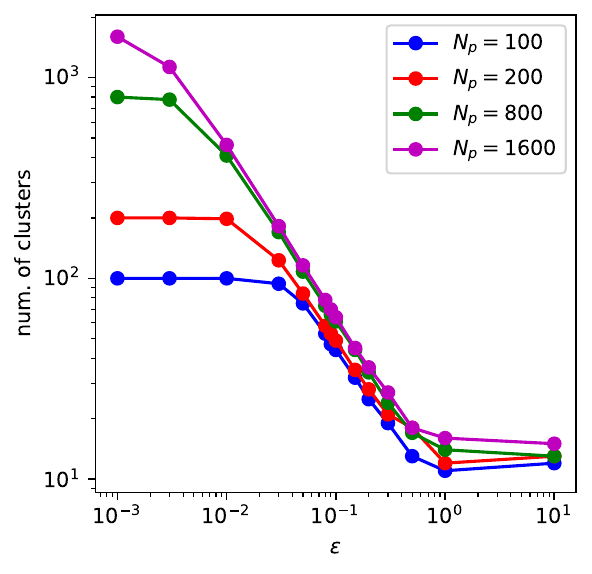}
\\
   \includegraphics[scale=0.5]{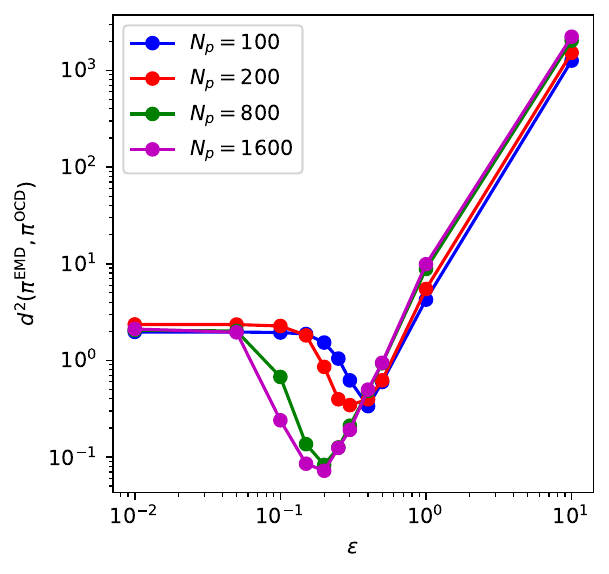}
   &
\includegraphics[scale=0.5]{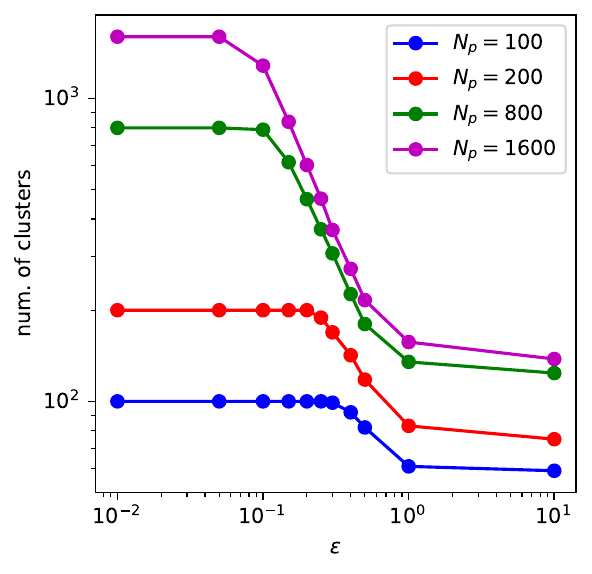}
   \end{tabular}
   \caption{Error in finding the optimal map between marginals $\mu=\mathcal{N}( 0, I)$ and $\nu=\mathcal{N}( 1, I)$ as well as the number of clusters in 1-dimensional space, i.e. $\dim(X)=1$ (top) and 2-dimensional space, i.e. $\dim(X)=2$ (bottom). The piecewise constant approximation is used to estimate the conditional expectation.}
      \label{fig:err_np_eps_normal_no_cluster}
\end{figure}
%%%%%%%%%%%%%%

\begin{figure}
  	\centering
   \begin{tabular}{cc}
   \includegraphics[scale=0.5]{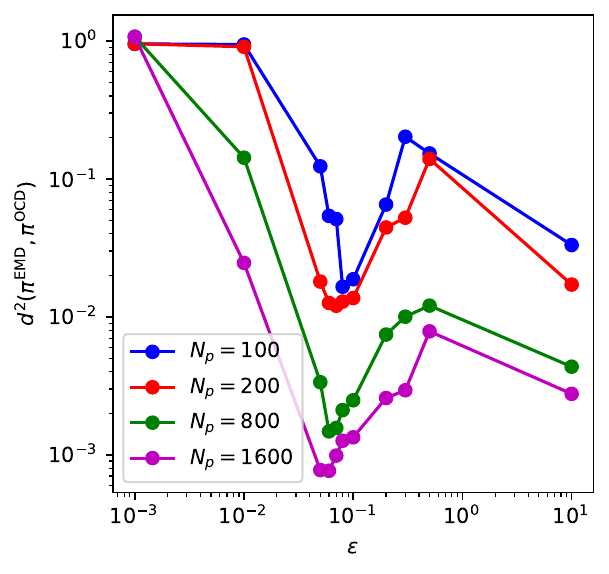}
   &
\includegraphics[scale=0.5]{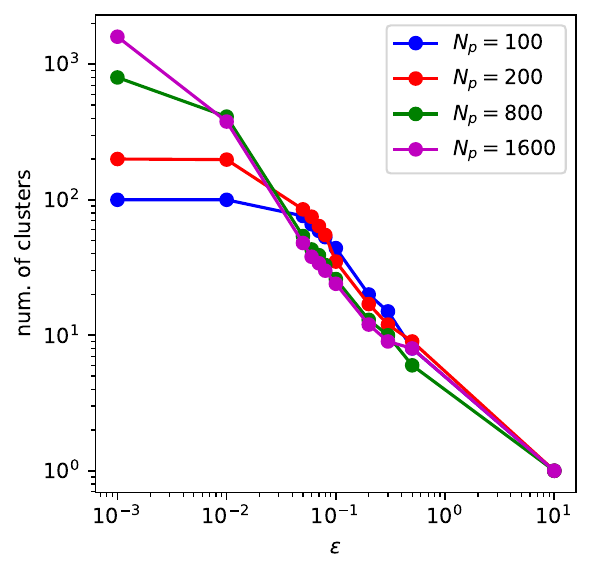}
\\
\includegraphics[scale=0.5]{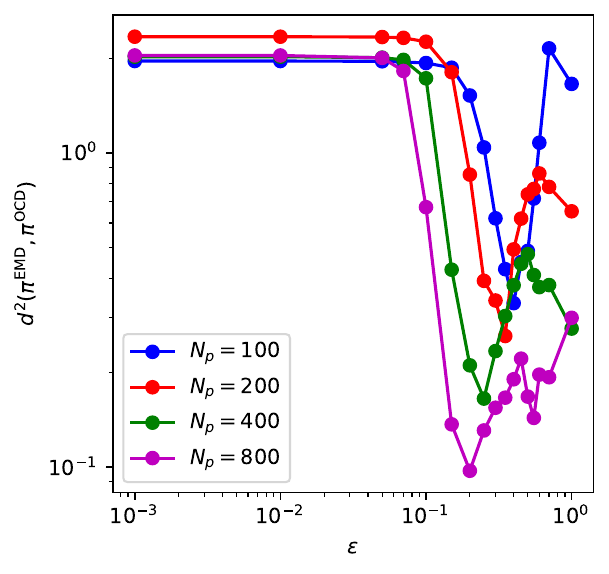}
   &
\includegraphics[scale=0.5]{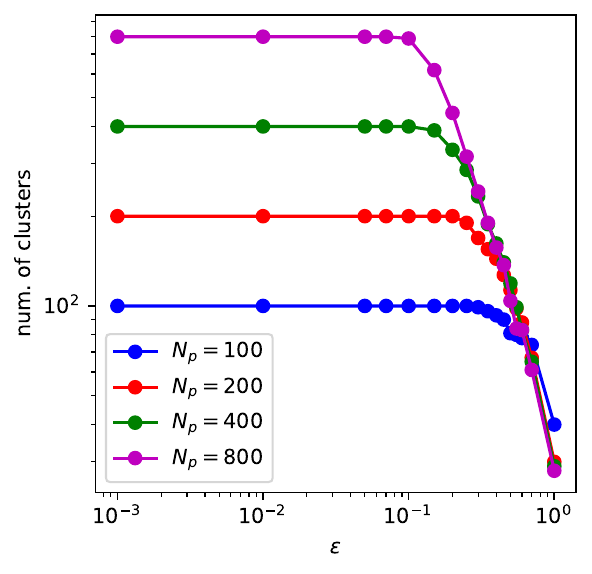}
   \end{tabular}
   \caption{Error in finding the optimal map between marginals $\mu=\mathcal{N}( 0, I)$ and $\nu=\mathcal{N}( 1, I)$, as well as the number of clusters in 1-dimensional space, i.e. $\dim(X)=1$ (top) and 2-dimensional space, i.e. $\dim(X)=2$ (bottom). The piecewise linear approximation is considered for the estimation of conditional expectation.}
      \label{fig:err_np_eps_normal_no_cluster_linear}
\end{figure}

\ \\
In \cref{fig:opt_eps_vs_Np}, we plot the value of optimal parameter $\epsilon^*$ against $N_p$. Interestingly, in both cases of $\dim(X)=\dim(Y)=1$ and $2$, we observe that the optimal value scales almost $\epsilon^*\sim N_p^{-1/5}$ up to statistical noise. Since our proposed approach in computing the conditional expectation (piecewise constant) can be seen as a kernel density estimator with a step function as the kernel, the scaling of the optimal bandwidth $N_p^{-1/5}$ is expected. 
%Although this is an interesting observation, it does not help us with finding the optimal $\epsilon$.
\\ \ \\
Another insight can be gained by considering number of clusters as a guiding proxy for $\epsilon^*$. We consider two proxies that could relate $\epsilon^*$ to the number of clusters:
\begin{enumerate}
    \item $\epsilon_\mathrm{crit}$: the critical point of number of clusters versus $\epsilon$.
    \item $\epsilon_\mathrm{max}$: the maximum $\epsilon$ with $N_\mathrm{clusters}/N_p > \beta$, with $\beta$ is a user-defined tolerance.
\end{enumerate}
Numerical results presented in Figs.~\ref{fig:crit_max_versus_opteps}-\ref{fig:crit_max_versus_opteps_lin} motivate that both proxies lead to reasonable approximations of $\epsilon^*$. While there is no clear consensus on all the considered numerical tests, we note that the optimal $\epsilon^*$ is typically close to one of the mentioned proxies. Therefore, the user can preform a grid search around either $\epsilon_\text{crit}$ or $\epsilon_\text{max}$.
While presented proxies provide some guidance, we emphasize that, in general, the optimal $\epsilon^*$ should be treated as a hyper-parameter, and to be set by the user with try and error.

\begin{figure}
  	\centering
   \begin{tabular}{cc}
   \includegraphics[scale=0.5]{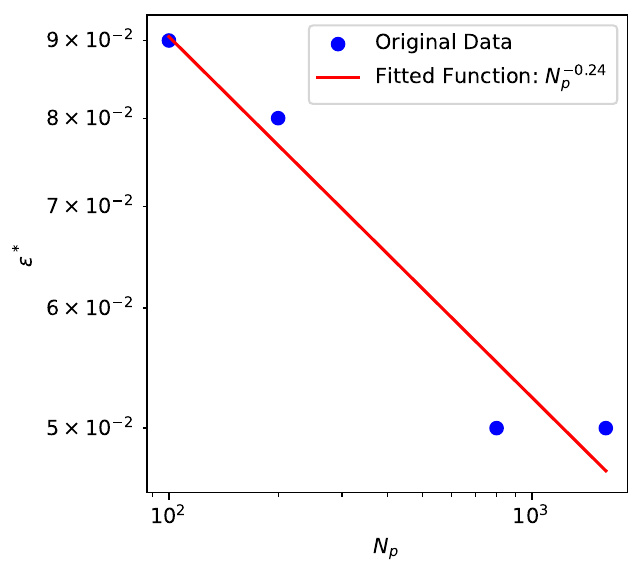}
   &
   \includegraphics[scale=0.5]{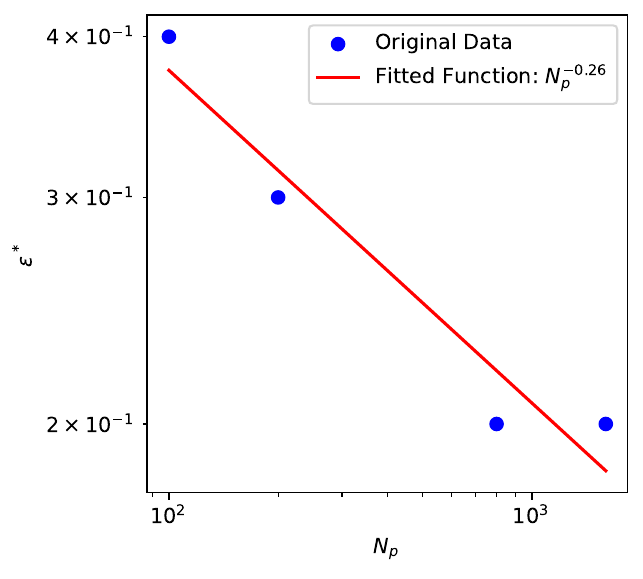}
 %  \\
%   \includegraphics[scale=0.5]{Figures/norm_m0v1_norm_m1v1/1d_optimaleps_Np_norm0-1_norm1-1_lin.pdf}
 %  &
%\includegraphics[scale=0.5]{Figures/norm_m0v1_norm_m1v1/2d_optimaleps_Np_norm0-1_norm1-1_lin.pdf}
   \end{tabular}
   \caption{Optimal cutoff denoted by $\epsilon^*$ as a function of $N_p$, given two marginals $\mu=\mathcal{N}( 0, I)$ and $\nu=\mathcal{N}( 1, I)$ over 1-dimensional space, i.e. $\dim(X)=1$ (left) and 2-dimensional space, i.e. $\dim(X)=2$ (right)~. The piecewise constant approximation is employed to estimate the conditional expectation.}
   \label{fig:opt_eps_vs_Np}
\end{figure}

\begin{figure}
  	\centering
   \begin{tabular}{cc}
      \includegraphics[scale=0.5]{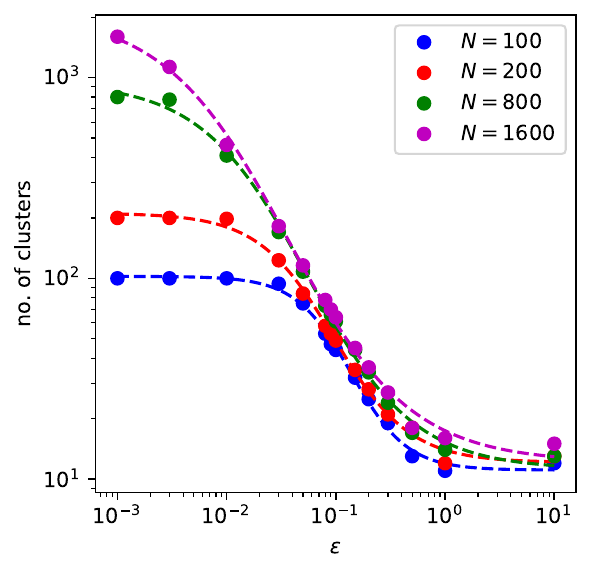}
      &
   \includegraphics[scale=0.5]{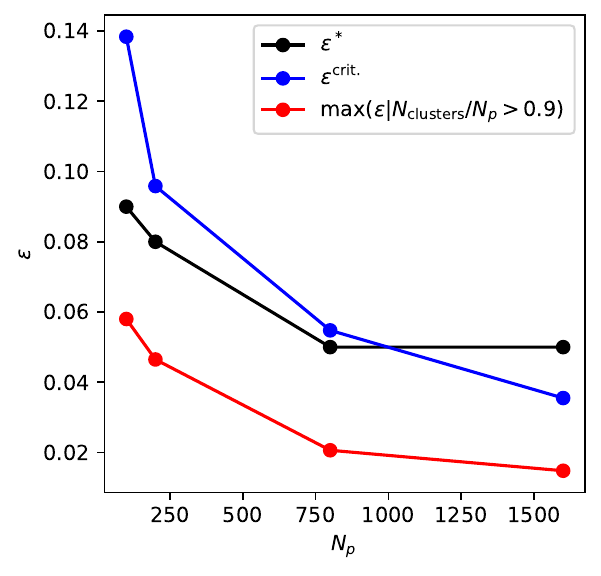}
\\
      \includegraphics[scale=0.5]{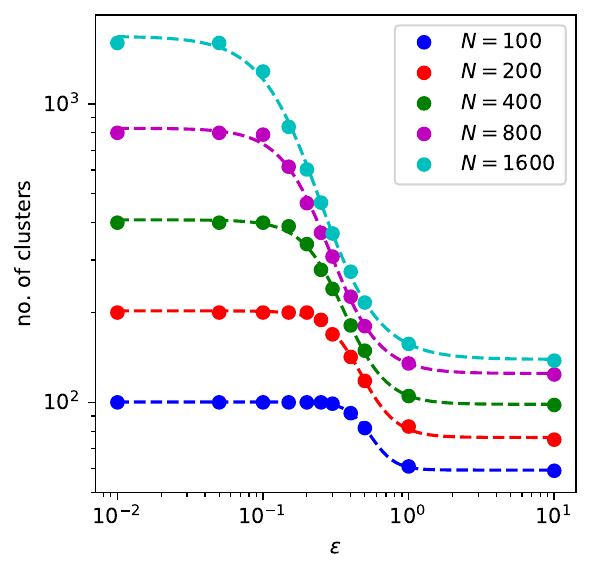}
      &
   \includegraphics[scale=0.5]{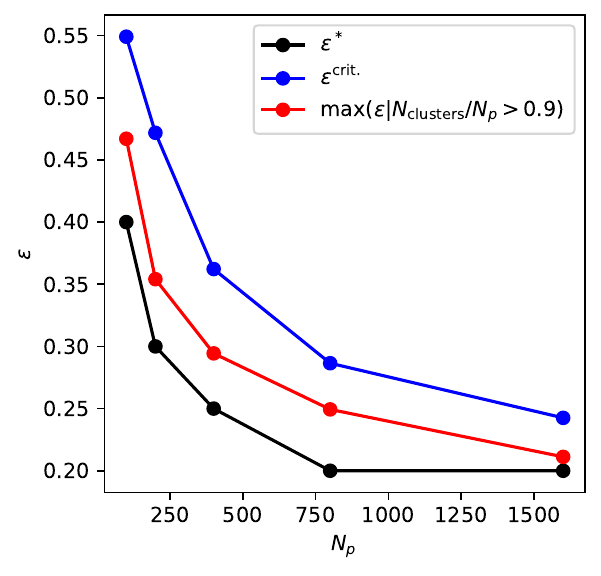}
   \end{tabular}
   \caption{Comparing the critical point of the number of cluster versus $\epsilon$ in 1-dimensional space (top) and two-dimensional space (bottom), where $\mu=\mathcal{N}( 0, I)$ and $\nu=\mathcal{N}( 1, I)$. The piecewise constant approximation is employed for the estimation of conditional expectation.}
   \label{fig:crit_max_versus_opteps}
\end{figure}

\begin{figure}
  	\centering
   \begin{tabular}{cc}
      \includegraphics[scale=0.5]{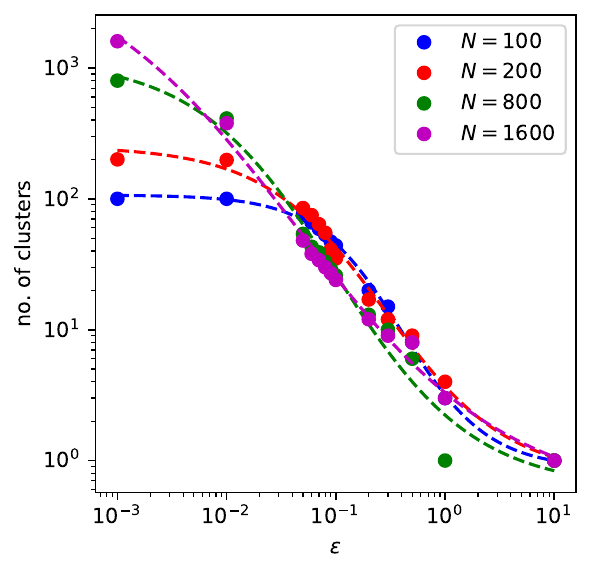}
      &
   \includegraphics[scale=0.5]{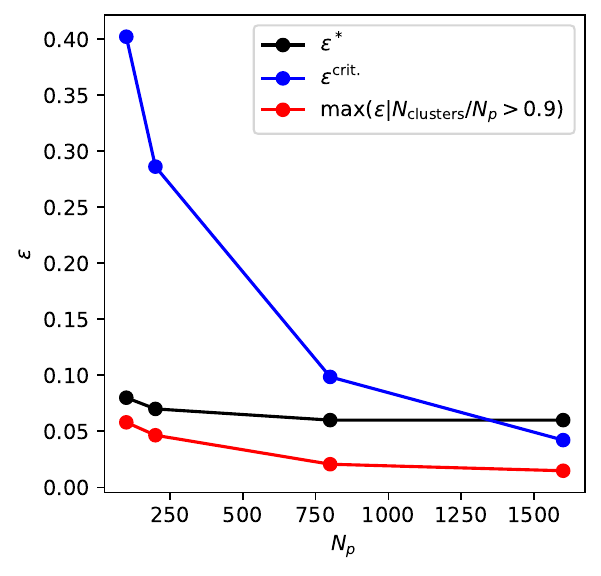}
   \\
   \includegraphics[scale=0.5]{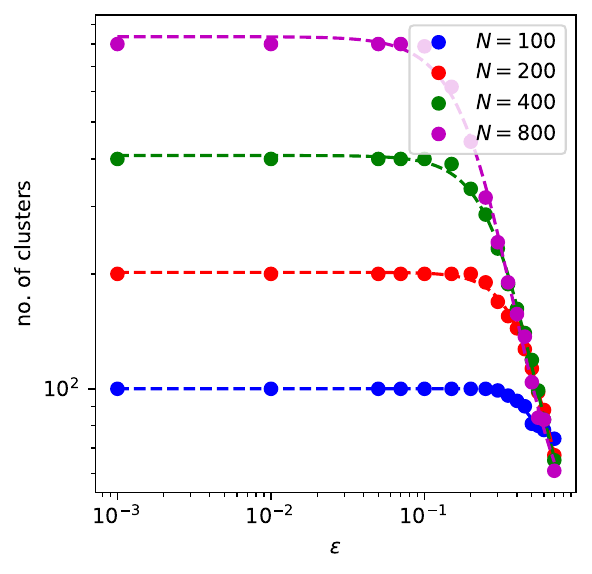}
      &
   \includegraphics[scale=0.5]{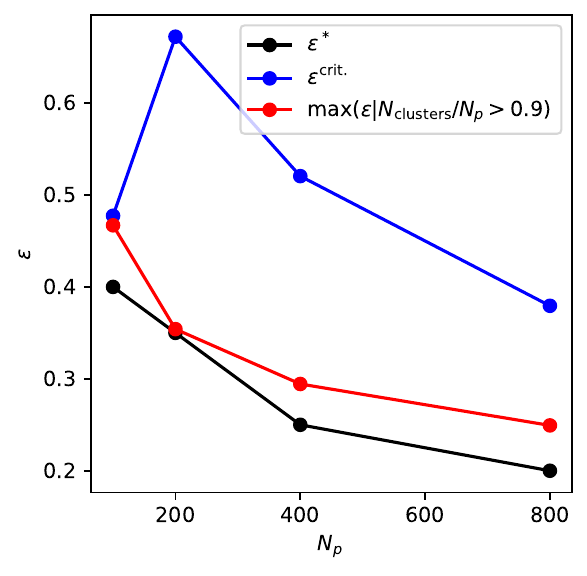}
   \end{tabular}
   \caption{Comparing the critical point of the number of cluster versus $\epsilon$ in 1-dimensional space (top) and 2-dimensional space (bottom), where $\mu=\mathcal{N}( 0, I)$ and $\nu=\mathcal{N}( 1, I)$.  The piecewise linear approximation is considered for the estimation of conditional expectation.}
   \label{fig:crit_max_versus_opteps_lin}
\end{figure}

\section{Gallery of Density Estimation}
\label{sec:further_results}
%\noindent {\bf .}
\noindent To further assess the quality, robustness, and computational cost of OCD, with respect to the state-of-the-art, we compare the learned map between Gaussian and a target distribution using OCD (with $L^2$-cost) and ATM. As target distributions, we consider Swiss roll, banana, and funnel, as shown in Figs.~\ref{fig:test_swiss_roll}-\ref{fig:test_funnel}. In all these experiments, we varied the hyper-parameters of OCD and ATM, and checked how the learned map is affected. We employed $10^4$ samples of the target density in the learning phase, and generated $10^6$ samples of the fitted density to produce the histograms. We solve the OCD equations using RK4 solver and use piecewise linear approximation for the conditional expectation, see \cref{alg:RK4_OCD}. In case of ATM, we used an additional of $500$ samples for validation and performed $5$ folded search. In \cref{table:cost_swiss_roll}, we report the computational cost of OCD and ATM for the Swiss roll example. For reference, the computational cost of EDM is also reported.
%\label{sec:gallery_density}
\begin{figure}
  	\centering  
    {\tiny

\begin{tabular}{ccc}
& \includegraphics[scale=0.45]{Figures/SwissRoll/Y_swissroll_1000000_noise0.5.pdf}
\\
& Exact
\\
\includegraphics[scale=0.45]{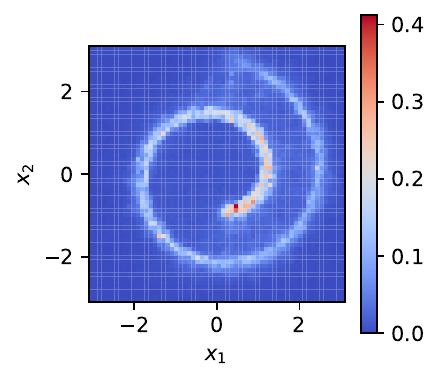}
&   \includegraphics[scale=0.45]{Figures/SwissRoll/OCDRK4_lin_eps0.03_outputY_SwissGauss_10000_dt0.1_noise0.5.pdf}
&
\includegraphics[scale=0.45]{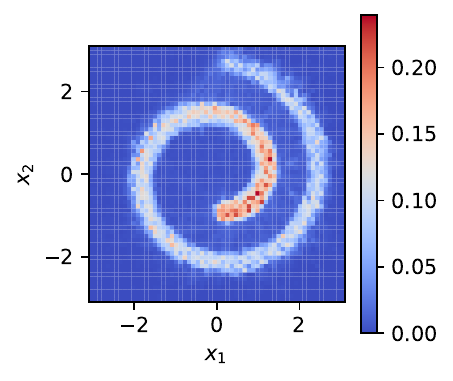}
\\
(a) OCD: $\epsilon=0.025, \Delta t=0.1$
&
(b) OCD: $\epsilon=0.03, \Delta t=0.1$  
&
(c) OCD: $\epsilon=0.035, \Delta t=0.1$
\\
\includegraphics[scale=0.45]{Figures/SwissRoll/OCDRK4_lin_eps0.03_outputY_SwissGauss_10000_dt0.1_noise0.5.pdf}
&
\includegraphics[scale=0.45]{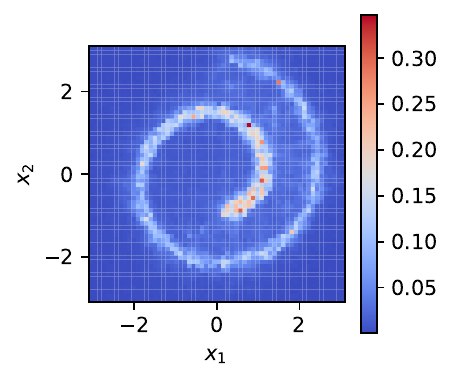}
&
\includegraphics[scale=0.45]{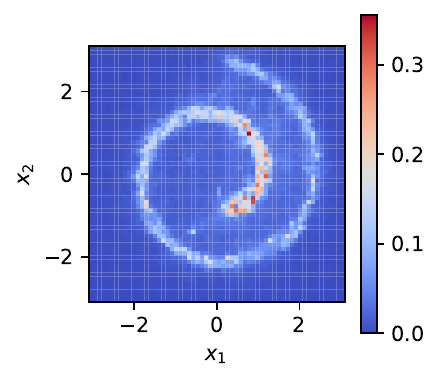}
\\
(d) OCD: $\epsilon=0.03, \Delta t=0.1$
&
(e) OCD: $\epsilon=0.03, \Delta t=0.01$  
&
(f) OCD: $\epsilon=0.03, \Delta t=0.001$
\\
\includegraphics[scale=0.45]{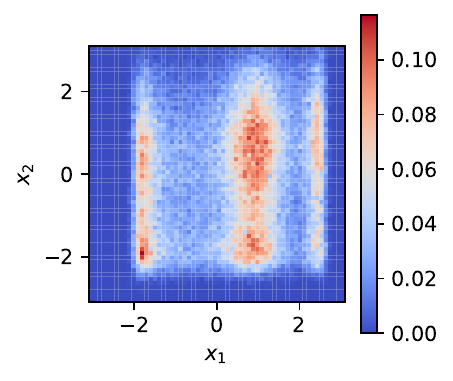}
&
\includegraphics[scale=0.45]{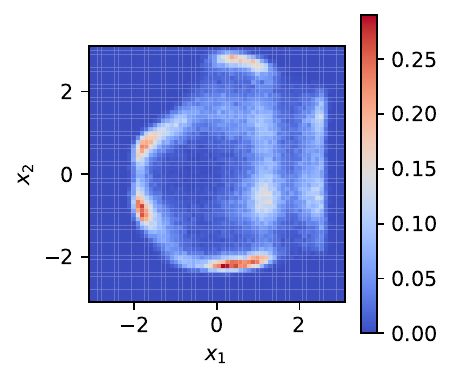}
&
\includegraphics[scale=0.45]{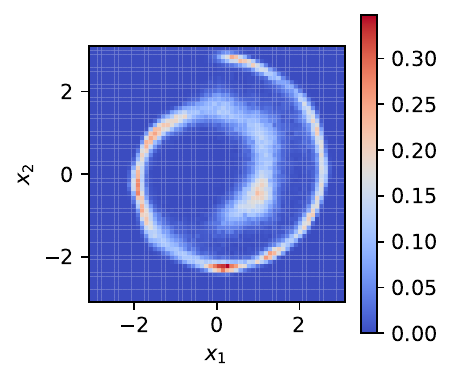}
\\
 (g) ATM (10 terms) & (h)  ATM (20 terms) &  (i) ATM (30 terms)
   \end{tabular}
   }
   \caption{Estimating distribution of  the Swiss roll as the target density from Gaussian source using $L^2$-OCD (a-f) and ATM (g-i).}
   \label{fig:test_swiss_roll}
\end{figure}

\begin{table}
\begin{tabular}{lccc}
 \textbf{Method} & \textbf{Hyper parameters} & \textbf{Ex. time [s]} & \textbf{Memory [MB]}  \\
 \hline
EMD &   & 22.5 &  2400.75\\
\hline
OCD &  $\epsilon=0.025$,  $\Delta t=0.1$ & $91.6$ & $30.9$   \\
& $\epsilon=0.03$, $\Delta t=0.1$ & $84.4$ & $37.1$  
\\
& $\epsilon=0.035$, $\Delta t=0.1$ &  $84.5$ & $37.2$
\\
& $\epsilon=0.03$, $\Delta t=0.01$ &  $743.2$ & $36.2$
\\
& $\epsilon=0.03$, $\Delta t=0.001$ &  $8421.2$ & $40.4$
\\
\hline
ATM & 10 terms &  $31.29$ & $-^*$
\\
ATM & 20 terms &  $207.3$ & $-^*$
\\
ATM & 30 terms & $3542.1$ & $-^*$
\end{tabular}
\caption{Execution time of EMD, OCD, and ATM, along with memory consumption of EMD and OCD  in estimating Swiss roll distribution. \\
${\ }^*$ The deployed code of ATM (\url{https://github.com/baptistar/ATM}) is implemented in Matlab, and we were not able to record the memory consumption accurately on our local computer which uses Linux as the operating system.}
\label{table:cost_swiss_roll}
\end{table}
\begin{figure}
  	\centering
    {\tiny
   \begin{tabular}{ccc}
   & \includegraphics[scale=0.45]{Figures/banana/exactdensity_banana.pdf}
   \\
   & Exact 
   \\
   \includegraphics[scale=0.45]{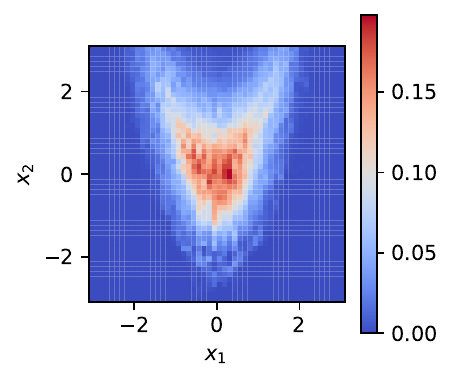}
   &
\includegraphics[scale=0.45]{Figures/banana/OCDRK4_lin_eps0.0656_outputY_banana_20000_dt0.1.pdf}
& 
\includegraphics[scale=0.45]{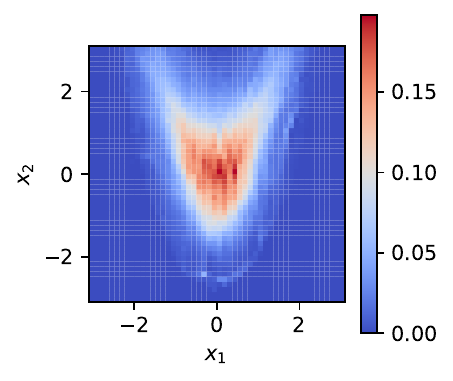}
\\
(a) OCD: $\eps=0.0536$, $\Delta t=0.1$
&
(b) OCD: $\eps=0.0656$, $\Delta t=0.1$
 &
(c) OCD: $\eps=0.0695$, $\Delta t=0.1$
 %%%%%%
 \\
 %%%%%
\includegraphics[scale=0.45]{Figures/banana/OCDRK4_lin_eps0.0656_outputY_banana_20000_dt0.1.pdf} & \includegraphics[scale=0.45]{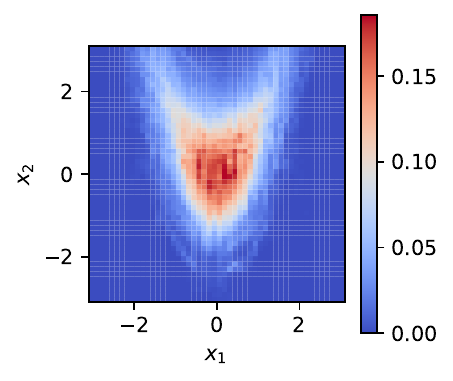}
& 
\includegraphics[scale=0.45]{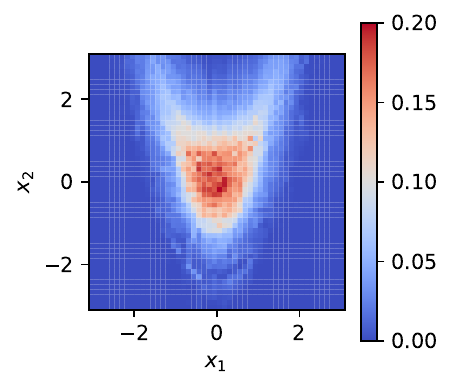}
\\
(d) OCD: $\eps=0.0656$, $\Delta t=0.1$
&
(e) OCD: $\eps=0.0656$, $\Delta t=0.01$
 &
(f) OCD: $\eps=0.0656$, $\Delta t=0.001$
 \\
\includegraphics[scale=0.45]{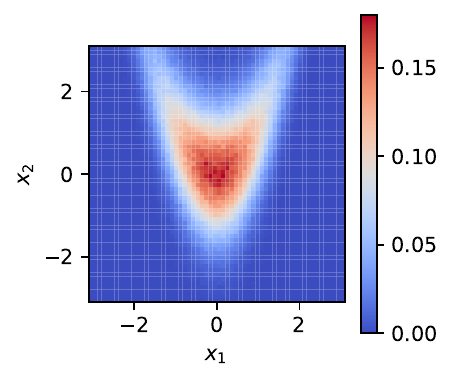}
&
\includegraphics[scale=0.45]{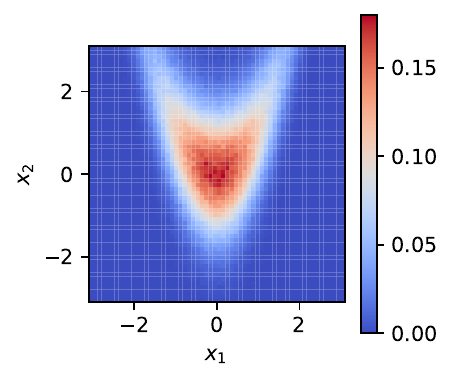}
&
\includegraphics[scale=0.45]{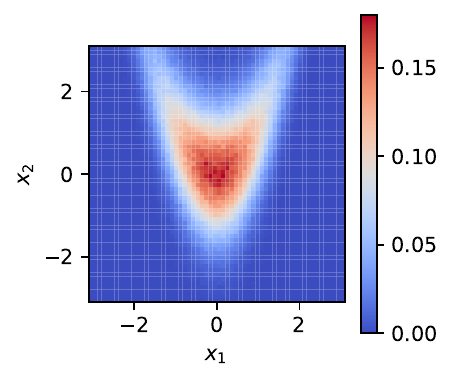}
\\
(g) ATM (10 terms)
&
(h) ATM (20 terms)
&
(i) ATM (30 terms)
   \end{tabular}
   }
   \caption{Estimating distribution of  the banana as the target density from Gaussian source, using $L^2$-OCD (a-f) and ATM (g-i).}
   \label{fig:test_banana}
\end{figure}

\begin{figure}
  	\centering
    {\tiny
   \begin{tabular}{cccc}

& \includegraphics[scale=0.45]{Figures/Funnel/exactdensity_funnel.pdf}
   \\
   & Exact
   
\\
   \includegraphics[scale=0.45]{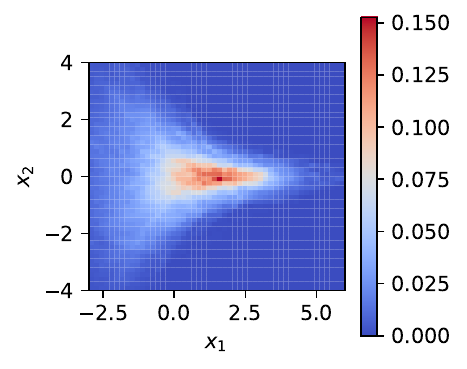}
   &
\includegraphics[scale=0.45]{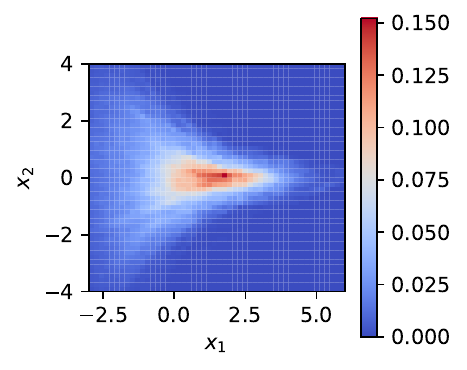}
    &
    \includegraphics[scale=0.45]{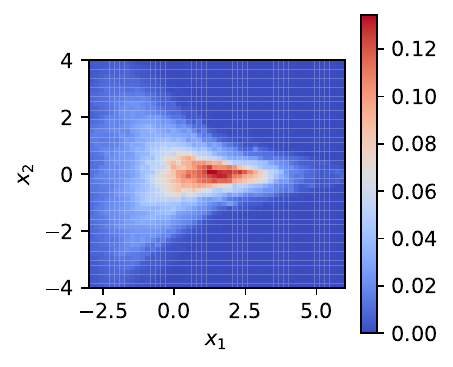}
   \\
    OCD: $\eps=0.088$ $\Delta t= 0.1$
    &
    OCD: $\eps=0.09$ $\Delta t= 0.1$
    &
     OCD: $\eps=0.092$ $\Delta t= 0.1$
    \\
    \includegraphics[scale=0.45]{Figures/Funnel/OCDRK4_lin_eps0.09_outputY_funnel_10000_dt0.1.pdf}
    &
    \includegraphics[scale=0.45]{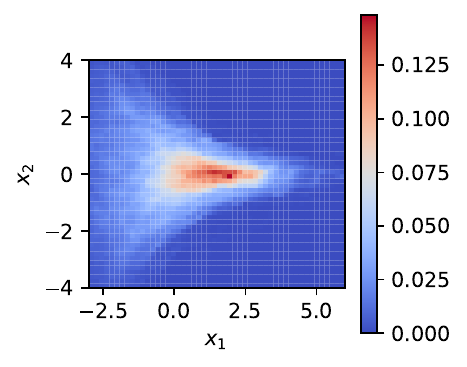}
    &
    \includegraphics[scale=0.45]{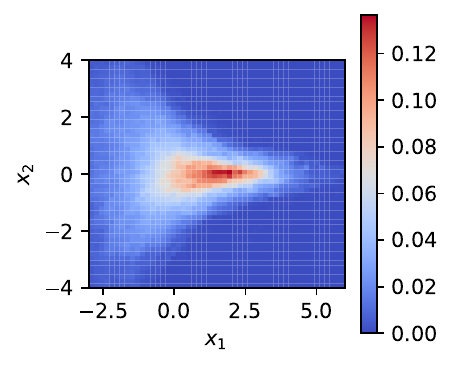}
    \\
OCD: $\eps=0.09$, $\Delta t=0.1$
&
 OCD: $\eps= 0.09$, $\Delta t=0.01$
 &
 OCD: $\eps=0.09$, $\Delta t=0.001$
 \\
\includegraphics[scale=0.45]{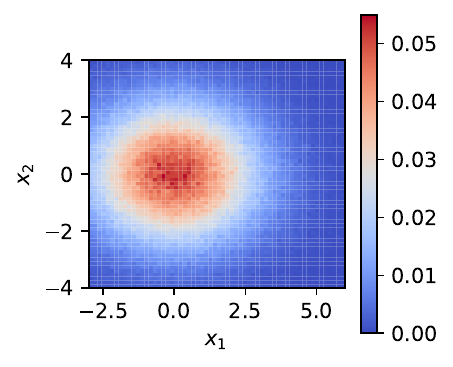}
& 
\includegraphics[scale=0.45]{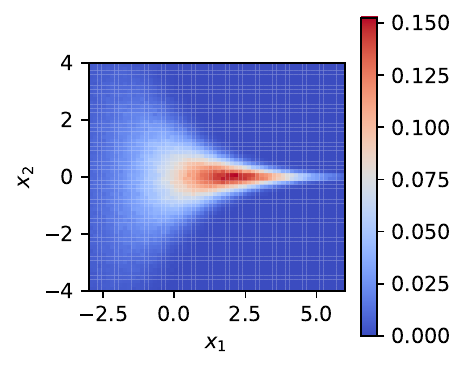}
&
\includegraphics[scale=0.45]{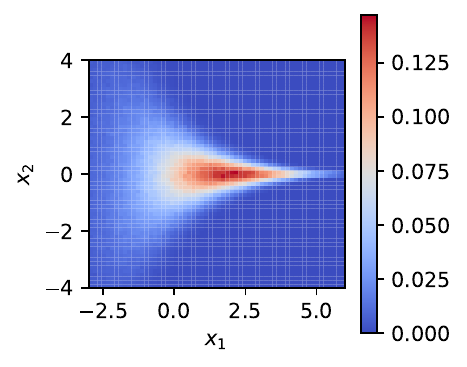}
\\
ATM (10 terms)
&
ATM (15 terms)
&
ATM (20 terms)
   \end{tabular}
   }
\caption{Estimating distribution of  the funnel as the target density from Gaussian source, using $L^2$-OCD (a-f) and ATM (g-i).}
   \label{fig:test_funnel}
\end{figure}

%\FloatBarrier
%\newpage

\section{Benchmarking OCD against Neural Optimal Transport}
\label{sec:benchmark_NOT}
In this section, we further test OCD against another particle method called Neural Optimal Transport \cite{korotin2023neural}. In this method, backpropagation is used to compute the map between marginals. We consider two normal distributions $X,Y\sim N(0_n,I_n)$ in $n$-dimensional setting. We compare the solution of the discrete samples from each method against EMD and investigate performance in terms of execution time and error.  As shown in Fig.~\ref{fig:OCD_vs_NOT}, for a fixed dimension of $n=2$, using 200 epochs for NOT and 100 iterations for OCD, the proposed OCD method outperforms NOT for a wide of number of particles $N_p$, while achieving more accurate solution. However, we expect NOT to reach the performance of OCD at very large $N_p\rightarrow \infty$. In Fig.~\ref{fig:OCD_vs_NOT_n}, we further compare NOT to OCD in terms of dimension $n=\dim(X)$. Clearly, the OCD outperforms NOT in low dimensional problems. We expect a turnover in performance in high dimensions.

\begin{figure}[H]
  	\centering
   \begin{tabular}{cc}
   \includegraphics[scale=0.65]{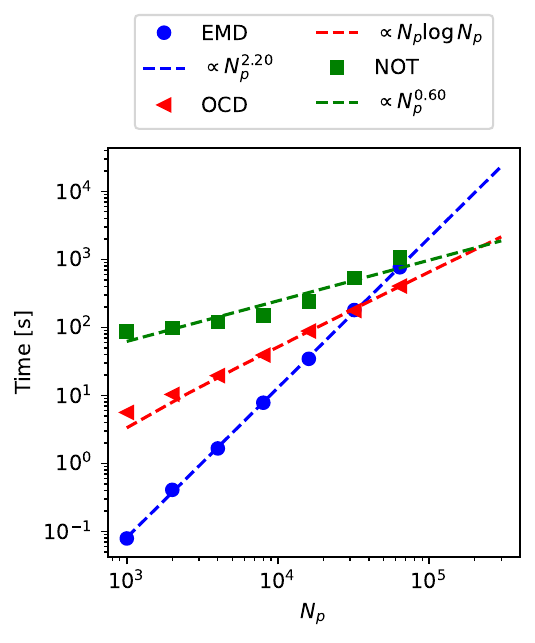}
   &
   \includegraphics[scale=0.65]{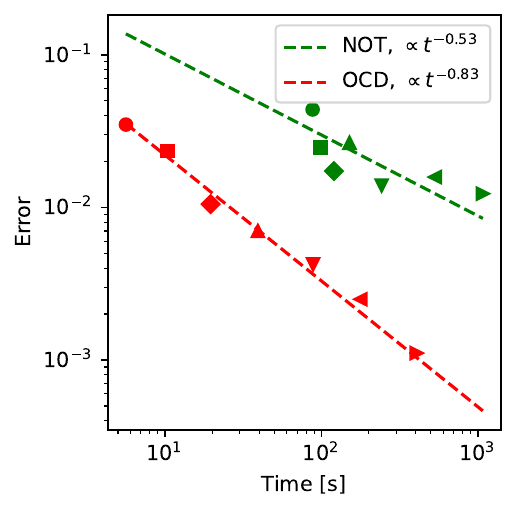}
   \end{tabular}
   \caption{Scaling of execution time against number of particles $N_p$ (left) and a comparison of NOT against OCD in terms of $L_1$ error in the  estimation of Wasserstein distance versus computational time (right).}
   \label{fig:OCD_vs_NOT}
\end{figure}

\begin{figure}[H]
  	\centering
   \begin{tabular}{cc}
   \includegraphics[scale=0.65]{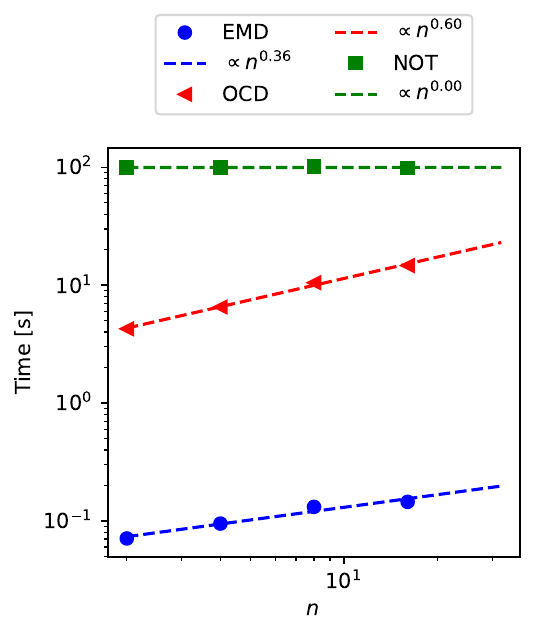}
   &
   \includegraphics[scale=0.65]{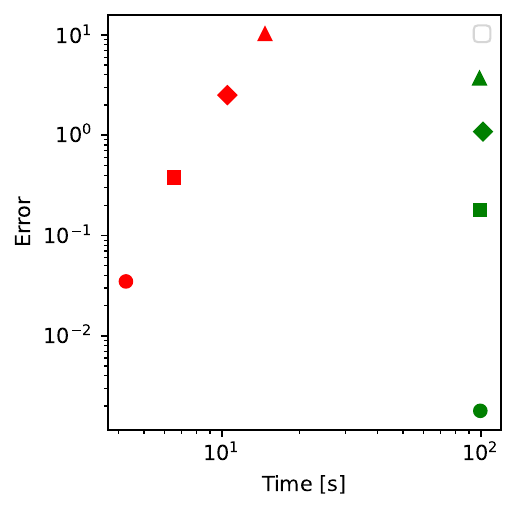}
   \end{tabular}
   \caption{Scaling of execution time against dimension $n=\dim(X)$ (left) and a comparison of NOT against OCD in terms of $L_1$ error in the  estimation of Wasserstein distance versus computational time for the corresponding dimensions (right).}
   \label{fig:OCD_vs_NOT_n}
\end{figure}

%\FloatBarrier

\section{OCD with $L^p$-cost function}
Here, we demonstrate the application of the developed $L^p$-OCD method for convex cost functions other than quadratic one considered in the main text. As an example, let us consider $L^p$ function $c_p(x,y)=||x-y||_p^p$ as the cost function for the optimal map problem with value $p=4$. 
\\ \ \\
As the first example, we consider $n=1,2$ and generate $10^4$ samples from $X\sim \mathcal{N}(0,I)$ and $Y\sim \mathcal{N}(0,0.5I)$. As shown in Fig.~\ref{fig:OCD_other_cost}, both of these cases converge to the sub-optimal solution monotonically in time.

\begin{figure}[H]
  	\centering
   \begin{tabular}{cc}
   \includegraphics[scale=0.65]{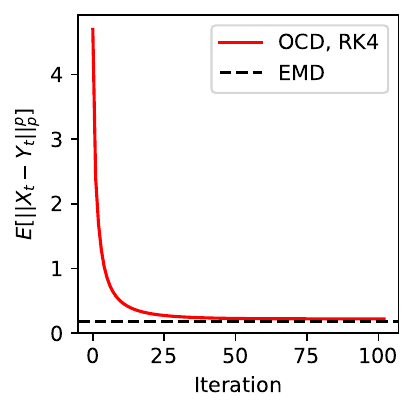}
   &
\includegraphics[scale=0.65]{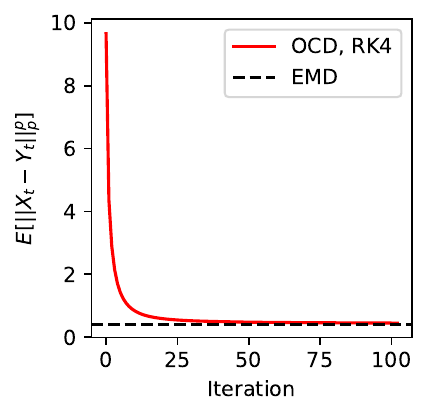}
\\
$n=1$ & $n=2$
   \end{tabular}
   \caption{Estimate of $L^p$-Wasserstein distance in the OCD dynamics with $p=4$ till steady-state between two normally distributed random variables $X$ and $Y$ with $\Delta t=0.01$.}
\label{fig:OCD_other_cost}
\end{figure}

\noindent \sloppy Second, we consider 
the optimal map problem between the normal distribution $X\sim N(0,I)$ and its push-forward softmax map
$Y=T(X)$, where 
$T_i=\nabla_{x_i} \log \left(\exp(x_1)+\exp(x_2)\right)$ in $\mathbb R^2$. Starting from $10^5$ independent random samples of both marginals, we evolve the pair $(X_t,Y_t)$ by RK4-OCD with the linear approximation to the conditional expectation, $\Delta t=3\times 10^{-3}$ and $\epsilon=2\times10^{-3}$. As shown in Fig.~\ref{fig:OCD_lp_softmax}, $\mathbb{E}\lVert X_t-Y_t \rVert_p^p$ converges towards the  $L^p$-Wasserstein distance.

\begin{figure}[H]
  	\centering
   \begin{tabular}{c}
   \includegraphics[scale=0.65]{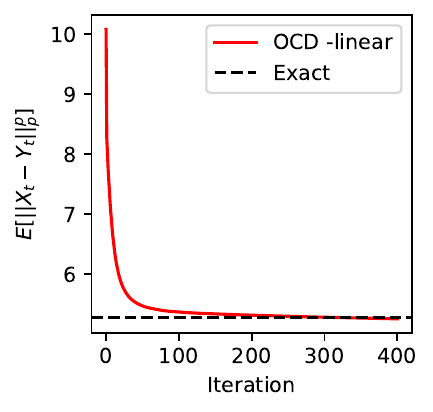}
   \end{tabular}
   \caption{Estimate of $L^p$-Wasserstein distance in the OCD dynamics with $p=4$ till steady-state between normally distribution $X$ and softmax map $Y$ with $\epsilon=2\times 10^{-3}$ and $\Delta t=3 \times 10^{-3}$.}
\label{fig:OCD_lp_softmax}
\end{figure}

\section{$10$-dimensional multivariate normal distributions}

As an example in the high-dimensional setting, here we test the proposed OCD in estimating the optimal map between two multivariate normal distributions $\mu=\mathcal{N}(0,\Sigma_\mu)$ and $\nu=\mathcal{N}(0,\Sigma_\nu)$ in $\mathbb R^n$ with $n=10$ where
\begin{flalign}
&\Sigma_\mu = I + L_\mu^TL_\mu
\ \ \  
\Sigma_\nu = I+ L_\nu^TL_\nu
\label{eq:cov_mult-variate}
\\
&(L_\mu)_{i,j} = 
\left\{\begin{matrix}
\frac{10}{(i+j)^2} \ \ \ \forall j < i  \\
0 \ \   \textrm{otherwise}
\end{matrix}\right.
\\
&(L_\nu)_{i,j} = 
\left\{\begin{matrix}
\cos\left(2\pi(i+j)/n\right) \ \ \ \forall j < i
 \\
0 \ \   \textrm{otherwise}
\end{matrix}\right.
\end{flalign}
We generate $10^4$ independent samples for $\mu$ and $\nu$ and estimate the optimal map between them using OCD with $\epsilon=1.88$ and $\Delta t = 0.05$  against the solution obtained by EMD. As shown in Fig.~\ref{fig:OCD_10dim_multivariate}, OCD provides an accurate estimate for the $L^2$ Wasserstein distance.

\begin{figure}[H]
  	\centering
   \begin{tabular}{c}
   \includegraphics[scale=0.65]{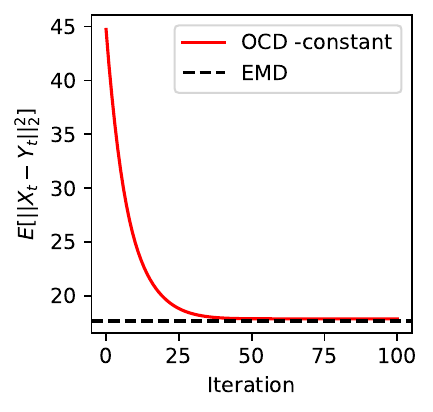}
   \end{tabular}
   \caption{Estimate of $L^2$-Wasserstein distance in the OCD dynamics with piecewise constant estimator of the conditional expectation till steady-state between multi-variate 10-dimensional normal distributions $\mu$ and $\nu$ with covariance matrices \eqref{eq:cov_mult-variate} against the solution obtained with EMD.}
\label{fig:OCD_10dim_multivariate}
\end{figure}

%\bibliographystyle{siam}
%\bibliography{refs}

\end{document}